\newtheorem{theorem}{Theorem}[section]
\newtheorem{definition}{Definition}[section]
\newtheorem{proposition}[theorem]{Proposition}
\newtheorem{corollary}[theorem]{Corollary}
\newtheorem{remark}[theorem]{Remark}
\newcommand{\R}{{\mathbb R}}
\newcommand{\wt}{\,{\mbox{\tt:}\:}}
\newcommand{\qed}{\hfill ~$\square$\bigskip}
\title{On Equistable, Split, CIS, and Related Classes of Graphs}
\author{
Endre Boros and Vladimir Gurvich\\
\small MSIS Department and RUTCOR, Rutgers University, New Jersey, USA\\
\small 100 Rockafeller Rd, Piscataway NJ 08854, USA\\
\small \texttt{\{Endre.Boros,Vladimir.Gurvich\}@rutgers.edu}\\
\and
Martin Milani\v c\\
\small University of Primorska, UP IAM, Muzejski trg 2, SI6000 Koper, Slovenia\\
\small University of Primorska, UP FAMNIT, Glagolja\v ska 8, SI6000 Koper, Slovenia\\
\small \texttt{martin.milanic@upr.si}
}
\date{}
\begin{document}

\maketitle

\begin{abstract}
We consider several graphs classes defined in terms of conditions on cliques and stable sets, including
CIS, split, equistable, and other related classes.
We pursue a systematic study of the relations between them.
As part of this study, we introduce two generalizations of CIS graphs, obtain a new characterization of split graphs, and
a characterization of CIS line graphs.
\end{abstract}

\noindent
{\bf Keywords:} clique, stable set, CIS graph, equistable graph, split graph, triangle condition, general partition graph,
edge simplicial graph, upper bound graph, normal graph\\

\noindent
{\bf AMS subject classification (2010)}: 05C69, 05C17

\section{Introduction}

{ Many graph classes can be defined by imposing conditions that must be satisfied by some or all cliques
and/or by some or all stable sets of the graph. Examples of such conditions include:
the existence of a partition of the graph's vertex set
into a specified number of cliques and stable sets~\cite{FH77,Br96,EkimGimbel},
the existence of a stable set the deletion of which results in a graph satisfying certain properties~\cite{BLSz,HC07, BHLL,YY,BDLS,BBKNP},
the existence of a linear weight function on the vertices separating the stable sets
(or the maximal stable sets) from all other vertex subsets~\cite{CH77,MP,Pay},
restrictions on the relations between the sizes of maximal stable sets~\cite{Plummer},
and restrictions related to the intersections between cliques
and stable sets~\cite{Hoang,ABG06,Berge}.}

In particular, a graph is said to be {\it CIS} if every maximal clique $C$ and every maximal stable set $S$ in $G$ intersect, that is, $C\cap S\neq\emptyset$,
and {\it almost CIS} if this condition holds, except for a unique pair.
CIS graphs were studied in a series of papers~\cite{ABG06,ABG10,BGZ09,BGM13,Gur09}; see also~\cite{Zang} for some earlier references.
A graph $G = (V,E)$ is said to be {\em split} if it admits a {\em split partition},
that is, a pair $(C,S)$ such that $C$ is a clique in $G$, $S$ is a stable set in $G$, $C\cup S = V$ and $C\cap S = \emptyset$.
Split graphs were introduced by F\"oldes and Hammer in~\cite{FH77}, where several characterizations were also given.
The almost CIS graphs conicide with the split graphs with a unique split partition.
This was conjectured by Boros et al.~\cite{BGZ09} and proved by Wu et al.~\cite{WZZ}.

Another related family is formed by the {\it semi-weakly CIS graphs} (see Section~\ref{sec:CIS} for definition).
Semi-weakly CIS graphs appeared in the literature under the name of {\it $(2,\ell)$-CIS graphs}
(for large enough $\ell$)~\cite{Gur11} and are equivalent to {\it general partition graphs}, which arose in the geometric setting of
lattice polygon triangulations~\cite{DTR} and were studied in a series of papers~\cite{ADMR,DTDRM,DTHR, DTR0,DRH,KLLM}.
Semi-weakly CIS graphs are strongly related to exact anti-blockers (see~\cite{Gur11} for definitions and proofs.)

In $1980$, Payan~\cite{Pay} introduced {\it equistable graphs} as a generalization of the well known class of
threshold graphs~\cite{MP}. (See Section~\ref{sec:equistable} for definitions.) In $2009$, Orlin proved that every general
partition graph is equistable and conjectured that the converse holds as well; see~\cite{MM-2011}.
{The conjecture was recently disproved~\cite{MTT}.}

In this paper, we consider CIS, almost CIS, split, semi-weakly CIS, equistable
graphs, as well as several further related graph classes (including
edge simplicial, triangle, normal, and perfect graphs) and pursue a systematic study of
relations between them. The known inclusion relations between $27$ graph families considered in this paper
are summarized in the Hasse diagram on p.~\pageref{fig:Hasse}.
We introduce two new graph classes, the class of {\it weakly CIS graphs}---a generalization of CIS graphs---, and
the class of {\it weakly triangle graphs}---a generalization of triangle graphs.
Focusing mostly on self-complementary graph properties, we show that every
edge simplicial co-edge simplicial graph is split, which leads to a new characterization
of split graphs (Theorem~\ref{thm:split-characterization}).
We show the existence of split edge simplicial co-edge simplicial non-CIS graphs, giving a construction based on random graphs,
and an explicit infinite family based on finite projective planes.
Finally, we examine some of the inclusion relations in the case of line graphs, giving several equivalent
characterizations of CIS line graphs, and a polynomial time algorithm for their recognition.

The paper is structured as follows.
In Section~\ref{sec:equistable}, we briefly summarize the definitions and results related to
equistable graphs.
In Section~\ref{sec:further}, we extend the family of classes studied in Section~\ref{sec:equistable},
discussing the edge simplicial, triangle and weakly triangle graphs.
In Section~\ref{sec:CIS}, we discuss two generalizations of CIS graphs.
In Section~\ref{sec:split}, we examine relationships between split, CIS, almost CIS and edge simplicial graphs.
In Section~\ref{sec:split-constructions}, we show the existence
of split edge simplicial co-edge simplicial non-CIS graphs. In Section~\ref{sec:relations}, we summarize and justify the known
inclusion relations between the $27$ considered graph properties. We characterize CIS line graphs in Section~\ref{sec:line}, and
conclude the paper with some open questions in Section~\ref{sec:open}.

\bigskip

\noindent{\bf Notation and preliminary definitions.}
Given a graph $G$ and a vertex $v\in V(G)$, we write $N_G(v)$ (or just $N(v)$ if the graph is clear) for the
{\it neighborhood} of $v$, that is,
the set of vertices adjacent to $v$. By $N[v]$ we denote the {\it closed neighborhood} of $v$, that is, the set $N(v)\cup \{v\}$.
For a graph $G$ and $X\subseteq V(G)$, we write $N(X)$ for $(\cup_{v\in X}N(v))\setminus X$.
Given a graph $G$,  a {\it clique} in $G$ is a set of pairwise
adjacent vertices, and {\it stable} (or {\it independent}) set  in $G$ is a set of pairwise non-adjacent vertices, and
For a subset of vertices $X\subseteq V(G)$, we will denote by $G[X]$ the subgraph of $G$ induced by $X$.
By $P_n$, $C_n$, and $K_n$, we denote the path, the cycle, and the complete graph on $n$ vertices, respectively.
By $K_{m,n}$ we denote the complete bipartite graph with parts of size $m$ and $n$.
For two vertex-disjoint graphs $G_1$ and $G_2$, the {\em disjoint union} of $G_1$ and $G_2$ is the graph defined by
$(V(G_1)\cup V(G_2),E(G_1)\cup E(G_2))$ and denoted by $G_1+G_2$.
The disjoint union of $k$ graphs isomorphic to a graph $H$ will be denoted by $kH$.
The {\it join} of $G_1$ and $G_2$ is the graph defined by
$(V(G_1)\cup V(G_2),E(G_1)\cup E(G_2)\cup\{x_1x_2\mid x_1\in V(G_1),~x_2\in V(G_2)\})$ and denoted by $G_1*G_2$.
The {\it complement} of a graph $G = (V,E)$ is the graph $\overline{G}$ with the same vertex set as $G$,
in which two distinct vertices are adjacent if and only if they are non-adjacent in $G$. A graph $G$ is said to be {\it self-complementary} if it is isomorphic to its complement.
For a set $M$ of graphs, we say that a graph $G$ is {\it $M$-free} if no member of $M$ is an induced subgraph of $G$. For a graph $H$, we say that $G$ is $H$-free if it is $\{H\}$-free.
We say that a graph $G$ is a {\em line graph} if there exists a graph $H$ such that there is a bijective mapping
$\varphi: V(G)\to E(H)$ such that two distinct vertices $u,v\in V(G)$ are adjacent in $G$ if and only if the
edges $\varphi(u)$ and $\varphi(v)$ of $H$ have a common endpoint.
If this is the case, then graph $H$ is said to be a {\em root graph} of $G$, while $G=L(H)$ is the {\em line graph} of $H$.
Except for $G= K_3$, the root graph of a connected line graph $G$ is unique (up to isomorphism), and can be computed in linear time~\cite{Roussopoulos}.
For graph classes not defined here, we refer to~\cite{BLS}.

\section{Equistable graphs, strongly equistable graphs, and general partition graphs}\label{sec:equistable}

In $1980$, Payan introduced equistable graphs as a generalization of the well known class of threshold graphs~\cite{MP}:
A graph $G=(V,E)$ is called {\it equistable} if and only if there exists a
mapping $\varphi :V\to \R_+$
such that for all $S\subseteq V$, $S \textrm{\ is\ a\ maximal\
     stable\ set\ of\ } G$ if and only if \hbox{$\varphi(S):= \sum_{v\in S}\varphi(v) = 1\,.$}~\cite{Pay}.
The mapping $\varphi$ is called an {\it equistable weight function} of $G$.
In~\cite{MPS}, Mahadev et al.~introduced a subclass of equistable graphs, the so-called strongly equistable graphs.
For a graph $G$, we denote by ${\cal S}(G)$ the set of all maximal stable sets of $G$, and by ${\cal T}(G)$ the set of all other nonempty subsets of $V(G)$. A graph $G=(V,E)$
is said to be {\it strongly equistable} if for each $T\in {\cal T}(G)$ and each $\gamma\le 1$ there exists a weight function
$\varphi:V\to\R_+$ such that  $\varphi(S) = 1{\rm\ for\ all\ } S\in {\cal S}(G)$, and $\varphi(T)\neq \gamma$.
Mahadev et al.~showed that every strongly equistable graph is equistable, and conjectured that the converse assertion is valid.

\begin{theorem}[Mahadev-Peled-Sun~\cite{MPS}]\label{thm:Seq-eq}
All strongly equistable graphs are equistable.
\end{theorem}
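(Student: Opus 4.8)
The plan is to unwind the two definitions and notice that strong equistability is only needed in the special case $\gamma=1$. To prove $G=(V,E)$ equistable we must exhibit a \emph{single} function $\varphi:V\to\R_+$ with $\varphi(S)=1$ for every $S\in{\cal S}(G)$ and $\varphi(T)\neq 1$ for every $T\in{\cal T}(G)$. Strong equistability, applied with $\gamma=1$, hands us for each individual $T\in{\cal T}(G)$ a function $\varphi_T\in\R_+^V$ that already satisfies $\varphi_T(S)=1$ for all $S\in{\cal S}(G)$ together with $\varphi_T(T)\neq 1$. The entire content of the theorem is that these $T$-dependent functions can be amalgamated into one function that separates \emph{all} sets of ${\cal T}(G)$ from the value $1$ simultaneously; I would obtain this by a convexity/genericity argument. (If ${\cal T}(G)=\emptyset$ then $G$ has at most one vertex and is trivially equistable, so assume ${\cal T}(G)=\{T_1,\dots,T_k\}$ with $k\geq 1$.)

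First I would fix, for each $j\in\{1,\dots,k\}$, a function $\varphi_j\in\R_+^V$ with $\varphi_j(S)=1$ for all $S\in{\cal S}(G)$ and $\varphi_j(T_j)\neq 1$, and then consider the family of convex combinations $\varphi_\lambda=\sum_{i=1}^k\lambda_i\varphi_i$ parametrized by the open simplex $\Lambda=\{\lambda\in\R^k:\lambda_i>0,\ \sum_i\lambda_i=1\}$. Two of the required properties come for free and for \emph{every} $\lambda\in\Lambda$: each $\varphi_\lambda$ is nonnegative (a convex combination of nonnegative functions), and for every maximal stable set $S$ we have $\varphi_\lambda(S)=\sum_i\lambda_i\varphi_i(S)=\sum_i\lambda_i=1$. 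Thus any $\varphi_\lambda$ is a valid candidate that still must be fine-tuned so as to avoid the value $1$ on the sets $T_j$.

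The remaining task is to choose $\lambda\in\Lambda$ with $\varphi_\lambda(T_j)\neq 1$ for all $j$. For a fixed $j$ the bad condition $\varphi_\lambda(T_j)=1$ is the single linear equation $\sum_i\lambda_i\,\varphi_i(T_j)=1$ in the variable $\lambda$. The key point is that this equation cannot hold identically on the hyperplane $\{\sum_i\lambda_i=1\}$: a linear functional equals the constant $1$ on that hyperplane only if its coefficient vector is the all-ones vector, whereas here the coefficient vector $(\varphi_i(T_j))_{i=1}^k$ differs from all-ones in its $j$-th entry because $\varphi_j(T_j)\neq 1$. Hence, for each $j$, the locus $\{\lambda\in\Lambda:\varphi_\lambda(T_j)=1\}$ lies in a proper affine subspace of the $(k-1)$-dimensional simplex and so has $(k-1)$-dimensional measure zero. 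Taking the union over the finitely many indices $j$, the set of ``bad'' parameters still has measure zero in $\Lambda$, while $\Lambda$ has positive $(k-1)$-dimensional measure; therefore some $\lambda^\ast\in\Lambda$ avoids every bad locus, and the associated $\varphi_{\lambda^\ast}$ is the desired equistable weight function.

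I expect the amalgamation carried out in the last two paragraphs to be the main obstacle: the hypothesis is purely pointwise, giving one weight function per forbidden set $T$, whereas equistability demands one uniform weight function, and it is precisely the genericity argument on $\Lambda$ that bridges this gap. Everything else (nonnegativity, the calibration $\varphi_\lambda(S)=1$ on maximal stable sets, and the degenerate cases $k=1$ or all $\varphi_j$ coinciding) is routine and is in fact subsumed by the same computation.
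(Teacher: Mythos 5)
Your proof is correct, but there is nothing in the paper to compare it against: the paper imports this statement as a cited theorem of Mahadev, Peled and Sun~\cite{MPS} and gives no proof of it. On its own merits, your argument is sound and complete. You correctly observe that only the $\gamma=1$ instances of strong equistability are needed; the calibration $\varphi_\lambda(S)=1$ on all $S\in{\cal S}(G)$ and the nonnegativity of $\varphi_\lambda$ hold for every point of the simplex; and the genericity step is airtight, since for each $T_j$ the equation $\sum_i\lambda_i\varphi_i(T_j)=1$ cannot hold identically on the hyperplane $\sum_i\lambda_i=1$ (evaluating at the vertex $e_j$ gives $\varphi_j(T_j)\neq 1$), so each bad locus is a proper affine slice of measure zero, and a finite union of such sets cannot cover $\Lambda$. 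The degenerate case ${\cal T}(G)=\emptyset$ and the empty set (which gets weight $0\neq 1$) are also handled, so the resulting $\varphi_{\lambda^\ast}$ is a genuine equistable weight function. For comparison, the classical way to organize this amalgamation is an inductive perturbation rather than a one-shot measure argument: given $\varphi$ separating $T_1,\dots,T_{j-1}$ from the value $1$ and $\psi$ separating $T_j$, the combination $(1-t)\varphi+t\psi$ works for all sufficiently small $t>0$, since $(1-t)\varphi(T_j)+t\psi(T_j)=1+t(\psi(T_j)-1)\neq 1$ whenever $\varphi(T_j)=1$, while the finitely many strict inequalities for $T_1,\dots,T_{j-1}$ survive small perturbations. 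Both routes rest on the same convexity of the solution set $\{\varphi\ge 0:\varphi(S)=1\ \forall S\in{\cal S}(G)\}$; yours trades the induction for a single measure-zero union, and makes explicit the slightly stronger fact that the $\gamma=1$ case of strong equistability already suffices.
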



Furthermore, Mahadev, Peled and Sun \cite{MPS} conjectured that equistable graphs are strongly equistable. This statement holds for a class of graphs containing all perfect graphs~\cite{MPS}, for series-parallel graphs~\cite{KP}, for AT-free graphs~\cite{MM-2011}, for various product graphs~\cite{MM-2011}, for line graphs~\cite{LM}, for simplicial graphs~\cite{LM}, for very well covered graphs~\cite{LM}, and for EPT graphs~\cite{AGKM13}. { Recently, however, it was disproved~\cite{MTT}; a $9$-vertex, $14$-edge triangle-free graph was constructed such that the complement of its line graph is equistable but not strongly equistable.}

A graph $G= (V,E)$ is a {\it general partition graph} if there exists a set $U$ and an assignment of non-empty subsets $U_x\subseteq U$
to the vertices of $G$ such that two vertices $x$ and $y$ are adjacent if and only if $U_x\cap U_y\neq\emptyset$, and
for every maximal stable set $S$ of $G$, the set $\{U_x\,:\,x\in S\}$ is a partition of $U$.

\begin{theorem}[see~\cite{MM-2011}]\label{thm:gpg-seq}
All general partition graphs are strongly equistable.
\end{theorem}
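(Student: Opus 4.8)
The plan is to exhibit, for each $T \in \mathcal{T}(G)$ and each $\gamma \le 1$, a weight function that arises from a probability measure on the ground set $U$ of the general partition representation. First I would fix a representation: a set $U$ with non-empty sets $U_x \subseteq U$, $x \in V$, such that $xy \in E$ if and only if $U_x \cap U_y \neq \emptyset$, and every maximal stable set $S$ induces a partition $\{U_x : x \in S\}$ of $U$. For an arbitrary probability measure $\mu$ on $U$, I set $\varphi(x) := \mu(U_x)$. The decisive observation is that $\varphi(S) = 1$ holds for every maximal stable set $S$ and for \emph{every} choice of $\mu$: since $\{U_x : x \in S\}$ is a partition of $U$, the sets are disjoint and cover $U$, so $\sum_{x\in S}\mu(U_x) = \mu(U) = 1$. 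Thus the constraint $\varphi(S) = 1$ for all $S \in \mathcal{S}(G)$ costs us nothing, and the whole freedom in choosing $\mu$ can be spent on arranging $\varphi(T) \neq \gamma$.

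It suffices to take $U$ finite, identifying elements of $U$ lying in the same collection of sets $U_x$ (there are at most $2^{|V|}$ such types). Writing $c_u := |\{x \in T : u \in U_x\}|$, I would record the linear identity $\varphi(T) = \sum_{x \in T}\mu(U_x) = \sum_{u \in U} c_u\,\mu_u$. As $\mu$ ranges over all probability measures, $\varphi(T)$ attains every value in $[\min_u c_u,\ \max_u c_u]$; restricting to full-support $\mu$ (which keeps $\varphi(x) = \mu(U_x) > 0$, since each $U_x \neq \emptyset$) attains the relative interior of this interval.

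The heart of the argument is the claim that the multiplicities $c_u$ are not all equal to $1$. Indeed, $c_u \equiv 1$ would say that $\{U_x : x \in T\}$ is itself a partition of $U$: disjointness forces $T$ to be stable, and the covering property forces $T$ to be maximal, since a vertex $z$ non-adjacent to all of $T$ would have $U_z$ disjoint from every $U_x$ with $x\in T$, contradicting $\emptyset \neq U_z \subseteq U = \bigcup_{x\in T} U_x$. This would make $T$ a maximal stable set, contrary to $T \in \mathcal{T}(G)$. With the claim in hand I split into two cases. If the $c_u$ are constant, their common value $c$ is a positive integer (as $T \neq \emptyset$ and each $U_x \neq \emptyset$) different from $1$, hence $c \ge 2$, so $\varphi(T) = c \ge 2 > 1 \ge \gamma$ for any full-support $\mu$. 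If the $c_u$ are not constant, then $\varphi(T)$ ranges over a non-degenerate interval as $\mu$ varies, hence takes infinitely many values, and I may pick $\mu$ with $\varphi(T) \neq \gamma$. In either case the resulting $\varphi$ witnesses the strong equistability condition for the pair $(T,\gamma)$.

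I expect the main obstacle to be precisely this structural claim that $c_u \equiv 1$ characterizes maximal stable sets among the members of $\mathcal{T}(G)$; this is the point where both halves of the general partition hypothesis must be combined (disjointness yielding stability, covering together with non-emptiness of the $U_x$ yielding maximality). The remaining analytic content — that a non-constant non-negative linear functional on the probability simplex avoids any prescribed value, while a constant one of value at least $2$ automatically exceeds $\gamma \le 1$ — is routine once the claim is in place.
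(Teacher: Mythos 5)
The paper does not actually prove Theorem~\ref{thm:gpg-seq}; it is stated as a known result imported from~\cite{MM-2011}, so there is no in-paper argument to compare against. Your proof is correct and is essentially the standard argument from that reference: weights of the form $\varphi(x)=\mu(U_x)$ for a (full-support) probability measure $\mu$ on the ground set automatically give every maximal stable set weight $1$, and the structural dichotomy you isolate --- the multiplicities $c_u$ are identically $1$ precisely when $T$ is a maximal stable set (a repeated element witnesses non-stability, an uncovered element witnesses non-maximality) --- is exactly the fact that lets one perturb $\mu$ to push $\varphi(T)$ off any prescribed $\gamma\le 1$.
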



In 2009 Orlin (personal communication, 2009, see~\cite{MM-2011}) conjectured that equistable graphs are general partition.
This conjecture, if true, would imply that the classes of general partition graphs, strongly equistable graphs and equistable graphs all coincide~\cite{MM-2011}, and would provide a combinatorial characterization of equistable graphs. It follows from the results by Korach et al.~\cite{KPR} and Peled and Rotics~\cite{PR} that Orlin's conjecture holds within the class of distance-hereditary graphs and chordal graphs, respectively. It has also been verified for AT-free graphs, various product graphs~\cite{MM-2011}, for line graphs~\cite{LM}, for simplicial graphs~\cite{LM}, for very well covered graphs~\cite{LM}, and for EPT graphs~\cite{AGKM13}.
{ Recently, however, it was shown that Orlin's conjecture fails for
complements of line graphs of triangle-free graphs~\cite{MTT}. }

\section{Further related classes: edge simplicial, triangle and weakly triangle graphs}\label{sec:further}

The above chain of inclusion relations
$$\emph{general partition graphs}\subset \emph{strongly equistable graphs}\subset \emph{equistable graphs}$$
can be extended further. Following~\cite{OBDFG,OZ}, we say that a graph is a {\it triangle graph} if it satisfies the following.

\noindent
{\bf Triangle Condition}. For every maximal stable set $S$ in $G= (V,E)$ and every edge $uv$
in $G - S$ there is a vertex $s\in S$ such that $\{u,v,s\}$ induces a triangle in $G$.

The triangle condition was introduced by McAvaney et al.~in~\cite{MRDT}, who proved
that all general partition graphs satisfy the condition.
In~\cite{OBDFG} and~\cite{OZ}, Orlovich et al.~and Orlovich and Zverovich, respectively, discuss several complexity results for triangle graphs.
In the equistable graphs literature, a condition equivalent to the triangle condition was also used, expressed in terms of induced $4$-vertex paths
and maximal stable sets. We say that an induced $4$-vertex path $P_4(a,b,c,d)$ in a graph $G$ is {\it bad} if there exists
a maximal stable set $S$ in $G$ containing $a$ and $d$ such that no vertex from $S$ is adjacent both to $b$ and $c$.
The following two results are equivalent.
\begin{theorem}[Mahadev-Peled-Sun~\cite{MPS}]
\label{thm:badP4}
If $G$ is equistable, then $G$ contains no bad~$P_4$.
\end{theorem}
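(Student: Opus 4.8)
The plan is to prove the contrapositive: assuming $G$ contains a bad $P_4$, I will construct a subset on which any putative equistable weight function $\varphi$ must simultaneously satisfy two incompatible weight equations. Let $P_4(a,b,c,d)$ be a bad induced path, and let $S$ be a maximal stable set containing both $a$ and $d$ such that no vertex of $S$ is adjacent to both $b$ and $c$. The first step is to record the basic structural facts: since $S$ is maximal stable, both $b$ and $c$ have a neighbor in $S$; since the path is induced, $a$ is adjacent to $b$ but not to $c$, and $d$ is adjacent to $c$ but not to $b$; and $b,c$ are adjacent to each other.

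Next I would use the maximality of $S$ to locate the crucial stable sets. The key idea is to consider the sets $S \setminus \{a\} \cup \{b\}$ and $S \setminus \{d\} \cup \{c\}$ and examine whether they extend to maximal stable sets. First I would show that $(S \setminus \{a\}) \cup \{b\}$ is a stable set: this requires that $b$ is nonadjacent to every vertex of $S$ other than $a$, which is where the badness hypothesis that no vertex of $S$ is adjacent to both $b$ and $c$ enters, together with careful bookkeeping of the neighbors of $b$ and $c$ inside $S$. The same should hold symmetrically for $c$ and $d$. Extending these stable sets to maximal ones, I would then compare their $\varphi$-weights, all of which equal $1$ by the defining property of the equistable weight function.

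The heart of the argument is the weight cancellation. Because $S$, and the maximal extensions of the two modified sets, all have $\varphi$-weight equal to $1$, subtracting pairs of these equations should force relations such as $\varphi(a) = \varphi(b)$ and $\varphi(c) = \varphi(d)$ (after the shared portions of the stable sets cancel). Combining these derived equalities with one further maximal stable set---one that exploits the adjacency structure of the $P_4$ to produce a relation pointing the opposite way---should yield a contradiction with $\varphi$ being nonnegative, or force a weight to be zero in a way that violates maximality of some stable set.

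I expect the main obstacle to be the verification that the modified sets genuinely extend to \emph{maximal} stable sets with the right vertices controlled, so that the weight equations line up and the shared terms cancel cleanly. In particular, one must ensure that when extending $(S \setminus \{a\}) \cup \{b\}$ to a maximal stable set, the newly added vertices do not interfere with the cancellation; handling these extension vertices carefully, and ruling out the degenerate cases where $b$ or $c$ has its only $S$-neighbor in an inconvenient place, is the delicate part of the proof. The badness hypothesis---that no single vertex of $S$ dominates both $b$ and $c$---is precisely what guarantees the two sides can be decoupled, so I would make sure every step pinpoints where this assumption is used.
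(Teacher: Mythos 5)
The paper does not actually prove this theorem---it is quoted from Mahadev--Peled--Sun~\cite{MPS}---so your attempt can only be measured against the standard argument. Your proposal breaks down at its first concrete step. You claim that the badness hypothesis makes $(S\setminus\{a\})\cup\{b\}$ a stable set, i.e., that $a$ is the only neighbor of $b$ in $S$. This is false: badness only says that $N(b)\cap S$ and $N(c)\cap S$ are \emph{disjoint}; the vertex $b$ may have many neighbors in $S$ besides $a$ (none of them adjacent to $c$). The sets one must swap are $S_b:=N(b)\cap S$ and $S_c:=N(c)\cap S$ in their entirety: $S_1:=(S\setminus S_b)\cup\{b\}$ and $S_2:=(S\setminus S_c)\cup\{c\}$ are stable, while your single-vertex swaps in general are not. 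Consequently the relations you hope to extract, $\varphi(a)=\varphi(b)$ and $\varphi(c)=\varphi(d)$, are not available; what one actually gets from $\varphi(S_1^*)=1$, where $S_1^*\supseteq S_1$ is a maximal stable extension and $D_1:=S_1^*\setminus S_1$, is $\varphi(b)+\varphi(D_1)=\varphi(S_b)$, and symmetrically $\varphi(c)+\varphi(D_2)=\varphi(S_c)$.

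More importantly, the step you explicitly defer---``handling these extension vertices carefully''---is the heart of the proof, and your sketch contains no idea for it; moreover, your contradiction target (nonnegativity of $\varphi$) is the wrong one. The actual argument runs as follows. First, $D_1\cap S=D_2\cap S=\emptyset$ (a vertex of $S$ outside $S_1$ lies in $S_b$ and is adjacent to $b\in S_1$). Second, and crucially, $D_1\cap D_2=\emptyset$: a common vertex $w$ would be nonadjacent to all of $S\setminus S_b$ and to all of $S\setminus S_c$, hence, since $S_b\cap S_c=\emptyset$ by badness, nonadjacent to all of $S$; as $w\notin S$, this contradicts the maximality of $S$. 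Third, $c\notin S_1^*$ (because $d\in S\setminus S_b\subseteq S_1$ is adjacent to $c$) and $b\notin S_2^*$ (because $a\in S\setminus S_c\subseteq S_2$ is adjacent to $b$); this is where the induced $P_4$ structure enters. Now form the disjoint union $U:=\bigl(S\setminus(S_b\cup S_c)\bigr)\cup\{b,c\}\cup D_1\cup D_2$; the two weight relations give $\varphi(U)=\varphi(S)-\varphi(S_b)-\varphi(S_c)+\varphi(S_b)+\varphi(S_c)=1$. But $U$ contains the edge $bc$, so it is not a maximal stable set, and the ``only if'' direction of equistability forces $\varphi(U)\neq 1$---that is the contradiction. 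Your proposal correctly senses where the difficulty lies, but it neither sets up the right exchange sets nor supplies the disjointness observation and the final non-stable set of weight one that make the proof work.
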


\begin{theorem}[Miklavi\v c-Milani\v c~\cite{MM-2011}] \label{thm:triangle}
Every equistable graph is a triangle graph.
\end{theorem}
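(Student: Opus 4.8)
The plan is to reduce Theorem~\ref{thm:triangle} to the already-stated Theorem~\ref{thm:badP4} by showing that, for an arbitrary graph $G$, the triangle condition is \emph{equivalent} to the absence of bad induced $P_4$'s. Since an equistable graph contains no bad $P_4$ by Theorem~\ref{thm:badP4}, it would then automatically satisfy the triangle condition, i.e.\ be a triangle graph. Thus essentially all the content lies in proving the equivalence of these two graph properties, which I would carry out by contraposition in both directions.

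For the first direction (a bad $P_4$ forces a violation of the triangle condition), suppose $P_4(a,b,c,d)$ is a bad induced path, witnessed by a maximal stable set $S$ with $a,d\in S$ and no vertex of $S$ adjacent to both $b$ and $c$. Because $ab,cd\in E$ while $a,d\in S$ and $S$ is stable, neither $b$ nor $c$ can lie in $S$; hence $bc$ is an edge of $G-S$. The witnessing property of $S$ says precisely that no $s\in S$ makes $\{b,c,s\}$ a triangle, so the triangle condition fails for the pair $(S,bc)$.

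For the second (main) direction (a violation of the triangle condition produces a bad $P_4$), suppose the triangle condition fails: there is a maximal stable set $S$ and an edge $uv$ with $u,v\notin S$ such that no $s\in S$ is adjacent to both $u$ and $v$. Maximality of $S$ supplies a neighbour $a\in S$ of $u$ and a neighbour $d\in S$ of $v$. I would then check the six adjacencies among $a,u,v,d$: the edges $au,uv,vd$ hold by choice; $ad\notin E$ since $a,d\in S$; and $av\notin E$, $ud\notin E$, as well as $a\neq d$, all because otherwise $a$ (resp.\ $d$) would be a vertex of $S$ adjacent to both $u$ and $v$. Hence $a-u-v-d$ is an induced $P_4$, and taking $S$ itself as witness---it contains the endpoints $a,d$, and by hypothesis no vertex of $S$ is adjacent to both middle vertices $u,v$---shows this $P_4$ is bad.

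Combining the two directions establishes the equivalence, and feeding the resulting property through Theorem~\ref{thm:badP4} yields Theorem~\ref{thm:triangle}. The only delicate point is the bookkeeping in the second direction: one must confirm that the four vertices $a,u,v,d$ are distinct and that all three required non-edges hold, so that the path is genuinely induced, and that the \emph{same} maximal stable set $S$ serves as the badness witness. Both verifications follow immediately from the ``no common neighbour in $S$'' hypothesis, so I do not expect a real obstacle here; the argument is essentially a dictionary translating between the two formulations of the same structural phenomenon.
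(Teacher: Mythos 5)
Your proposal is correct and takes essentially the same route as the paper: the paper itself asserts (citing~\cite{MM-2011}, without spelling out the details) that Theorem~\ref{thm:badP4} and Theorem~\ref{thm:triangle} are equivalent, and your two contraposition arguments supply precisely that equivalence, with the distinctness of $a,d$ and the three required non-edges in the second direction all correctly verified from the ``no common neighbour in $S$'' hypothesis.
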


The following closure property of triangle graphs will be used in later sections.

\begin{proposition}\label{prop:triangle-disjoint-union-join}
If $G_1$ and $G_2$ are triangle, then their disjoint union $G_1+G_2$ and join $G_1*G_2$ are also triangle.
\end{proposition}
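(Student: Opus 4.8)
The plan is to reduce both closure properties to the triangle condition of the factors, after first pinning down the maximal stable sets of a disjoint union and of a join. For the disjoint union $G = G_1 + G_2$, since there are no edges between $V(G_1)$ and $V(G_2)$, a set is a maximal stable set of $G$ if and only if it has the form $S_1 \cup S_2$ with $S_i$ a maximal stable set of $G_i$; moreover every edge of $G$ lies entirely within $V(G_1)$ or entirely within $V(G_2)$. For the join $G = G_1 * G_2$, every vertex of $G_1$ is adjacent to every vertex of $G_2$, so no stable set meets both parts; consequently a maximal stable set of $G$ is exactly a maximal stable set of one of the factors (viewed inside that factor). I would record these two structural observations first, as they drive everything else. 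Throughout I assume $G_1$ and $G_2$ are nonempty, so that every maximal stable set is nonempty.

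The disjoint union is then immediate. Fix a maximal stable set $S = S_1 \cup S_2$ of $G$ and an edge $uv$ with $u, v \notin S$. By the structural observation the edge lies in a single factor, say $u, v \in V(G_1)$; then $u, v \notin S_1$, and applying the triangle condition of $G_1$ to the maximal stable set $S_1$ and the edge $uv$ produces $s \in S_1 \subseteq S$ with $\{u, v, s\}$ a triangle. The case $u, v \in V(G_2)$ is symmetric.

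For the join, fix a maximal stable set $S$ of $G$; by the structural observation I may assume $S \subseteq V(G_1)$ and $S$ is a maximal stable set of $G_1$. Let $uv$ be an edge with $u, v \notin S$, and split into three cases according to where $u$ and $v$ lie. If $u, v \in V(G_1)$, the triangle condition of $G_1$ again yields the desired $s \in S$. If $u, v \in V(G_2)$, then any $s \in S$ works, since $s \in V(G_1)$ is adjacent in the join to every vertex of $V(G_2)$, and $S \neq \emptyset$ because $G_1$ is nonempty. The remaining case, $u \in V(G_1)$ and $v \in V(G_2)$, is the one requiring care: adjacency of a witness $s \in S$ to $v$ is automatic (a join edge), so it suffices to find $s \in S$ adjacent to $u$ in $G_1$; since $S$ is a maximal stable set of $G_1$ and $u \in V(G_1) \setminus S$, maximality forces $u$ to have a neighbor $s$ in $S$, and then $\{u, v, s\}$ is a triangle.

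The main obstacle is not any single computation but getting the description of maximal stable sets of the join right and handling the mixed-edge case correctly: the witness vertex there comes not from the triangle condition of a factor but from the maximality of $S$ as a stable set of $G_1$. Once these two points are in place, the proof is a short case analysis.
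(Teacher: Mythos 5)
Your proof is correct and takes essentially the same route as the paper's: the same structural description of the maximal stable sets of $G_1+G_2$ (products $S_1\cup S_2$) and of $G_1*G_2$ (maximal stable sets of a single factor), followed by the same case analysis, including the key point that in the mixed case $u\in V(G_1)$, $v\in V(G_2)$ the witness $s$ comes from the maximality of $S$ in $G_1$ rather than from the triangle condition. The only cosmetic differences are that you split the join argument into three explicit cases where the paper merges two of them, and that you state the (harmless) nonemptiness assumption explicitly.
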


\begin{proof}
It can be seen that the collections of maximal stable sets of $G_1$, $G_2$, $G_1+G_2$, and $G_1*G_2$, denoted respectively by
${\cal S}(G_1)$, ${\cal S}(G_2)$, ${\cal S}(G_1+G_2)$, ${\cal S}(G_1*G_2)$, are related as follows:
\begin{eqnarray}
\label{eq1}
  {\cal S}(G_1+G_2) &=& \{S_1\cup S_2\mid S_1\in {\cal S}(G_1)\,, S_2\in {\cal S}(G_2)\}\,,\\
\label{eq2}
{\cal S}(G_1*G_2) &=& {\cal S}(G_1)\cup {\cal S}(G_2) \text{ (disjoint union)}\,.
\end{eqnarray}
Suppose that $G_1$ and $G_2$ are triangle.
Let $S$ be a maximal stable set in $G:=G_1+G_2$, and let $uv$ be an edge in
$G - S$. Without loss of generality, we may assume that $u,v\in V(G_1)$.
By~\eqref{eq1}, we have $S = S_1\cup S_2$ for some $S_1\in {\cal S}(G_1)\,, S_2\in {\cal S}(G_2)$, and
the triangle condition for $G_1$
implies the existence of a vertex $s\in S_1\subseteq S$ such that $\{u,v,s\}$ induces a triangle in $G_1$ (and hence in $G$). Therefore, $G$ is triangle.

Now, let $S$ be a maximal stable set in $G':=G_1*G_2$, and let $uv$ be an edge in
$G' - S$. By~\eqref{eq2}, we may assume without loss of generality that $S$ is a maximal stable set in $G_1$.
If $u,v\in V(G_1)$, then the triangle condition for $G_1$
implies the existence of a vertex $s\in S$ such that $\{u,v,s\}$ induces a triangle in $G_1$.
So we may assume that $v\in V(G_2)$. Notice that $u$ has a neighbor $s$ in $S$: if $u\in V(G_1)$ then this is true by the maximality of $S$; if $u\in V(G_2)$
then this follows from the definition of the join. Moreover, since $s\in V(G_1)$ and $v\in V(G_2)$, $s$ is adjacent to $v$.
In either case, $\{u,v,s\}$ induces a triangle in $G'$, hence $G'$ is triangle.
\qed
\end{proof}

We now introduce a further generalization of the triangle property.
A family of maximal stable sets ${\cal S}$ in a graph $G$ is said to be {\em non-edge covering} if every two distinct non-adjacent vertices $x,y\in V(G)$ are contained in a stable set from ${\cal S}$.

\begin{definition}\label{def:wt}
A graph $G$ is said to be {\em weakly triangle} if there exists a
non-edge covering collection ${\cal S}$ of maximal stable sets of $G$ that satisfies
the following {\em triangle property}:
for every $S\in{\cal S}$ and every pair $u,v$ of adjacent vertices in $V(G)\setminus S$, vertices $u$ and $v$ have a common neighbor in $S$.
\end{definition}

It follows directly from the definitions that every triangle graph is weakly triangle\label{triangle-weakly-triangle} (just take ${\cal S}$ to be the family of all maximal stable sets of $G$).
If every non-edge in a graph $G$ is contained in a unique maximal stable set, then $G$ is weakly triangle if and only if $G$ is triangle.
In particular, this implies that the $4$-vertex path $P_4$ is not weakly triangle.\label{ex:P4}

A clique $C$ in a graph $G$ is {\em simplicial} if there exists a vertex $v\in V(G)$ such that
$C = N[v]$. A graph $G$ is said to be {\em edge simplicial} if every edge of it is contained in a simplicial clique~\cite{CJ2006}.
As shown by Cheston et al.~in~\cite{ChesHaLas}, the edge simplicial graphs are equivalent to
the so-called upper bound graphs introduced by McMorris and Zaslavsky in 1982~\cite{MZ-1982}.
A graph $G$ is a {\em upper bound} graph if there exists a finite
partially ordered set $(V(G),\le)$ such that two distinct vertices $u,v$ of $G$ are adjacent
if and only if there exists $w\in V(G)$ such that $u\le w$ and $v\le w$.

For later use, we mention this equivalence explicitly.

\begin{proposition}[Cheston et al.~\cite{ChesHaLas}]\label{prop:edge-simplicial-upper-bound}
A graph $G$ is edge simplicial if and only if it is upper bound.
\end{proposition}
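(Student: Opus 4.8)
The plan is to prove the two directions of the equivalence separately, constructing for each an explicit combinatorial object from the other. Recall that $G$ is edge simplicial if every edge lies in a simplicial clique $N[v]$, and $G$ is upper bound if there is a partial order $(V(G),\le)$ in which distinct $u,v$ are adjacent exactly when they have a common upper bound $w$ (where $w$ may coincide with $u$ or $v$). The key observation that drives the whole argument is the following dictionary: in an upper bound graph, the maximal elements of the poset correspond precisely to apices of simplicial cliques, since if $w$ is maximal then $N[w] = \{x : x \le w\}$, and this set is a clique (any two of its elements share the common upper bound $w$) which is simplicial by construction.

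First I would prove that every upper bound graph is edge simplicial. Given a poset $(V(G),\le)$ realizing $G$, I take an arbitrary edge $uv$. By the adjacency condition there is some $w$ with $u \le w$ and $v \le w$; extending $w$ to a maximal element $w^*$ of the poset (using finiteness), I still have $u \le w^*$ and $v \le w^*$. I then claim $C := \{x : x \le w^*\} = N[w^*]$ is a simplicial clique containing both $u$ and $v$. Every pair in $C$ has $w^*$ as a common upper bound, so $C$ is a clique; and $C = N[w^*]$ because any vertex adjacent to $w^*$ must lie below $w^*$ (as $w^*$ is maximal, a common upper bound of that vertex and $w^*$ must equal $w^*$). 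Hence $C$ is simplicial, and $uv \subseteq C$, as required.

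For the converse, that every edge simplicial graph is upper bound, I would define a poset on $V(G)$ and verify it realizes $G$. Since $G$ is edge simplicial, for each edge there is a simplicial clique containing it; let me fix a family of apices $\{v_1,\dots,v_k\}$ such that the cliques $N[v_i]$ cover all edges of $G$. The natural construction is to set $x \le y$ if and only if $x = y$, or $y$ is one of these chosen apices and $x \in N[y]$. One must check this relation is a partial order (reflexivity is built in; antisymmetry and transitivity require care and may force a careful choice of apices, e.g.\ taking the apices to be an antichain or imposing a tie-breaking on which simplicial clique is assigned to each edge) and that its comparability-via-common-upper-bound relation is exactly adjacency in $G$. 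One direction is immediate: if $xy$ is an edge, it lies in some $N[v_i]$, so $x \le v_i$ and $y \le v_i$ give a common upper bound. The reverse direction, that a common upper bound forces adjacency, follows because any common upper bound $w$ of distinct $x,y$ must (by the shape of $\le$) be a chosen apex with $x,y \in N[w]$, and $N[w]$ being a clique makes $x,y$ adjacent.

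The main obstacle I anticipate is not either implication individually but rather the bookkeeping in the converse needed to make $\le$ a genuine partial order while simultaneously preserving the adjacency correspondence. Transitivity is the delicate point: if $x \le y$ and $y \le z$ with $y,z$ both apices, one needs $x \le z$, which need not hold unless the chosen simplicial cliques are nested or the apices are handled so that no apex lies in the open neighborhood of another chosen apex. The cleanest fix is likely to restrict the set of apices so that they form an antichain under inclusion of closed neighborhoods (or to use only the maximal simplicial cliques), making $\le$ into a height-at-most-two order for which transitivity holds trivially; verifying that such a restricted family still covers every edge is the crux. Since this equivalence is cited from Cheston et al.~\cite{ChesHaLas}, I expect the published argument to handle precisely this point, and my reconstruction would mirror their construction of the poset from the simplicial clique cover.
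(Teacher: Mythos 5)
The paper does not actually prove this proposition --- it is quoted from Cheston et al.~\cite{ChesHaLas} --- so there is no in-paper argument to compare against; your proposal must stand on its own. Your forward direction (upper bound $\Rightarrow$ edge simplicial) is complete and correct: extending a common upper bound of the endpoints of an edge to a maximal element $w^*$ of the poset and verifying $\{x : x \le w^*\} = N[w^*]$ is exactly the right argument, and both inclusions are checked properly.

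In the converse direction your construction is the right one, but the point you flag as ``the crux'' --- making $\le$ a genuine partial order while preserving the edge cover --- should be closed rather than deferred to the published source, and it closes more easily than you suggest. The key observation is: \emph{if two simplicial vertices $u,v$ are adjacent, then $N[u]=N[v]$}. Indeed, $v\in N[u]$ and $N[u]$ is a clique, so every vertex of $N[u]$ is adjacent or equal to $v$, giving $N[u]\subseteq N[v]$, and symmetrically $N[v]\subseteq N[u]$. Consequently, if you choose one apex for each \emph{distinct} simplicial clique in an edge-covering family, any two chosen apices are non-adjacent (adjacent apices would have equal closed neighborhoods, i.e., would define the same clique). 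With pairwise non-adjacent apices, your relation ``$x\le y$ iff $x=y$, or $y$ is a chosen apex and $x\in N[y]$'' is automatically antisymmetric, and transitivity holds vacuously because there is no chain $x<y<z$: $y<z$ would make the apex $y$ adjacent to the apex $z$. So $\le$ is a height-two partial order, and the coverage question you worried about does not arise at all --- the family of cliques is unchanged; you merely select one apex per clique. Your verification of the adjacency correspondence (including the cases where the common upper bound equals $x$ or $y$) is correct as written. In short, the proof is correct once this one observation is inserted; without it, the converse direction as you wrote it is incomplete, since with an arbitrary choice of apices antisymmetry can genuinely fail (e.g., two adjacent simplicial vertices chosen as apices for the same clique).
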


As will be argued in Section~\ref{sec:CIS}, edge simplicial graphs are related to general partition graphs as follows.

\begin{proposition}\label{prop:es-gpg}
Every edge simplicial graph is a general partition graph.
\end{proposition}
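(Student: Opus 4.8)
The plan is to build an explicit general partition representation out of the simplicial cliques of $G$. Recall that such a representation consists of a ground set $U$ together with nonempty sets $U_x\subseteq U$ (for $x\in V(G)$) such that two vertices are adjacent precisely when their sets meet, and such that for every maximal stable set $S$ the family $\{U_x:x\in S\}$ partitions $U$. I would take $U$ to be the set of all simplicial cliques of $G$, i.e.\ all cliques of the form $N[v]$, and set $U_x=\{C\in U:x\in C\}$. Each $U_x$ is nonempty: if $x$ is not isolated, then any edge incident to $x$ lies in a simplicial clique by the edge simplicial hypothesis, and if $x$ is isolated, then $N[x]=\{x\}$ is itself a simplicial clique.

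First I would check the adjacency condition. If $x$ and $y$ are adjacent, then the edge $xy$ is contained in some simplicial clique $C$, whence $C\in U_x\cap U_y$; conversely, if a simplicial clique $C$ contains both $x$ and $y$, then $x$ and $y$ are adjacent since $C$ is a clique. So adjacency corresponds exactly to nonempty intersection of the representing sets.

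The heart of the argument is the partition condition, which rests on the single observation that every simplicial clique meets every maximal stable set of $G$. This is immediate from maximality: given $N[v]$ and a maximal stable set $S$, either $v\in S$ and then $v\in N[v]\cap S$, or $v\notin S$ and then the maximality of $S$ forces $v$ to have a neighbor $s\in S$, so $s\in N(v)\subseteq N[v]$. Granting this, fix a maximal stable set $S$. The sets $\{U_x:x\in S\}$ are pairwise disjoint, since a common simplicial clique of two vertices of $S$ would make them adjacent, contradicting stability; and they cover $U$, since by the observation every simplicial clique contains a vertex of $S$. Hence $\{U_x:x\in S\}$ is a partition of $U$, as required. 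I do not expect a genuine obstacle here: the construction is essentially forced, and the only point needing care is guaranteeing that $U_x$ is nonempty for isolated vertices, which is why the singleton simplicial cliques are kept in $U$.
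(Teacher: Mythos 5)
Your proof is correct, and it takes a more self-contained route than the paper. The paper derives the proposition in two steps: it first observes that every simplicial clique is strong (meets every maximal stable set), so an edge simplicial graph is semi-weakly CIS, i.e., has an edge covering family of strong cliques; it then invokes, as a black box, the cited theorem of McAvaney, Robertson and DeTemple that a graph is semi-weakly CIS if and only if it is a general partition graph. Your argument uses exactly the same key fact --- that $N[v]$ meets every maximal stable set, by maximality --- but instead of routing through the cited equivalence, you directly build the general partition representation: $U$ is the set of simplicial cliques and $U_x$ is the set of simplicial cliques containing $x$, with adjacency corresponding to intersection (this is where edge simpliciality is used) and the partition property for maximal stable sets following from strongness (covering) plus stability (disjointness). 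In effect you have inlined and reproved the relevant direction of the McAvaney--Robertson--DeTemple theorem in this special case. What the paper's route buys is brevity and placement of edge simplicial graphs inside the semi-weakly CIS class, which it needs elsewhere in the hierarchy; what your route buys is a completely elementary, citation-free proof, including the careful treatment of isolated vertices via the singleton cliques $N[x]=\{x\}$, a point the abstract route never has to confront explicitly.
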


{The inclusion relations between the six considered graph classes can be summarized as follows:
\begin{eqnarray*}
&&\emph{edge simplicial graphs}\\
&\subset &\emph{general partition graphs} \\
&\subset &\emph{strongly equistable graphs} \\
&\subset &\emph{equistable graphs}\\
&\subset &\emph{triangle graphs}\\
&\subset &\emph{weakly triangle graphs}\,.
\end{eqnarray*}
All the inclusions are proper (see Section~\ref{sec:relations} for more details).}

\section{Two generalizations of CIS graphs}\label{sec:CIS}

{\bf Self-complementary graph properties.}
Self-complementary graph properties will play a particular role in this and subsequent sections.
A {\em graph property} (or: a {\em graph class}) is a set of graphs closed under isomorphism.
We will not distinguish between ``$G$ has property ${\cal P}$'' and ``$G\in {\cal P}$''.
For a graph property ${\cal P}$, we write that a graph is co-${\cal P}$ if its complement $\overline{G}$
has property ${\cal P}$, $\cap$-${\cal P}$ if
$G$ is ${\cal P}$ and co-${\cal P}$, and $\cup$-${\cal P}$ if $G$
is ${\cal P}$ or co-${\cal P}$.
A property ${\cal P}$ is said to be {\em self-complementary} if ${\cal P} = $co-${\cal P}$.
If ${\cal P}$ is a self-complementary graph property then clearly
$${\cal P} = \textrm{co-}{\cal P}= \cap\textrm{-}{\cal P} = \cup\textrm{-}{\cal P}\,.$$
Several well-known graph families are self-complementary, including
perfect graphs,
split graphs, 
threshold graphs, 
cographs, 
CIS graphs, 
and almost CIS graphs.

Recall the definition of CIS graphs.

\begin{definition}
\label{dfn:CIS}
We say that a graph $G$  is {\em CIS} if
every maximal clique $C$ and every maximal stable set
$S$ in $G$ intersect, that is, $C \cap S \neq \emptyset$.
\end{definition}

A {\it strong clique} in a graph $G$ is a clique that meets all maximal stable sets of $G$.
Similarly, a {\it strong stable set} is a stable set that meets all maximal cliques.
Notice that a graph is CIS if and only if every maximal clique is strong, or, equivalently,
if every maximal stable set is strong.

We now introduce two generalizations of CIS graphs.
Recall that a family of maximal stable sets ${\cal S}$ in a graph $G$ is called {\em non-edge covering} if every two distinct non-adjacent vertices $x,y\in V(G)$ are contained in a stable set from ${\cal S}$.
Similarly, a family of maximal cliques ${\cal C}$ is said to be {\em edge covering} if every two adjacent vertices $x,y\in V(G)$ are contained in a clique from ${\cal C}$.
Two families of sets ${\cal A}$, ${\cal B}$ are said to be {\em cross-intersecting} if
$A\cap B\neq\emptyset$ for every $A\in {\cal A}$ and $B\in {\cal B}$.

\begin{definition}
A graph $G$ is said to be {\em weakly CIS} if there exists
a cross-intersecting pair ${\cal C}$, ${\cal S}$ of families of subsets of $V(G)$ such that
${\cal C}$ is an edge covering collection of maximal cliques
and ${\cal S}$ is a non-edge covering collection of maximal stable sets.
\end{definition}

\begin{definition}
A graph $G$ is {\em semi-weakly CIS} if it admits an edge covering family of strong cliques.
\end{definition}

\begin{remark}
It follows directly from the definition that the weakly CIS property is self-complementary. On the other hand,
the semi-weakly CIS property is not self-complementary. To see this, observe that the $3$-sun, that is,
the graph obtained from the complete graph $K_3$ by pasting a new triangle along each edge of the $K_3$ is
semi-weakly CIS (in fact, edge simplicial), but not co-semi-weakly CIS.
\end{remark}

Recall that semi-weakly CIS graphs appeared in the literature under the name of {\it $(2,\ell)$-CIS graphs}
(for large enough $\ell$)~\cite{Gur11}. The following is a reformulation of a result from~\cite{MRDT}.

\begin{proposition}[McAvaney-Robertson-DeTemple~\cite{MRDT}]\label{prop:semi-weakly-CIS=gpg}
A graph $G$ is semi-weakly CIS if and only if $G$ is a general partition graph.
\end{proposition}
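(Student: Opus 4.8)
The plan is to establish both implications through a single incidence construction linking the vertices of $G$ to a family of cliques, exploiting the correspondence ``element of the ground set $\leftrightarrow$ clique'' in one direction and ``clique $\leftrightarrow$ element of the ground set'' in the other.

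For the forward direction, suppose $G$ is a general partition graph with ground set $U$ and assignment $x \mapsto U_x$. For each $u \in U$ I would set $C_u = \{x \in V(G) : u \in U_x\}$. First I would check that $C_u$ is a clique: any two of its vertices share $u$ in their assigned sets, so their sets intersect and they are adjacent. Next I would argue that $C_u$ is strong: given a maximal stable set $S$, the partition property guarantees that $u$ lies in exactly one block $U_{x_0}$ with $x_0 \in S$, and then $x_0 \in C_u \cap S$, so $C_u$ meets $S$. Finally, the family $\{C_u : u \in U\}$ is edge covering, since an edge $xy$ forces $U_x \cap U_y \neq \emptyset$ and any $u$ in the intersection places both $x$ and $y$ in $C_u$. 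This produces an edge covering family of strong cliques, witnessing that $G$ is semi-weakly CIS.

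For the converse, suppose $G$ admits an edge covering family ${\cal C}$ of strong cliques. I would take the ground set $U := {\cal C}$ and assign to each vertex $x$ the set $U_x = \{C \in {\cal C} : x \in C\}$ of cliques containing it. The adjacency condition is then immediate: if $xy \in E(G)$, some clique covers the edge and lies in $U_x \cap U_y$; conversely, any clique in $U_x \cap U_y$ forces the distinct vertices $x,y$ to be adjacent, being members of a common clique. For the partition condition I would fix a maximal stable set $S$: distinct vertices of $S$ are non-adjacent and hence have disjoint images $U_x$, while each $C \in {\cal C}$ meets $S$ because it is strong, so $\{U_x : x \in S\}$ simultaneously covers $U$ and is pairwise disjoint, i.e.\ a partition of $U$.

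The one technical point I expect to be the main (though minor) obstacle is that the general partition definition requires every $U_x$ to be \emph{non-empty}, i.e.\ every vertex must lie in some clique of ${\cal C}$. The edge covering condition supplies this for every non-isolated vertex, leaving only isolated vertices unaccounted for. Here I would observe that an isolated vertex $x$ belongs to every maximal stable set, so the singleton $\{x\}$ is itself a strong clique; augmenting ${\cal C}$ with these singletons preserves both the edge covering and strongness properties while ensuring $U_x \neq \emptyset$ for all $x$. Once this is arranged, the two verifications above go through without further modification.
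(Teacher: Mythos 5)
Your proposal is correct, and it fills a gap the paper deliberately leaves open: the paper gives no proof of this proposition at all, stating it only as ``a reformulation of a result from'' McAvaney--Robertson--DeTemple~\cite{MRDT}. Your argument is the standard dualization that underlies that reformulation: ground-set elements $u$ correspond to the strong cliques $C_u=\{x : u\in U_x\}$, and conversely the cliques of an edge-covering strong family serve as ground-set elements, with strongness of cliques translating exactly into the ``every maximal stable set induces a partition of $U$'' condition. Both directions check out, and you correctly isolate and repair the only real technical wrinkle, namely that edge covering says nothing about isolated vertices, which you handle by adjoining the singleton strong cliques $\{x\}$ (legitimate, since an isolated vertex lies in every maximal stable set). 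One pedantic point worth adding: the paper defines ``edge covering'' for families of \emph{maximal} cliques, whereas your cliques $C_u$ in the forward direction need not be maximal; this is harmless, since any maximal clique containing a strong clique is itself strong, so you may replace each $C_u$ by a maximal clique containing it without losing either strongness or the edge-covering property.
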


It can be seen that every simplicial clique is strong. Therefore:

\begin{proposition}\label{prop:edge-simplicial-semi-weakly-CIS}
Every edge simplicial graph is semi-weakly CIS.
\end{proposition}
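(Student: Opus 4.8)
The plan is to prove Proposition~\ref{prop:edge-simplicial-semi-weakly-CIS} directly from the definitions, using the hint provided in the sentence preceding the statement, namely that \emph{every simplicial clique is strong}. Recall that a graph is semi-weakly CIS if it admits an edge covering family of strong cliques, so the goal reduces to exhibiting such a family for an arbitrary edge simplicial graph $G$. By definition of edge simplicial, every edge of $G$ lies in a simplicial clique, that is, a clique of the form $N[v]$ for some vertex $v$. Thus I would let $\mathcal{C}$ be the collection of all simplicial cliques of $G$ (or, to be safe, a sub-collection containing one simplicial clique for each edge). Since every edge is contained in a simplicial clique, this family is automatically edge covering; the only remaining thing to verify is that each member is strong.

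The crux is therefore the auxiliary claim that every simplicial clique $C = N[v]$ is strong, i.e.\ that $C$ meets every maximal stable set $S$ of $G$. First I would observe that a simplicial clique $N[v]$ is in particular a maximal clique: no vertex outside $N[v]$ can be adjacent to $v$, so $N[v]$ cannot be enlarged. To see that $N[v]$ meets an arbitrary maximal stable set $S$, I would argue by the maximality of $S$. If $v\in S$ then $v\in N[v]\cap S$ and we are done. Otherwise $v\notin S$, and maximality of $S$ forces $v$ to have a neighbor in $S$, say $w\in S$ with $w\in N(v)\subseteq N[v]$; hence $w\in N[v]\cap S$. In either case $N[v]\cap S\neq\emptyset$, so $N[v]$ is strong.

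Putting these pieces together, the family $\mathcal{C}$ of simplicial cliques is an edge covering family of strong cliques, which is exactly what is required for $G$ to be semi-weakly CIS. I do not expect any genuine obstacle here: the proof is a short unwinding of the definition of \emph{strong clique} combined with the defining property of a simplicial clique $N[v]$, and the only subtlety worth stating explicitly is that a simplicial clique is maximal (so that it is a legitimate member of a family of maximal cliques, matching the form of the families appearing in the definition of weakly CIS, even though the semi-weakly CIS definition itself only speaks of strong cliques). The argument is entirely self-contained and uses nothing beyond the definitions given in the excerpt.
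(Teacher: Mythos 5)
Your proof is correct and follows exactly the paper's route: the paper simply asserts that ``every simplicial clique is strong'' and derives the proposition from it, while you supply the short verification of that assertion (the case analysis $v\in S$ versus $v\notin S$ using maximality of $S$) together with the observation that simplicial cliques are maximal. No gaps; this is the same argument with the omitted details filled in.
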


In particular, together with Proposition~\ref{prop:semi-weakly-CIS=gpg}, this implies
Proposition~\ref{prop:es-gpg} from Section~\ref{sec:further}.

We now state a closure property of semi-weakly CIS graphs that we will use in later sections.

\begin{proposition}\label{prop:semi-weakly-CIS-disjoint-union-join}
If $G_1$ and $G_2$ are semi-weakly CIS, then their disjoint union $G_1+G_2$ and join $G_1*G_2$ are also semi-weakly CIS.
\end{proposition}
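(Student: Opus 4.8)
The plan is to prove the closure property directly from the definition of semi-weakly CIS graphs, using the structural descriptions of maximal stable sets of disjoint unions and joins established in Proposition~\ref{prop:triangle-disjoint-union-join}. Recall that a graph is semi-weakly CIS if it admits an edge covering family of strong cliques, where a clique is strong if it meets every maximal stable set. So for each of $G_1$ and $G_2$ I would start with edge covering families $\mathcal{C}_1$, $\mathcal{C}_2$ of strong cliques, and the task is to build such a family for $G_1+G_2$ and for $G_1*G_2$.

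For the disjoint union $G:=G_1+G_2$, every edge lies entirely within $V(G_1)$ or within $V(G_2)$, so I would simply take $\mathcal{C} := \mathcal{C}_1\cup\mathcal{C}_2$; this is clearly edge covering. The point to check is that each clique in $\mathcal{C}$ remains strong in $G$. Using equation~\eqref{eq1}, every maximal stable set $S$ of $G$ decomposes as $S=S_1\cup S_2$ with $S_i\in\mathcal{S}(G_i)$. A clique $C\in\mathcal{C}_1$ is strong in $G_1$, so it meets $S_1\subseteq S$, and hence $C$ meets $S$; symmetrically for $\mathcal{C}_2$. Thus $\mathcal{C}$ is an edge covering family of strong cliques in $G$, and $G$ is semi-weakly CIS.

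For the join $G':=G_1*G_2$, the maximal cliques behave differently: an edge between $V(G_1)$ and $V(G_2)$ is not contained in any clique of $\mathcal{C}_1\cup\mathcal{C}_2$. The natural fix is to take clique \emph{joins}: for $C_1\in\mathcal{C}_1$ and $C_2\in\mathcal{C}_2$, the set $C_1\cup C_2$ is a clique in $G'$ since $G'$ adds all cross edges. I would let $\mathcal{C}':=\{C_1\cup C_2 \mid C_1\in\mathcal{C}_1,\ C_2\in\mathcal{C}_2\}$. Every edge of $G'$ is covered: an edge inside $V(G_i)$ is covered because some $C_i\in\mathcal{C}_i$ contains it and we may pad with any member of the other family, while a cross edge $uv$ with $u\in V(G_1)$, $v\in V(G_2)$ is covered by $C_1\cup C_2$ for any $C_i\ni u$ (resp.\ $v$) — here I would use that each $v$ lies in some clique of $\mathcal{C}_i$, which holds as long as $v$ is incident to an edge, with isolated vertices needing a small separate remark.

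The main obstacle is verifying that each $C_1\cup C_2$ is strong in $G'$, and this is exactly where equation~\eqref{eq2} does the work: every maximal stable set $S$ of $G'$ is a maximal stable set of $G_1$ or of $G_2$. Suppose $S\in\mathcal{S}(G_1)$ (the other case being symmetric). Since $C_1$ is strong in $G_1$, we have $C_1\cap S\neq\emptyset$, so $(C_1\cup C_2)\cap S\neq\emptyset$ as well. Hence every member of $\mathcal{C}'$ meets every maximal stable set of $G'$, so $\mathcal{C}'$ is an edge covering family of strong cliques and $G'$ is semi-weakly CIS. The only subtlety worth handling carefully is the boundary case of vertices lying in no edge (e.g.\ when $G_1$ or $G_2$ has isolated vertices or is a single vertex); in the join these become non-isolated, and one checks that the padding construction still covers all incident edges, so the argument goes through.
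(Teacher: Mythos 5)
Your proof is correct and takes essentially the same route as the paper's: the identical constructions $\mathcal{C}_1\cup\mathcal{C}_2$ for the disjoint union and $\{C_1\cup C_2 \mid C_1\in\mathcal{C}_1,\ C_2\in\mathcal{C}_2\}$ for the join, with strongness verified via relations~\eqref{eq1} and~\eqref{eq2}. The isolated-vertex subtlety you flag is handled in the paper by assuming from the outset that $\mathcal{C}_1$ and $\mathcal{C}_2$ cover all vertices as well as all edges, which is harmless because for an isolated vertex $v$ the singleton $\{v\}$ is a strong clique (every maximal stable set contains $v$), so any edge covering family of strong cliques can be augmented to a vertex covering one.
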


\begin{proof}
We proceed similarly as in the proof of Proposition~\ref{prop:triangle-disjoint-union-join}, making use of relations \eqref{eq1} and~\eqref{eq2}.
Suppose that $G_1$ and $G_2$ are semi-weakly CIS, and let ${\cal C}_1$ (resp.,~${\cal C}_2$) be
a collection of strong cliques covering all edges and all vertices of $G_1$ (resp.,~$G_2$).
It follows from~\eqref{eq1} that ${\cal C}_1\cup {\cal C}_2$
is an edge-covering collection of strong cliques of $G_1+G_2$, hence
$G_1+G_2$ is semi-weakly CIS.
Similarly,~\eqref{eq2} implies that
${\cal C} :=\{C_1\cup C_2\mid C_1\in {\cal C}_1\,, C_2\in {\cal C}_2\}$
is a collection of strong cliques covering all edges of $G_1*G_2$. (Notice that the assumption that
${\cal C}_1$ (resp.,~${\cal C}_2$) covers all {\it vertices} of $G_1$ (resp.,~$G_2$) is needed so that
the cliques in ${\cal C}$ cover all $V(G_1)$-$V(G_2)$ edges.) Therefore, $G_1*G_2$ is semi-weakly CIS.\qed
\end{proof}

The classes of CIS, semi-weakly CIS and weakly CIS graphs are nested as follows.

\begin{proposition}\label{prop:CIS-sw-CIS-w-CIS}
Every CIS graph is semi-weakly CIS. Every semi-weakly CIS is weakly CIS.
\end{proposition}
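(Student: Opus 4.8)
The plan is to prove the two implications separately, relying on two elementary facts: first, the observation recorded just before Definition~\ref{dfn:CIS} that $G$ is CIS if and only if every maximal clique of $G$ is strong; and second, the \emph{monotonicity} of strongness, namely that any clique containing a strong clique is itself strong (a superset of a clique meeting all maximal stable sets still meets all of them). Both implications then reduce to choosing the right covering families and checking the covering and cross-intersecting conditions, which in turn follow from the basic facts that every edge extends to a maximal clique and every non-edge extends to a maximal stable set.

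For the first implication, suppose $G$ is CIS. I would take $\mathcal{C}$ to be the family of \emph{all} maximal cliques of $G$. This family is edge covering: any edge $xy$ is a clique and hence extends to a maximal clique containing both endpoints. By the characterization above, every member of $\mathcal{C}$ is strong. Thus $\mathcal{C}$ is an edge covering family of strong cliques, and $G$ is semi-weakly CIS by definition.

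For the second implication, suppose $G$ is semi-weakly CIS, witnessed by an edge covering family $\mathcal{C}_0$ of strong cliques. The one technical point is that the definition of weakly CIS requires $\mathcal{C}$ to consist of \emph{maximal} cliques, whereas the members of $\mathcal{C}_0$ need not be maximal. I would remedy this by extending each $C\in\mathcal{C}_0$ to a maximal clique; by monotonicity the extension remains strong, and replacing cliques by supersets preserves the edge covering property. Call the resulting family $\mathcal{C}$. For the stable-set side I would take $\mathcal{S}$ to be the family of all maximal stable sets, which is non-edge covering since every non-edge extends to a maximal stable set. Cross-intersection then holds for free: each clique in $\mathcal{C}$ is strong, hence meets every maximal stable set, i.e.\ every member of $\mathcal{S}$. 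This exhibits the required cross-intersecting pair and shows $G$ is weakly CIS.

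Neither implication presents a genuine obstacle; the content is essentially definitional bookkeeping. The only place demanding a moment's care is the maximality requirement in the second implication, where one must check that enlarging a strong clique to a maximal one keeps it strong---precisely the monotonicity fact---and that enlarging cliques does not destroy the edge covering property.
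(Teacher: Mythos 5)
Your proof is correct and follows essentially the same route as the paper: the family of all maximal cliques witnesses the first implication, and the strong-clique family paired with ${\cal S}(G)$, the family of all maximal stable sets, witnesses the second. The extra step you add---extending each strong clique to a maximal one via monotonicity of strongness---is harmless but not needed under the paper's reading, since the paper defines ``edge covering'' only for families of \emph{maximal} cliques, so the witnessing family in the definition of semi-weakly CIS already consists of maximal cliques.
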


\begin{proof}
If $G$ is CIS then the set of all maximal cliques is an edge covering family of strong cliques.

If $G$ is semi-weakly CIS, with an edge covering family ${\cal C}$ of strong cliques, then
${\cal C}$ and ${\cal S}(G)$, the set of all maximal stable sets of $G$, form
a cross-intersecting pair of families of subsets of $V(G)$ such that
${\cal C}$ is an edge covering collection of maximal cliques
and ${\cal S}(G)$ is a non-edge covering collection of maximal stable sets,
showing that $G$ is weakly CIS.\qed
\end{proof}

Notice that since the CIS property is self-complementary, every CIS graph is also
co-semi-weakly CIS, that is, it admits a non-edge covering family of strong maximal stable sets.
Weakly CIS and semi-weakly CIS graphs were also considered under different names in~\cite{Gur11}.
Replacing in the definition of weakly CIS graphs
each of the conditions ``edge covering'' and ``non-edge covering'' with the weaker condition
``vertex covering'' results in the class of so-called {\em normal graphs}, a generalization of perfect graphs
introduced by K\"orner in~\cite{Koerner73} and
studied in a series of papers~\cite{BW,CKLMS,DeSimoneKoerner99,FachKoerner07,KM,Patakfalvi,Wagler06,Wagler08}.
Clearly, the classes of weakly CIS and normal graphs are related as follows.

\begin{proposition}\label{prop:weakly-CIS-normal}
Every weakly CIS graph is normal.
\end{proposition}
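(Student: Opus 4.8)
The plan is to recycle the pair of families that witnesses the weakly CIS property, adjusting it only so that the two covering conditions are weakened to vertex covering. So suppose $G$ is weakly CIS, witnessed by a cross-intersecting pair $({\cal C},{\cal S})$ in which ${\cal C}$ is an edge covering collection of maximal cliques and ${\cal S}$ is a non-edge covering collection of maximal stable sets. First I would check that ${\cal C}$ already covers every non-isolated vertex: if $v$ has a neighbor $u$, then the edge $uv$ lies in some $C\in{\cal C}$, so $v\in C$. Dually, ${\cal S}$ covers every vertex that is not universal (that is, not adjacent to all other vertices), since such a vertex $v$ has a non-neighbor $u$, and the non-edge $\{u,v\}$ lies in some $S\in{\cal S}$, whence $v\in S$.

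The only vertices these two families may fail to cover are therefore the isolated vertices (for ${\cal C}$) and the universal vertices (for ${\cal S}$). Since a singleton $\{v\}$ is a maximal clique whenever $v$ is isolated, and a maximal stable set whenever $v$ is universal, I would enlarge the families to ${\cal C}':={\cal C}\cup\{\{v\}\mid v\text{ isolated in }G\}$ and ${\cal S}':={\cal S}\cup\{\{v\}\mid v\text{ universal in }G\}$. By the observation above, ${\cal C}'$ and ${\cal S}'$ are then vertex covering collections of maximal cliques and of maximal stable sets, respectively.

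It remains to confirm that ${\cal C}'$ and ${\cal S}'$ are still cross-intersecting, which is where the only real subtlety lies. For the original members nothing changes. A newly added singleton $\{v\}$ with $v$ isolated meets every maximal stable set (an isolated vertex lies in all of them), and dually a singleton $\{v\}$ with $v$ universal meets every maximal clique, so each new set cross-intersects all the old sets of the opposite type. The genuinely delicate case, and the main obstacle, is a pair of the form $(\{v\},\{w\})$ with $v$ isolated and $w$ universal: here I expect to argue that a graph on at least two vertices cannot have both an isolated and a universal vertex, because a universal vertex would be adjacent to --- hence a neighbor of --- the supposedly isolated one. Consequently this configuration can occur only when $G=K_1$, in which case $v=w$ and the intersection is non-empty. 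Having dispatched every case, the pair $({\cal C}',{\cal S}')$ witnesses that $G$ is normal.
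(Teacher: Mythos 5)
Your proof is correct and takes the only natural route---recycling the weakly CIS witness pair---which is also what the paper implicitly does: Proposition~\ref{prop:weakly-CIS-normal} is stated there without proof, on the grounds that ``vertex covering'' is a weaker requirement than ``edge covering'' and ``non-edge covering.'' In fact you are more careful than the paper, since that weakening is not literally true at isolated vertices (which no edge covers) and universal vertices (which no non-edge covers); your augmentation by singleton maximal cliques and stable sets, together with the observations that an isolated vertex lies in every maximal stable set, a universal vertex lies in every maximal clique, and the two kinds of vertices cannot both occur unless $G=K_1$, is exactly the patch needed to make the paper's ``clearly'' rigorous.
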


For later use, we also explicitly state the relation between perfect and normal graphs.

\begin{proposition}[K\"orner~\cite{Koerner73}]\label{prop:perfect-normal}
Every perfect graph is normal.
\end{proposition}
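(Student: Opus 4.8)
The plan is to unwind the definition: showing that $G$ is normal amounts to producing a family $\mathcal{C}$ of cliques covering $V(G)$ and a family $\mathcal{S}$ of stable sets covering $V(G)$ that are cross-intersecting, i.e. $C\cap S\neq\emptyset$ for every $C\in\mathcal{C}$ and every $S\in\mathcal{S}$. The two engines I would use are the Perfect Graph Theorem (so that $G$ perfect implies $\overline{G}$ perfect, and every induced subgraph $H$ satisfies $\chi(H)=\omega(H)$) together with the classical lemma of Lov\'asz stating that every perfect graph contains a stable set meeting all of its maximum cliques; passing to the complement, it also contains a clique meeting all of its maximum stable sets. Both facts apply to every induced subgraph of $G$, since perfection is hereditary.

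First I would build a distinguished optimal coloring. Starting from $G_0=G$, I repeatedly let $S_i$ be a stable set of the perfect graph $G_i$ meeting every maximum clique of $G_i$, and set $G_{i+1}=G_i-S_i$. Since $S_i$ hits all maximum cliques, $\omega(G_{i+1})\le\omega(G_i)-1$; as the classes form a proper coloring and $\chi(G)=\omega(G)$, the process stops after exactly $\omega(G)$ steps, the clique number drops by one at each step, and the classes $S_0,\dots,S_{\omega(G)-1}$ partition $V(G)$ into $\omega(G)$ stable sets. A short computation then shows that every maximum clique $K$ of $G$ satisfies $|K\cap S_i|=1$ for each $i$ (the trace $K\cap G_i$ is a maximum clique of $G_i$, which $S_i$ meets in a single vertex), so each maximum clique is a transversal of this coloring and hence meets every color class. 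Taking $\mathcal{S}=\{S_0,\dots,S_{\omega(G)-1}\}$ and $\mathcal{C}$ the family of maximum cliques already yields a cross-intersecting pair in which $\mathcal{S}$ covers $V(G)$; dually, peeling off cliques meeting all maximum stable sets produces a clique cover whose classes are each met by every maximum stable set.

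The hard part will be reconciling these two one-sided constructions into a single cross-intersecting pair in which \emph{both} families cover all of $V(G)$. The obstruction is genuine: with respect to the optimal coloring above, a clique meeting every color class must have size $\omega(G)$, so the maximum cliques alone need not cover a vertex lying in no maximum clique, and one cannot simply enlarge $\mathcal{S}$ without destroying cross-intersection. I would resolve this by induction on $\omega(G)$: using Lov\'asz's lemma, peel off a maximal stable set $S^{\ast}$ meeting every maximum clique, apply the inductive hypothesis to the perfect graph $G-S^{\ast}$ (whose clique number has dropped by one) to obtain cross-intersecting covers of $V(G)\setminus S^{\ast}$, and then carefully augment them with $S^{\ast}$ and with cliques through the vertices of $S^{\ast}$, invoking perfection to ensure that each augmented clique can be extended to meet all previously chosen stable sets. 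Verifying that this augmentation preserves cross-intersection while keeping both families covers of $V(G)$ is the crux of the argument, and the step I expect to demand the most care.
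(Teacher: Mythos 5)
The paper does not actually prove this proposition---it is imported by citation from K\"orner's 1973 paper---so your attempt has to stand on its own, and on its own terms it is incomplete: the step you yourself label ``the crux'' (augmenting the inductive covers of $G-S^{\ast}$ by $S^{\ast}$ and by cliques through the vertices of $S^{\ast}$) is never carried out, and that step is exactly where the content of the theorem lives. Your first two paragraphs are sound: the color classes of an $\omega(G)$-coloring are each met by every maximum clique, and the obstruction you identify (maximum cliques need not cover $V(G)$) is the right one. But the induction on $\omega(G)$ as you set it up does not go through, because the inductive hypothesis hands you an \emph{arbitrary} cross-intersecting pair $(\mathcal{C}',\mathcal{S}')$ on $G-S^{\ast}$, and for $S^{\ast}$ to join the stable family every clique of $\mathcal{C}'$ must be extendable by a vertex of $S^{\ast}$; this can fail for \emph{every} admissible $(\mathcal{C}',\mathcal{S}')$.

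Concretely, let $G$ be the chordal (hence perfect) graph with triangle $\{x,y,z\}$ and pendant path with edges $zw$ and $wu$, and take $S^{\ast}=\{x,u\}$: this is a maximal stable set meeting the unique maximum clique $\{x,y,z\}$, so it is a legitimate choice in your peeling step, and $G-S^{\ast}$ is the path $y$-$z$-$w$. In $G-S^{\ast}$ the only stable set containing $z$ is $\{z\}$, so any stable cover contains $\{z\}$; cross-intersection then forces every clique of $\mathcal{C}'$ to contain $z$, and covering $y$ and $w$ forces $\{y,z\},\{z,w\}\in\mathcal{C}'$. But $\{z,w\}$ has no common neighbor in $S^{\ast}$ ($x$ misses $w$, and $u$ misses $z$), so no augmentation of \emph{any} inductive solution lets $S^{\ast}$ cross-intersect the clique family. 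Indeed the actual witness of normality for this $G$, namely $\mathcal{C}=\{\{x,y,z\},\{w,u\}\}$ and $\mathcal{S}=\{\{x,u\},\{y,w\},\{z,u\}\}$, restricts on $G-S^{\ast}$ to a pair that is \emph{not} cross-intersecting (the stable set $\{z\}$ misses the clique $\{w\}$), so it cannot be produced by your scheme at all. With the alternative choice $S^{\ast}=\{x,w\}$ the augmentation does succeed, which is consistent with the theorem being true---but it shows that your induction requires a criterion for choosing $S^{\ast}$ coordinated with the covers below it (or a strengthened induction hypothesis), and supplying that criterion and proving it always exists is precisely the missing proof.
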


Proposition~\ref{prop:semi-weakly-CIS=gpg} together with
Theorems~\ref{thm:gpg-seq}, \ref{thm:Seq-eq} and \ref{thm:triangle} implies the following result
(which is also easy to prove directly).

\begin{proposition}\label{prop:semi-weakly-CIS-triangle}
Every semi-weakly CIS graph is triangle.
\end{proposition}

We now show the following relation between weakly CIS and weakly triangle graphs.

\begin{proposition}\label{prop:weakly-CIS-cap-weakly-triangle}
Every weakly CIS graph is $\cap$-weakly triangle.
\end{proposition}

\begin{proof}
Let $G$ be a weakly CIS graph, and let ${\cal C}$, ${\cal S}$ be a cross-intersecting pair
of families of subsets of $V(G)$ such that
${\cal C}$ is an edge covering collection ${\cal C}$ of maximal cliques
and ${\cal S}$ is a non-edge covering collection of maximal stable sets.

We claim that ${\cal S}$ has the triangle property.
Indeed, let $S\in{\cal S}$ and let $u,v$ be a pair of adjacent vertices in $V(G)\setminus S$.
Then, there is a $C\in {\cal C}$ containing $u$ and $v$. Since ${\cal C}$ and ${\cal S}$
are cross-intersecting, $C\cap S = \{w\}$ for some $w\in V(G)$. Clearly, $w$ is a common
neighbor of $u$ and $v$ in $S$.
By symmetry, ${\cal C}$ has the co-triangle property, which completes the proof.\qed
\end{proof}

\section{A characterization of split graphs}\label{sec:split}

There is another class of graphs closely related to CIS graphs: the so-called almost CIS graphs.

\begin{definition}\label{dfn:almost-CIS}
We say that a graph $G$  is {\em almost CIS} if
every maximal clique $C$ and every maximal stable set
$S$ in $G$ intersect, except for a unique pair,
$C_0$  and  $S_0$.
\end{definition}

The class of almost CIS graphs was introduced by Boros et al.~in~\cite{BGZ09} and
characterized by Wu~et al.~\cite{WZZ}. The characterization is based on split graphs.

Now we can state the characterization of almost CIS graphs
conjectured by~\cite{BGZ09} and proved in \cite{WZZ}.

\begin{proposition}[Wu et al.~\cite{WZZ}]\label{prop:Wu}
A graph  $G = (V,E)$ is almost CIS if and only if $V = C_0 \cup S_0$, where
$C_0$ is a maximal clique, $S_0$ is a maximal stable set, and  $C_0 \cap S_0 = \emptyset$;
or, in other words, if and only if $G$ is a split graph with
a unique split partition.
\end{proposition}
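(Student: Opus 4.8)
The plan is to prove both directions of the equivalence between ``almost CIS'' and ``split with a unique split partition.''

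\medskip

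\noindent\textbf{The easier direction} ($\Leftarrow$). Suppose $V = C_0\cup S_0$ is a split partition with $C_0$ a maximal clique, $S_0$ a maximal stable set, and $C_0\cap S_0=\emptyset$; assume moreover that this split partition is unique. First I would show that $(C_0,S_0)$ is the unique pair of a maximal clique and a maximal stable set that fails to intersect. That $C_0\cap S_0=\emptyset$ is given, so it suffices to prove every \emph{other} pair $(C,S)$ of a maximal clique and maximal stable set \emph{does} intersect. The key observation is the following dichotomy for any maximal clique $C$: since $S_0$ is a maximal stable set, either $C\cap S_0\neq\emptyset$ (and we are done for this $C$ against $S_0$), or $C\subseteq C_0$, in which case maximality of $C$ forces $C=C_0$. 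An analogous dichotomy holds for maximal stable sets $S$ against $C_0$. So the only pair that can be disjoint is $(C_0,S_0)$ itself, and $G$ is almost CIS. Here I would lean on the uniqueness hypothesis only to rule out a second disjoint pair; the main content is that a maximal clique not equal to $C_0$ must hit $S_0$, which follows from the partition structure.

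\medskip

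\noindent\textbf{The harder direction} ($\Rightarrow$). Suppose $G$ is almost CIS, with $(C_0,S_0)$ the unique non-intersecting pair of a maximal clique and a maximal stable set. I must show $V=C_0\cup S_0$ (the partition is then automatically a split partition, and uniqueness will follow). The inclusion $C_0\cup S_0\subseteq V$ is trivial, so the work is to prove $V\subseteq C_0\cup S_0$, i.e.\ that no vertex lies outside both $C_0$ and $S_0$. Suppose for contradiction there is a vertex $x\notin C_0\cup S_0$. The idea is to extend $\{x\}$ into a maximal clique $C$ and into a maximal stable set $S$, and then argue that the ``extra'' intersections forced by $x$ produce a \emph{second} disjoint pair, contradicting uniqueness. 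Concretely, because $x\notin C_0$ and $C_0$ is maximal, $x$ has a non-neighbor in $C_0$; because $x\notin S_0$ and $S_0$ is maximal, $x$ has a neighbor in $S_0$. I would use these to build a maximal clique $C\ni x$ that avoids $S_0$ and a maximal stable set $S\ni x$ that avoids $C_0$, yielding $(C,S_0)$ and $(C_0,S)$ as candidate disjoint pairs distinct from $(C_0,S_0)$; reconciling all the constraints so that one genuinely new disjoint pair survives is where the argument becomes delicate.

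\medskip

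\noindent\textbf{Main obstacle.} The hard part will be the counting/uniqueness bookkeeping in the forward direction: an external vertex $x$ can be adjacent to some of $C_0$ and $S_0$ in many ways, and I must carefully choose the clique and stable-set extensions so that exactly the right disjointness is forced, avoiding the degenerate cases where the ``new'' pair accidentally coincides with $(C_0,S_0)$ or where both candidate pairs intersect after all. I expect this is precisely the subtle case analysis that Wu et al.\ carry out, and it is the crux of the characterization; the reverse direction and the final verification that the resulting split partition is unique (two distinct split partitions would give two distinct disjoint pairs) are comparatively routine. Since Proposition~\ref{prop:Wu} is quoted from~\cite{WZZ}, I would ultimately cite their argument for the delicate forward implication rather than reproduce the full case analysis here.
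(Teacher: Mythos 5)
First, a point of comparison: the paper does not prove this proposition at all; it is imported verbatim from Wu, Zang and Zhang \cite{WZZ} (the hard direction was conjectured in \cite{BGZ09}). So your decision to cite \cite{WZZ} for the forward implication is consistent with the paper's treatment, but it also means your proposal is a proof plan rather than a proof: the direction ``almost CIS $\Rightarrow$ split with unique partition''---which is precisely the difficult, formerly conjectural content of the result---is left as a sketch (extend an alleged vertex $x \notin C_0 \cup S_0$ to a maximal clique and a maximal stable set and force a second disjoint pair), and you yourself flag that making this work is the crux. As it stands, nothing in that direction is actually established.

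Second, and more importantly, the direction you do claim to complete has a genuine logical gap. Your two dichotomies show only that (a) every maximal clique $C \neq C_0$ meets $S_0$, and (b) every maximal stable set $S \neq S_0$ meets $C_0$. From this you conclude ``so the only pair that can be disjoint is $(C_0,S_0)$ itself,'' but that is a non sequitur: the dichotomies say nothing about a pair $(C,S)$ with $C \neq C_0$ \emph{and} $S \neq S_0$, which is exactly where the content lies. The gap is fillable. Since $V = C_0 \cup S_0$ and a clique meets a stable set in at most one vertex, a maximal clique $C \neq C_0$ satisfies $C \cap S_0 = \{s\}$ for a single vertex $s$, and maximality forces $C = \{s\} \cup (N(s) \cap C_0)$; symmetrically, a maximal stable set $S \neq S_0$ has the form $S = \{c\} \cup (S_0 \setminus N(c))$, where $\{c\} = S \cap C_0$. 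Now if $C \cap S = \emptyset$, then $c \notin C$, so $c \notin N(s)$; but then $s \in S_0 \setminus N(c) \subseteq S$, hence $s \in C \cap S$, a contradiction. Note that this argument uses only the maximality of $C_0$ and $S_0$, not the uniqueness of the split partition, so your appeal to ``uniqueness rules out a second disjoint pair'' is both unexplained and unnecessary at this point. Finally, your parenthetical claim that ``two distinct split partitions would give two distinct disjoint pairs'' is false ($P_3$ has several split partitions and no disjoint pair at all); the correct link is that uniqueness of the split partition is equivalent to both parts being maximal, which is a separate short exchange argument you would also need to supply to connect the two formulations in the statement.
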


Proposition~\ref{prop:perfect-normal} and the fact that split graphs are perfect implies
the following.

\begin{corollary}\label{cor:almost-CIS-normal}
Every almost CIS graph is normal.
\end{corollary}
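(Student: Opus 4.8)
The plan is to assemble the three ingredients flagged in the statement into a short chain of implications, with no computation required. First I would invoke Proposition~\ref{prop:Wu}: every almost CIS graph $G$ is a split graph (in fact a split graph with a unique split partition, but for the present purpose we only need that it is split). This is the one place where the almost CIS hypothesis is actually used; everything after it is a statement about split graphs.

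Next I would appeal to the classical fact that every split graph is perfect. This is the only ingredient not stated verbatim in the excerpt, but it is standard: split graphs are exactly the $\{2K_2,C_4,C_5\}$-free graphs, so both a split graph $G$ and its complement $\overline{G}$ contain no induced cycle of length at least four, i.e.\ both are chordal; since chordal graphs are perfect and perfection is preserved under complementation, $G$ is perfect. Finally, Proposition~\ref{prop:perfect-normal} (K\"orner's result that every perfect graph is normal) upgrades perfection to normality, and composing the three implications
\[
\text{almost CIS} \;\Longrightarrow\; \text{split} \;\Longrightarrow\; \text{perfect} \;\Longrightarrow\; \text{normal}
\]
gives the corollary.

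Since every link in this chain is either an already-proved proposition from the excerpt or a textbook fact, there is no genuine obstacle here; the argument is essentially a single line. The only thing to guard against is the temptation to prove normality directly by exhibiting a vertex-covering pair of cross-intersecting clique and stable-set families (as in the definitions preceding Proposition~\ref{prop:weakly-CIS-normal}); that route is available but strictly more work than routing through perfection, and is unnecessary given Propositions~\ref{prop:Wu} and~\ref{prop:perfect-normal}.
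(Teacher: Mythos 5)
Your proof is correct and follows exactly the paper's own route: the paper derives the corollary from Proposition~\ref{prop:Wu} (almost CIS $\Rightarrow$ split), the standard fact that split graphs are perfect, and Proposition~\ref{prop:perfect-normal} (perfect $\Rightarrow$ normal). Your additional justification of split $\Rightarrow$ perfect via chordality of $G$ and $\overline{G}$ is a fine elaboration of what the paper treats as a known fact.
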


We find it useful to introduce a name for the graph property, which is the union of CIS and almost CIS graphs.

\begin{definition}\label{dfn:quasi-CIS}
We say that a graph $G$  is {\em quasi CIS} if it is CIS or almost CIS.
\end{definition}

We also recall the following property of split graphs.

\begin{proposition}[Boros et al.~\cite{BGZ09}]\label{prop:split-quasi-CIS}
Every split graph is quasi CIS.
\end{proposition}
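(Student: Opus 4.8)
The plan is to prove Proposition~\ref{prop:split-quasi-CIS} by taking an arbitrary split graph $G$ with a split partition $(C,S)$ and analyzing its maximal cliques and maximal stable sets directly. The key structural fact I would rely on is that in a split graph, every maximal clique and every maximal stable set is ``almost'' $C$ or $S$, differing only by the status of a few vertices on the boundary between the clique side and the stable side. First I would observe that, because $C\cup S=V$ with $C$ a clique and $S$ a stable set, every maximal stable set of $G$ contains $C$-side vertices only in a very restricted way: a maximal stable set consists of at most one vertex from $C$ (since $C$ is a clique) together with a maximal set of $S$-vertices non-adjacent to it. Dually, every maximal clique consists of all of $C$ (or a large portion of it) together with at most one vertex from $S$.

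Next I would reduce to understanding when a maximal clique $C'$ and a maximal stable set $S'$ can fail to intersect. Since $C'$ and $S'$ can share at most one vertex (a clique and a stable set meet in at most one point), the CIS/almost-CIS dichotomy hinges on counting the non-intersecting pairs. The main case analysis is: if the split partition $(C,S)$ itself satisfies additional fullness conditions (e.g.\ $C$ is a maximal clique and $S$ is a maximal stable set with $C\cap S=\emptyset$), then $(C,S)$ is exactly one non-intersecting pair, and I would argue that every other maximal clique picks up a vertex of $S$ and every other maximal stable set picks up a vertex of $C$ in a way that forces all remaining pairs to intersect, yielding the almost-CIS property. When the partition is not tight in this way, the adjustments on the boundary force all pairs to intersect, giving a CIS graph.

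The cleanest route, though, is to invoke Proposition~\ref{prop:Wu} in one direction and dispose of the rest combinatorially. Specifically, I would split into two cases according to whether the split graph $G$ has a unique split partition. If the split partition is unique, then Proposition~\ref{prop:Wu} immediately gives that $G$ is almost CIS, hence quasi CIS. If $G$ has at least two distinct split partitions, I would show $G$ is CIS: the existence of multiple split partitions means the boundary between clique and stable vertices is ``flexible,'' and I would argue that any maximal clique $C'$ and maximal stable set $S'$ must intersect, because a potential non-intersecting pair $(C',S')$ would force $(C',S')$ to be the essentially unique split partition, contradicting non-uniqueness. Concretely, a disjoint maximal clique and maximal stable set whose union is $V$ is itself a split partition, and one shows that in the CIS-failure scenario this partition is forced to be unique.

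The main obstacle I anticipate is the bookkeeping in the second case: making precise the claim that ``a disjoint non-intersecting pair of a maximal clique and a maximal stable set must cover all of $V$ and be the unique split partition.'' The crux is the lemma that if $C'$ is a maximal clique and $S'$ is a maximal stable set with $C'\cap S'=\emptyset$, then $C'\cup S'=V(G)$ (any vertex outside both would contradict maximality of one of them, using the split structure), so $(C',S')$ is a split partition with empty intersection. Establishing uniqueness of such a partition from the assumption of a single non-intersecting pair, and conversely deriving a second split partition when two such pairs exist, is the delicate step where I would need to carefully track which vertices can migrate between the clique and stable sides. Once that lemma is in hand, the proposition follows by combining it with Proposition~\ref{prop:Wu}.
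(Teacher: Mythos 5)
The paper does not actually prove Proposition~\ref{prop:split-quasi-CIS}; it quotes it from~\cite{BGZ09}. Judged on its own terms, your plan has the right shape (show there can be at most one non-intersecting maximal-clique/maximal-stable-set pair), but it contains a genuine gap: all of the combinatorial content is concentrated precisely in the step you explicitly defer, namely ``establishing uniqueness of such a partition from the assumption of a single non-intersecting pair \ldots is the delicate step where I would need to carefully track which vertices can migrate.'' Without that step, Case~2 of your argument is unproven. Moreover, the one-line justification you offer for your covering lemma does not work as stated: for a vertex $v\notin C'\cup S'$, maximality of $C'$ only yields a non-neighbor of $v$ in $C'$, and maximality of $S'$ only yields a neighbor of $v$ in $S'$; these two facts are not in conflict, so no contradiction follows ``from maximality of one of them'' alone. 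The split structure has to be used in a more pointed way.

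Here is the missing argument, which also shows that your detour through Proposition~\ref{prop:Wu} and the uniqueness of split partitions is unnecessary. Fix any split partition $(C,S)$ and suppose $C'$ is a maximal clique, $S'$ a maximal stable set, and $C'\cap S'=\emptyset$. Since $S$ is stable, $|C'\cap S|\le 1$; since $C$ is a clique, $|S'\cap C|\le 1$. If $C'\subseteq C$, then $C'=C$ by maximality of $C'$ (any vertex of $C\setminus C'$ could be added); symmetrically, if $S'\subseteq S$ then $S'=S$. Now consider the cases. If $C'=C$ and $S'\cap C=\{c\}$, then $c\in C'\cap S'$, contradicting disjointness; symmetrically if $S'=S$ and $C'\cap S=\{s\}$. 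If $C'\cap S=\{s\}$ and $S'\cap C=\{c\}$, then $c\notin C'$ by disjointness, and since $c$ is adjacent to every vertex of $C'\cap C$, maximality of $C'$ forces $c$ to be non-adjacent to $s$; on the other hand $s\notin S'$ (as $s\in C'$), and since $s$ is non-adjacent to every vertex of $S'\cap S$, maximality of $S'$ forces $s$ to be adjacent to $c$ --- a contradiction. Hence the only possibility is $C'=C$ and $S'=S$. So every non-intersecting pair equals the fixed partition $(C,S)$; in particular there is at most one such pair, and $G$ is CIS (if there is none) or almost CIS (if there is exactly one), i.e., quasi CIS. This short case analysis is exactly the ``bookkeeping'' your proposal points at but never carries out, and once it is in hand, neither Proposition~\ref{prop:Wu} nor the case distinction on the number of split partitions is needed.
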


We will now give another characterization of split graphs. We start with the following proposition.

\begin{proposition}
Every CIS split graph is $\cap$-edge simplicial.
\end{proposition}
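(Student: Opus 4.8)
The plan is to fix a split partition $(C,S)$ of $G$ (with $C$ a clique, $S$ a stable set, $C\cup S=V(G)$ and $C\cap S=\emptyset$) and to exhibit, for every edge of $G$, a simplicial clique containing it. Since the class of CIS split graphs is self-complementary (both the CIS property and the split property are self-complementary), it suffices to prove that $G$ is edge simplicial: applying this to $\overline{G}$, which is again a CIS split graph, shows that $G$ is co-edge simplicial, and hence $\cap$-edge simplicial.

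First I would dispose of the edges joining $C$ and $S$. If $uv$ is an edge with $u\in C$ and $v\in S$, then $N(v)\subseteq C$, since $v$ has no neighbor in the stable set $S$; thus $N(v)$ is a clique, $v$ is a simplicial vertex, and $N[v]$ is a simplicial clique containing both $u$ and $v$. This step uses neither the CIS hypothesis nor any case analysis, and there are no edges inside $S$.

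The heart of the argument is to cover all edges lying inside $C$ by a single simplicial clique that contains $C$, and this is where CIS enters. The key observation is that $C$ and $S$ cannot be simultaneously maximal: if $C$ were a maximal clique and $S$ a maximal stable set, then $C$ and $S$ would form a maximal clique and a maximal stable set that are disjoint, contradicting the definition of a CIS graph. I would therefore split into two cases. If $C$ is not a maximal clique, then some $v\in S$ is adjacent to all of $C$, so $N(v)=C$ and $N[v]=C\cup\{v\}$ is a simplicial clique containing $C$. Otherwise $C$ is maximal, so by the observation $S$ is not maximal, which means some $u\in C$ has no neighbor in $S$; then $N(u)=C\setminus\{u\}$, so $u$ is simplicial and $N[u]=C$. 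In either case every edge inside $C$ lies in this simplicial clique, completing the proof that $G$ is edge simplicial.

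The main obstacle is precisely the dichotomy in the previous paragraph: one must recognize that CIS forbids $C$ and $S$ from both being maximal, and then extract, from the failure of maximality of $C$ (or of $S$), a vertex whose closed neighborhood is a simplicial clique spanning all of $C$. Once this is in place, the remaining checks---that $N[v]$ and $N[u]$ are genuinely simplicial and that they cover the relevant edges---are routine, and the self-complementarity remark upgrades \emph{edge simplicial} to \emph{$\cap$-edge simplicial} at no additional cost.
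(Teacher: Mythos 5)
Your proof is correct and follows essentially the same route as the paper's: cover edges meeting $S$ by the simplicial cliques $N[x]$, $x\in S$, use the CIS property to find one simplicial clique containing all of $C$, and invoke self-complementarity of CIS and split to pass to $\overline{G}$. The only cosmetic difference is that the paper first normalizes the split partition so that $S$ is a maximal stable set (making $C$ automatically non-maximal, hence one case), whereas you keep an arbitrary partition and handle the two cases ``$C$ not maximal'' and ``$S$ not maximal'' separately.
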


\begin{proof}
Let $G$ be a CIS split graph. Fix a split partition $(C,S)$ of $G$ such that $S$ is a maximal stable set.
Since $G$ is CIS, $C$ is not a maximal clique. Therefore, there exists a vertex $v\in S$ that is adjacent to all vertices in $C$. But then every edge $xy\in E(G)$ with $x,y\in C$ is contained in the simplicial clique $N[v]$.
Since every edge with an endpoint, say $x$, in $S$ is contained in the simplicial clique $N[x]$, this implies that $G$ is edge simplicial.

Since both split and CIS are self-complementary properties, applying the above argument to the complement of $G$
shows that $G$ is co-edge simplicial, and thus $\cap$-edge simplicial.\qed
\end{proof}

\begin{proposition}\label{prop:cap-es-split}
Every $\cap$-edge simplicial graph is split.
\end{proposition}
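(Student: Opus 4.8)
The plan is to prove the statement through forbidden induced subgraphs. By the F\"oldes--Hammer characterization~\cite{FH77}, a graph is split if and only if it contains no induced $2K_2$, $C_4$, or $C_5$, so it suffices to exclude these three from every $\cap$-edge simplicial graph $G$. Since the $\cap$-edge simplicial property is self-complementary and $2K_2=\overline{C_4}$, once we rule out induced $C_4$'s in \emph{every} $\cap$-edge simplicial graph, the same statement applied to $\overline{G}$ rules out induced $2K_2$'s; and because $\overline{C_5}=C_5$, only the $C_4$ and $C_5$ cases require genuine work.

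The crux is a local lemma that holds in an arbitrary graph. Call a vertex $p$ \emph{simplicial} if $N_G(p)$ is a clique (equivalently, $N_G[p]$ is a simplicial clique), and call $s$ \emph{co-simplicial} if it is simplicial in $\overline{G}$, i.e.\ its set of non-neighbors $\overline{N}_G(s)$ is independent in $G$. The lemma asserts: if $p$ is simplicial and $s$ is co-simplicial, then $N_G(p)\subseteq N_G[s]$. I would prove it in three lines: given $x\in N_G(p)$ with $x\notin N_G[s]$, the vertex $x$ lies in $\overline{N}_G(s)$; if $p$ also lay in $\overline{N}_G(s)$ then the edge $xp$ would violate the independence of $\overline{N}_G(s)$, so $p=s$ or $p\sim s$, and in either case $x$ and $s$ both lie in the clique $N_G(p)$, forcing $x\sim s$, a contradiction. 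Note that this uses only that $p$ is simplicial and $s$ is co-simplicial, not edge-simpliciality per se; edge-simpliciality of $G$ and of $\overline{G}$ enters only to \emph{produce} such witnesses.

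With the lemma in hand I would handle $C_4$ and $C_5$ uniformly. Suppose $G$ is $\cap$-edge simplicial and contains an induced $C_4$ on $a,b,c,d$ (edges $ab,bc,cd,da$). Edge-simpliciality of $G$ applied to the edge $ab$ yields a simplicial clique $N_G[p]\ni a$; since $N_G(a)\supseteq\{b,d\}$ is not a clique, $p\neq a$, hence $a\in N_G(p)$. Edge-simpliciality of $\overline{G}$ applied to the non-edge $ac$ yields a co-simplicial $s$ with $a,c\in\overline{N}_G[s]$; as $a\neq c$, at least one of $a,c$ differs from $s$, say $a$, so $a\in\overline{N}_G(s)$, i.e.\ $a\notin N_G[s]$. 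This contradicts the lemma. The $C_5$ case is identical: take the simplicial apex $p$ of an edge $v_1v_2$ and the co-simplicial witness $s$ of the non-edge $v_1v_3$, noting that $v_1$ is not simplicial since $N_G(v_1)\supseteq\{v_2,v_5\}$ is not a clique, so that $v_1\in N_G(p)\setminus N_G[s]$. Combining the $C_4$, $C_5$, and (complementary) $2K_2$ exclusions with~\cite{FH77} finishes the proof.

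The main obstacle is conceptual rather than computational. The tempting approach of chasing simplicial cliques witness-by-witness around the cycle spawns ever more vertices and does not visibly terminate; the real content is to isolate the single structural incompatibility between a simplicial vertex of $G$ and a simplicial vertex of $\overline{G}$ captured by the lemma. A second point to emphasize is that both halves of the hypothesis are essential and cannot be weakened to a mere consequence such as being semi-weakly CIS: an edge-simplicial graph alone may contain an induced $C_4$ (for instance, the graph obtained by attaching a triangle along each edge of a $C_4$ is edge simplicial), so it is precisely the interaction of the two simplicial structures, through the lemma, that forbids $C_4$. Finally, one should verify the harmless choice of which endpoint of the non-edge plays the role of $x$, guaranteeing $x\neq s$.
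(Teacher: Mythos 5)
Your proof is correct, but it takes a genuinely different route from the paper's. The paper argues constructively: it selects one simplicial vertex $u_i$ from each simplicial clique of $G$ and one co-simplicial vertex $v_j$ from each co-simplicial stable set, shows these form a stable set $S$ and a clique $C$, proves every remaining vertex $w$ satisfies $C\subseteq N(w)$ or $N(w)\cap S=\emptyset$, and then checks that the resulting classes $K$ and $L$ are a clique and a stable set, producing an explicit split partition $(C\cup K,(S\cup L)\setminus(C\cup K))$. You instead invoke the F\"oldes--Hammer forbidden-subgraph characterization~\cite{FH77}, reduce to excluding $C_4$ and $C_5$ via self-complementarity of the $\cap$-edge simplicial property, and derive both exclusions from one clean local lemma: if $p$ is simplicial and $s$ is co-simplicial, then $N_G(p)\subseteq N_G[s]$. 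Your lemma is correct (it is in spirit close to the paper's step showing that no vertex can be adjacent to some $u_i$ yet non-adjacent to some $v_j$), and your argument is shorter and more local; what it costs is the use of the F\"oldes--Hammer theorem as a black box, whereas the paper's proof is self-contained and yields the split partition explicitly, which fits the surrounding section (where the split structure itself is the object of interest). One point you should tighten: in the $C_4$ (resp.\ $C_5$) case, the phrase ``say $a$'' quietly assumes that whichever endpoint of the non-edge differs from $s$ has its own simplicial witness; this is legitimate, but only because \emph{both} endpoints ($a$ and $c$, resp.\ $v_1$ and $v_3$) are non-simplicial vertices lying on edges of the cycle, so each admits a witness $p$ with that endpoint in $N_G(p)$ --- the witness must therefore be chosen \emph{after} deciding which endpoint is distinct from $s$, not fixed in advance as your write-up suggests.
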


\begin{proof}
Let $G$ be a $\cap$-edge simplicial graph. Let
$\{C_1,\ldots, C_k\}$ be the set of all simplicial cliques of $G$, and
$\{S_1,\ldots, S_\ell\}$ be the set of all co-simplicial stable sets of $G$
(where a stable set is co-simplicial in $G$ if it is a simplicial clique in $\overline{G}$).
Choose a simplicial vertex $u_i\in C_i$ and a co-simplicial vertex
$v_j\in S_j$ for all $i\in [k]:= \{1,\ldots,k\}$ and all $j\in [\ell]$.
Let $S = \{u_1,\ldots, u_k\}$ and $C = \{v_1,\ldots, v_\ell\}$.

We claim that $S$ is a stable set. Indeed, no two vertices in $S$ are adjacent,
since otherwise they would define the same simplicial clique.
Similarly, $C$ is a clique.

Furthermore, every vertex $w\in V(G)\setminus (C\cup S)$ satisfies either $C\subseteq N(w)$ or $S\cap N(w) = \emptyset$.
Indeed, if there exists a vertex $w\in V(G)\setminus (C\cup S)$ that is adjacent to some
$u_j\in S$ and non-adjacent to some $v_j\in C$, then  $v_j\neq u_i$; moreover, if $v_j$ is adjacent to $u_i$ then $u_i$ cannot be simplicial, while if
$v_j$ is non-adjacent to $u_i$ then $v_j$ cannot be co-simplicial. In each case, we get a contradiction.

Thus, we can define two sets $K$ and $L$ where
$$K = \{w\in V(G)\setminus (C\cup S)\mid C\subseteq N(w)\}$$
and
$$L = \{w\in V(G)\setminus (C\cup S)\mid N(w)\cap S = \emptyset\}$$
such that $V(G) = (C\cup K)\cup (S\cup L)$.

Let us now show that $K$ is a clique and $L$ is a stable set. This will imply that
$(C\cup K, (S\cup L)\setminus (C\cup K))$ is a split partition of $G$.
Suppose that two distinct vertices $w, w'\in K$ are non-adjacent.
Since $G$ is co-edge simplicial, there exists a co-simplicial vertex $v_j\in C$ such that
the corresponding co-simplicial stable set $S_j$ contains $w$ and $w'$. Since $v_j\in S_j$, vertex $w$ is non-adjacent to
$v_j$, contrary to the fact that $v_j\in C\subseteq N(w)$.
Similarly, we can show that $L$ is a stable set, which completes the proof.
\qed
\end{proof}

From the above three propositions, we obtain the following characterization of split graphs.

\begin{theorem}\label{thm:split-characterization}
A graph $G$ is split if and only if $G$ is almost CIS or $\cap$-edge simplicial.
\end{theorem}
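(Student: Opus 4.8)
The plan is to prove the biconditional by assembling the three propositions immediately preceding the theorem statement, together with the fact (Proposition~\ref{prop:split-quasi-CIS}) that every split graph is quasi CIS, meaning CIS or almost CIS. The key observation is that ``almost CIS'' and ``$\cap$-edge simplicial'' are precisely the two mutually exclusive regimes into which split graphs fall: a split graph is either CIS (in which case we show it is $\cap$-edge simplicial) or it is almost CIS.

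For the forward direction, suppose $G$ is split. By Proposition~\ref{prop:split-quasi-CIS}, $G$ is CIS or almost CIS. If $G$ is almost CIS, we are immediately done. If instead $G$ is CIS, then the first of the three propositions above (``Every CIS split graph is $\cap$-edge simplicial'') gives that $G$ is $\cap$-edge simplicial. In either case $G$ satisfies the disjunction ``almost CIS or $\cap$-edge simplicial,'' which is what the forward implication requires.

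For the reverse direction, suppose $G$ is almost CIS or $\cap$-edge simplicial. If $G$ is almost CIS, then by Proposition~\ref{prop:Wu} it is a split graph (with a unique split partition), so it is split. If $G$ is $\cap$-edge simplicial, then Proposition~\ref{prop:cap-es-split} directly yields that $G$ is split. Since both disjuncts imply that $G$ is split, the reverse implication follows, and the two directions together establish the equivalence. \qed

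I expect no serious obstacle here, since all the substantive work is already done in the three propositions and in Propositions~\ref{prop:Wu} and~\ref{prop:split-quasi-CIS}; the theorem is essentially a bookkeeping combination of them. The only point requiring a moment's care is making sure the quasi-CIS dichotomy is invoked correctly in the forward direction: one must note that the CIS case is handled by the first proposition and the almost-CIS case is already in the desired form, so that the two cases exhaust the possibilities guaranteed by Proposition~\ref{prop:split-quasi-CIS}. I would also double-check that nothing forces the two disjuncts on the right-hand side to be exclusive, since the statement is a logical ``or'' and a graph could in principle satisfy both, which causes no difficulty for either implication.
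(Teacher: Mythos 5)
Your proof is correct and follows exactly the same route as the paper, which simply assembles Proposition~\ref{prop:split-quasi-CIS} (split implies CIS or almost CIS), the proposition that every CIS split graph is $\cap$-edge simplicial, Proposition~\ref{prop:Wu} (almost CIS implies split), and Proposition~\ref{prop:cap-es-split} ($\cap$-edge simplicial implies split). The paper leaves this assembly implicit (``From the above three propositions, we obtain\dots''), and your write-up is precisely the intended argument made explicit.
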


\section{Random and explicit constrictions of split $\cap$-edge simplicial non-CIS graphs}\label{sec:split-constructions}

We now show that in a natural random model split graphs are asymptotically almost surely $\cap$-edge simplicial but not CIS.

Consider the following construction of a random split graph $G_{k,\ell}$ on $k+\ell$ vertices, where $k$ and $\ell$ are positive integers. Take two disjoint sets $C$ and $S$ with $k$ and $\ell$ vertices, respectively.
Make $C$ a clique and $S$ a stable set. For every pair $(c,s)$ where $c\in C$ and $s\in S$, make $c$ and $s$ adjacent with probability $1/2$. Make all these $k\ell$ choices independently of each other.

\begin{proposition}\label{lem:random-split}
For every $\epsilon>0$ there exist integers $k_0$ and $\ell_0$ such that
for all $k\ge k_0$ and for all $\ell\ge \ell_0$, graph $G_{k,\ell}$ has all of
the following properties with probability at least $1-\epsilon$:
\begin{enumerate}[(1)]
  \item $S$ is a maximal stable set.
  \item $C$ is a maximal clique.
  \item Every two vertices of $C$ have a common neighbor in $S$.
  \item Every two vertices of $S$ have a common non-neighbor in $C$.
\end{enumerate}
\end{proposition}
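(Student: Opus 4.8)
The plan is to treat each of the four properties as the complement of a ``bad event,'' bound the probability of each bad event separately, and finish with a union bound. Before doing so I would exploit the complementation symmetry of the model: the complement $\overline{G_{k,\ell}}$ has exactly the distribution of $G_{\ell,k}$, since a clique becomes a stable set, a stable set becomes a clique, and flipping an independent $\mathrm{Bernoulli}(1/2)$ cross-edge leaves its distribution unchanged. Under this correspondence (which also swaps the roles of $C$ and $S$ and of $k$ and $\ell$), property (1) of $G_{k,\ell}$ turns into property (2) of $G_{\ell,k}$, and property (3) turns into property (4). Hence it suffices to estimate the failure probabilities of (1) and (3), and read off (2) and (4) by interchanging $k$ and $\ell$.

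For property (1), $S$ fails to be a maximal stable set precisely when some $c\in C$ has no neighbour in $S$; a fixed $c$ has no neighbour in $S$ with probability $2^{-\ell}$ because its $\ell$ potential edges to $S$ are chosen independently, so a union bound over the $k$ vertices of $C$ gives a failure probability of at most $k\,2^{-\ell}$. For property (3), I would fix a pair $c,c'\in C$: a given $s\in S$ is a common neighbour of $c$ and $c'$ with probability $1/4$, and these events are independent over the $\ell$ vertices of $S$, so the pair has no common neighbour with probability $(3/4)^{\ell}$; summing over the $\binom{k}{2}$ pairs bounds the failure probability by $\binom{k}{2}(3/4)^{\ell}$. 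By the symmetry above, property (2) fails with probability at most $\ell\,2^{-k}$ and property (4) with probability at most $\binom{\ell}{2}(3/4)^{k}$.

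Combining the four estimates by the union bound, the probability that $G_{k,\ell}$ violates at least one of (1)--(4) is at most
\[
k\,2^{-\ell}+\ell\,2^{-k}+\binom{k}{2}(3/4)^{\ell}+\binom{\ell}{2}(3/4)^{k},
\]
and it remains to choose $k_0,\ell_0$ making this quantity smaller than $\epsilon$. The individual estimates are routine; the one point that needs care, and which I expect to be the main obstacle, is that the two pairs of terms pull in opposite directions. The first and third terms are small only when $\ell$ is large compared with $\log k$, whereas the second and fourth require $k$ to be large compared with $\log \ell$. I would therefore choose the thresholds so that the two parameters remain comparable as they grow—it is immediate that every term tends to $0$ along $k=\ell\to\infty$, and more generally whenever each of $k,\ell$ is large relative to the logarithm of the other—and the bulk of the verification is simply checking that a single pair $(k_0,\ell_0)$ drives the whole sum below $\epsilon$ on the intended range.
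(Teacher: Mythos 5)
Your proposal follows exactly the same route as the paper's proof: the same four term-by-term estimates ($k\,2^{-\ell}$, $\ell\,2^{-k}$, $\binom{k}{2}(3/4)^{\ell}$, $\binom{\ell}{2}(3/4)^{k}$) combined by a union bound. Your use of the complementation symmetry $\overline{G_{k,\ell}}\sim G_{\ell,k}$ is a clean way of making precise what the paper dispatches with ``by a similar argument''; otherwise the two arguments are identical.

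The substantive remark concerns your final paragraph, and it cuts both ways. Your worry is legitimate---in fact more serious than you make it. Under the literal quantification of the statement (a single pair $(k_0,\ell_0)$ valid for all $k\ge k_0$ and all $\ell\ge \ell_0$ independently), no choice of thresholds can work: fix $\ell=\ell_0$ and let $k\to\infty$; then $S$ fails to be maximal with probability $1-(1-2^{-\ell_0})^{k}\to 1$, so property (1) itself---not merely the union-bound estimate---fails asymptotically almost surely. Consequently your last sentence, that one need only check that a single pair $(k_0,\ell_0)$ drives the sum below $\epsilon$ on the whole product range, cannot be carried out; the same defect sits in the paper's own conclusion (``this expression is smaller than $\epsilon$ for all large enough values of $k$ and $\ell$''). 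The correct repair is the one you hint at but do not commit to: the conclusion holds when $k$ and $\ell$ grow in a coupled regime, e.g.\ along $k=\ell\to\infty$, or more generally whenever each parameter is subexponential in the other, and this coupled reading is what the subsequent corollary on $G_{k,\ell}$ being asymptotically almost surely $\cap$-edge-simplicial but not CIS actually requires. So: your estimates are exactly the paper's, and your diagnosis of the weak point is sharper than the paper's treatment of it; only your proposed patch should be stated as a restriction on the joint growth of $(k,\ell)$ rather than as the existence of a single threshold pair.
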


\begin{proof}
Let us denote by $A_S$, $A_C$, $A_{CS}$ and $A_{SC}$ the
four random events associated with properties $(1)$--$(4)$ above (in this order).
We need to show that the probability of the event
$A_SA_CA_{CS}A_{SC}$ is arbitrarily close to $1$ for all
large enough values of $k$ and $\ell$,
or, equivalently, that $\Pr(\overline{A_S}\cup \overline{A_C}\cup \overline{A_{CS}}\cup \overline{A_{SC}})$ is arbitrarily small for all large enough values of $k$ and $\ell$,
where $\overline{X}$ denotes the complementary event of
an event $X$.

Fix some positive integers $k,\ell$. First, notice that $S$ is {\it not} a maximal stable set in $G_{k,\ell}$ if and only if there exists a vertex in $C$ that is non-adjacent to all vertices in $S$. The probability of this event is at most the sum, over all $c\in C$, of the probability that vertex $c$ is non-adjacent to all vertices in $S$, which is $\left(\frac{1}{2}\right)^\ell$. Hence, $\Pr(\overline{A_S})\le k\left(\frac{1}{2}\right)^\ell$.
By a similar argument, we obtain $\Pr(\overline{A_C})\le \ell\left(\frac{1}{2}\right)^k$.

Now, let us estimate the probability $\Pr(\overline{A_{CS}})$ of the event
that there exists a pair of vertices $c,c'\in C$ that have no common neighbor in $S$.
This probability is not more than the sum, over all unordered pairs of distinct vertices $c,c'\in C$,
of the probability $p_{cc'}$ that $c$ and $c'$ have no common neighbor in $S$.
The event that $c$ and $c'$ have no common neighbor in $S$
is the intersection of $\ell$ independent events that $c$ and $c'$ are non-adjacent to some vertex $s$, over all $s\in S$.
For a fixed $s\in S$, the probability that $c$ and $c'$ are non-adjacent to $s$,
is $1-\left(\frac{1}{2}\right)^2 = \frac{3}{4}$. Hence,
$$p_{cc'} = \left(\frac{3}{4}\right)^\ell~\textrm{~~~~and consequently~~~~}\Pr(\overline{A_{CS}})\le {k\choose 2}\left(\frac{3}{4}\right)^\ell\,.$$
By a similar argument, we obtain
$\Pr(\overline{A_{SC}})\le {\ell\choose 2}\left(\frac{3}{4}\right)^k\,.$
Therefore,
\begin{eqnarray*}
\Pr(\overline{A_S}\cup \overline{A_C}\cup \overline{A_{CS}}\cup \overline{A_{SC}})&\le & \Pr(\overline{A_S})+\Pr(\overline{A_C})+\Pr(\overline{A_{CS}})+\Pr(\overline{A_{SC}})\\
& \le & k\left(\frac{1}{2}\right)^\ell+\ell\left(\frac{1}{2}\right)^k+{k\choose 2}\left(\frac{3}{4}\right)^\ell+{\ell\choose 2}\left(\frac{3}{4}\right)^k\,.\\
\end{eqnarray*}
For every positive $\epsilon$, this expression is smaller than $\epsilon$ for all
large enough values of $k$ and $\ell$.  This completes the proof. \qed
\end{proof}

Notice that in a split graph $G$ with a split partition $(C,S)$, every vertex in $S$ is simplicial. In particular, if every two vertices of $C$ have a common neighbor in $S$, then every edge in $G$ is contained in a simplicial clique.
Moreover, if $S$ is a maximal stable set and $C$ is a maximal clique then $G$ is not CIS.
Hence, Lemma~\ref{lem:random-split} has the following corollary.

\begin{corollary}\label{cor:random}
Asymptotically almost surely, $G_{k,\ell}$ is $\cap$-edge-simplicial but not CIS.
\end{corollary}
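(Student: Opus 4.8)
The plan is to read off Corollary~\ref{cor:random} directly from Lemma~\ref{lem:random-split} together with the two structural observations stated immediately before the corollary, so almost all the work has already been done. First I would invoke Lemma~\ref{lem:random-split}: fix any $\epsilon>0$, obtain the thresholds $k_0,\ell_0$, and note that for all $k\ge k_0$ and $\ell\ge \ell_0$ the random graph $G_{k,\ell}$ simultaneously satisfies properties (1)--(4) with probability at least $1-\epsilon$. Since $\epsilon$ is arbitrary, this says precisely that $G_{k,\ell}$ satisfies (1)--(4) asymptotically almost surely (a.a.s.), so it suffices to argue that (1)--(4) deterministically imply the desired conclusion.

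Next I would establish the ``edge simplicial'' half. By the first preceding observation, in any split graph with split partition $(C,S)$ every vertex of $S$ is simplicial, so every edge incident to $S$ lies in a simplicial clique; and property (3) guarantees that every edge inside $C$ also lies in a simplicial clique $N[s]$ for a common neighbor $s\in S$. Hence under (1)--(4) the graph $G_{k,\ell}$ is edge simplicial. For the ``$\cap$-edge simplicial'' requirement I would apply the same reasoning to the complement: $\overline{G_{k,\ell}}$ is again a random split graph of the same model with the roles of $C$ and $S$ (clique and stable set) interchanged, and property (4), which asserts that every two vertices of $S$ have a common \emph{non}-neighbor in $C$, is exactly property (3) read in $\overline{G_{k,\ell}}$. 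Thus $\overline{G_{k,\ell}}$ is also edge simplicial a.a.s., giving $\cap$-edge simplicial.

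For the ``not CIS'' half I would use the second preceding observation: properties (1) and (2) say that $S$ is a maximal stable set and $C$ is a maximal clique, and since the split partition satisfies $C\cap S=\emptyset$, the pair $(C,S)$ witnesses a maximal clique and a maximal stable set with empty intersection, so $G_{k,\ell}$ is not CIS. Combining the two halves, under (1)--(4) the graph $G_{k,\ell}$ is $\cap$-edge simplicial but not CIS, and since (1)--(4) hold a.a.s., so does the conclusion.

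I do not anticipate a genuine obstacle here, since the corollary is essentially a packaging of Lemma~\ref{lem:random-split} with two already-stated facts; the only point requiring a little care is the self-complementary bookkeeping in the edge-simplicial argument---namely verifying that the complement of $G_{k,\ell}$ is distributed as a random split graph of the same type and that property (4) is the mirror of property (3) under complementation, so that the edge-simplicial conclusion transfers to $\overline{G_{k,\ell}}$. Making that symmetry explicit is the one step I would state cleanly rather than leave implicit.
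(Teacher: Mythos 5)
Your proposal is correct and follows essentially the same route as the paper: the paper also deduces the corollary by combining Lemma~\ref{lem:random-split} with the observations that in a split graph every vertex of $S$ is simplicial (so property (3) yields edge simpliciality, and property (4) yields it for the complement by symmetry), and that properties (1) and (2) with $C\cap S=\emptyset$ preclude the CIS property. Your explicit remark that $\overline{G_{k,\ell}}$ is a random split graph of the same type with the roles of $C$ and $S$ exchanged is just a cleaner statement of the symmetry the paper leaves implicit.
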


The above proof is not constructive. An explicit infinite family of $\cap$-edge-simplicial non-CIS split graphs
can be obtained as follows.
Let $\Pi$ be a finite projective plane of order $n\ge 2$, defined by a set of $q$ points $P$ and a
set of $q$ lines $L$, where $q=n^2+n+1$; see, e.g.,~\cite{HP73}.
Let us associate to $\Pi$ the split graph $G(\Pi)$
whose vertices are the points and the lines of $\Pi$, the pair $(P,L)$ is a split partition
of $G(\Pi)$ (in particular, $P$ is a clique and $L$ is a stable set), and a vertex $p\in P$
is adjacent to a vertex $\ell\in L$ if and only if the corresponding point and line are incident in $\Pi$.

\begin{proposition}\label{prop:projective}
For every finite projective plane $\Pi$, the split graph $G(\Pi)$ is $\cap$-edge simplicial but not CIS.
\end{proposition}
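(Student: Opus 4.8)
The plan is to follow verbatim the strategy used for the random graphs $G_{k,\ell}$, replacing the probabilistic estimates by exact incidence counts. Writing the split partition of $G(\Pi)$ as $(C,S)=(P,L)$, with the clique $C$ being the points and the stable set $S$ being the lines, I would verify the four deterministic analogues of properties $(1)$--$(4)$ from Proposition~\ref{lem:random-split}. These translate into elementary statements about $\Pi$: property $(1)$, that $L$ is a maximal stable set, amounts to every point lying on at least one line; property $(2)$, that $P$ is a maximal clique, amounts to every line missing at least one point; property $(3)$, that every two points of $P$ share a common neighbor in $L$, is exactly the axiom that two distinct points determine a unique line; and property $(4)$, that every two lines share a common non-neighbor in $P$, amounts to the existence, for any two lines, of a point incident to neither.

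Each of these follows by a one-line count from the parameters of a projective plane of order $n\ge 2$ (namely $q=n^2+n+1$ points and lines, each line carrying $n+1$ points and each point on $n+1$ lines). For $(1)$, each point lies on $n+1\ge 3$ lines; for $(2)$, each line has $n+1$ points while $n+1<q$ for $n\ge 2$; $(3)$ is the defining axiom; and for $(4)$ two lines cover $(n+1)+(n+1)-1=2n+1$ points, and $2n+1<n^2+n+1$ for $n\ge 2$, so some point avoids both. I would then invoke the two structural observations stated just before Corollary~\ref{cor:random}. The observation that in a split graph every vertex of the stable side is simplicial shows that every point–line edge lies in a simplicial clique $N[\ell]$, and property $(3)$ places every point–point edge inside some $N[\ell]$ as well; hence $G(\Pi)$ is edge simplicial. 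Finally, $(1)$ and $(2)$ exhibit the maximal stable set $L$ and the maximal clique $P$ with $P\cap L=\emptyset$, so this pair violates the CIS condition and $G(\Pi)$ is not CIS.

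For the $\cap$ part I would apply the identical argument to $\overline{G(\Pi)}$, whose split partition swaps the two sides: there $L$ becomes the clique, $P$ the stable set, and a point $p$ is adjacent to a line $\ell$ precisely when $p\notin\ell$. The point side is now the stable side, so each $N_{\overline{G}}[p]$ is a simplicial clique, covering every (non-incident) point–line edge; and property $(4)$, read in $\overline{G(\Pi)}$, furnishes for every pair of lines a common neighbor among the points, placing every line–line edge inside some $N_{\overline{G}}[p]$. Thus $\overline{G(\Pi)}$ is edge simplicial, i.e.\ $G(\Pi)$ is co-edge simplicial, and together with the previous paragraph $G(\Pi)$ is $\cap$-edge simplicial. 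The verifications are all routine, so I do not expect a genuine obstacle; the only step requiring care is this complement analysis, where I must correctly convert property $(4)$'s ``common non-neighbor in $G$'' into ``common neighbor in $\overline{G}$'' and confirm that the points are exactly the simplicial (stable) side of $\overline{G(\Pi)}$.
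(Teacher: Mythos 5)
Your proof is correct and follows essentially the same route as the paper: exhibit $P$ as a maximal clique and $L$ as a disjoint maximal stable set (hence not CIS), use the fact that two points determine a line to get edge simpliciality, and use the fact that two lines cannot cover all points to get co-edge simpliciality. The paper's own proof is just a terser statement of exactly these three observations, so your more detailed verification (including the explicit counts and the complement analysis) is a faithful, if expanded, version of it.
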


\begin{proof}
Obviously, $P$ is a maximal clique in $G(\Pi)$ and $L$ is a maximal stable set. Indeed, this follows from the fact that
no line of $\Pi$ contains all points of $\Pi$, and no point belongs to all lines.
Therefore, as $P\cap L= \emptyset$, graph $G(\Pi)$ is not CIS.
Furthermore, $G(\Pi)$ is edge simplicial, since every two points of $\Pi$ belong to a (unique) line, and
co-edge simplicial, since no two lines cover the whole point set of $\Pi$.\qed
\end{proof}

\section{Inclusion relations between considered graph classes}\label{sec:relations}

In this section, we summarize the known inclusion relations between the properties considered in this paper.
Figure~\ref{fig:Hasse} shows the Hasse diagram of the known inclusion relations between these properties,
including their relations to some other well-known self-complementary graph classes (threshold graphs,
cographs, perfect graphs, normal graphs). Recall that notation $\cap$-${\cal C}$
denotes the class $\{G\mid G\in {\cal C} \wedge \overline{G}\in {\cal C}\}$, while
$\cup$-${\cal C}$ denotes the class $\{G\mid G\in {\cal C} \vee \overline{G}\in {\cal C}\}$.

\begin{figure}[h!]
\begin{center}
\includegraphics[width=\textwidth]{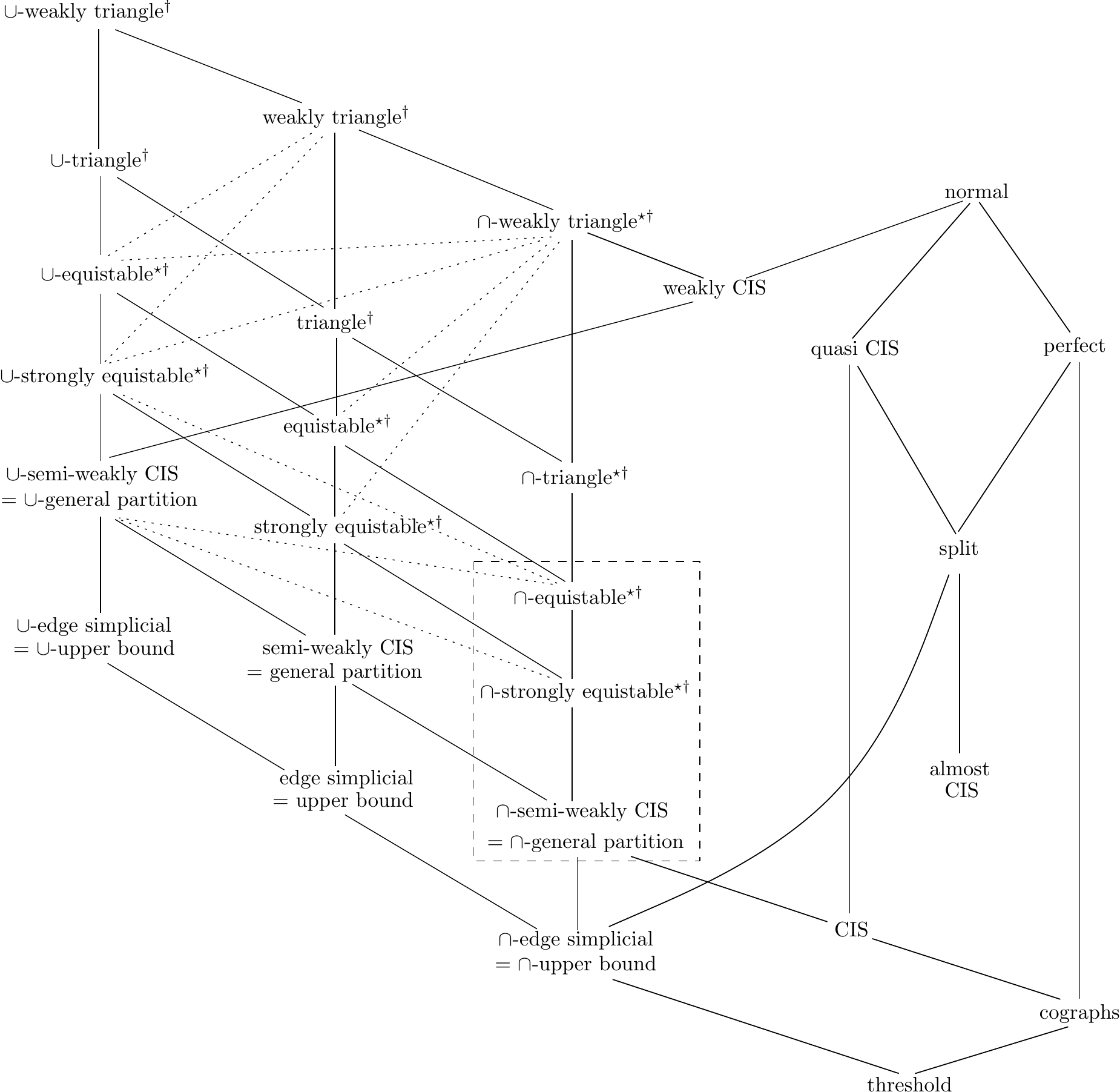}
\caption{Hasse diagram of the known inclusion and non-inclusion relations between considered properties.
A class $X$ is contained in class $Y$ if and only if there is an upward-going $X$-$Y$ path in the diagram, except possibly for the following:
(1) It is not known whether any of the inclusion relations between the three classes in the rectangular region is proper or not;
(2) For each graph class $X$ marked with $^\star$, it is not known whether $X$ is contained in
the class of weakly CIS graphs or not;
 (3) For each graph class marked with $^\dagger$, it is not known whether $X$ is contained in
the class of normal graphs or not;
(4) For each dotted line, it is not known whether it represents an inclusion relation or not.}
\label{fig:Hasse}
\end{center}
\end{figure}

\begin{sloppypar}
In what follows, we justify the inclusion and non-inclusion relations depicted by the Hasse diagram in Figure~\ref{fig:Hasse},
traversing the diagram bottom up. {For each class $X$, we justify only the inclusion relations of the form $X\subseteq Y$ where $Y$ is an immediate successor of $X$ in the diagram, and the non-inclusion relations of the form $X\nsubseteq Y$ where $Y$ is a maximal class not containing $X$. This is sufficient, since the remaining relations follow by transitivity.} For each inclusion relation $X\subseteq Y$, we also state whether it is proper (whenever known),
and justify cases $X\nsubseteq Y$ (whenever known).
For two graph classes $X$ and $Y$ such that $X\nsubseteq Y$, a graph belonging to $X\setminus Y$ is said to be a {\it $(X,Y)$-separating graph}.
Several pairs $(X,Y)$ in our Hasse diagram such that $X\nsubseteq Y$ have common separating graphs.
For convenience, we list here all separating graphs that will occur in our analysis:
\begin{enumerate}[--]
  \item $2K_2$, the disjoint union of two copies of $K_2$,
  \item 
  the {\it bull}, that is, the graph obtained from $P_4$ by gluing a triangle on the middle edge (formally, the graph with vertex set $\{a,b,c,d,e\}$ and edge set $\{ab,bc,cd,be,ce\}$),
  \item $C_4$, the $4$-cycle (notice that $C_4$ is the complement of $2K_2$),
  \item $C_9$, the $9$-cycle,
  \item ${\it CK}$, the disjoint union of $C_4$ and $K_2$,
  \item $C_5^*$, the $10$-vertex graph obtained from the $5$-cycle $C_5$ by gluing a new triangle along every edge,
  \item $\textrm{Cir}_9$, a $9$-vertex circle graph\footnote{A circle graph is the intersection graph of chords on the circle.}
  that can be defined as follows:
$V(\textrm{Cir}_9)=\{1,\ldots,9\}$ and the set of its maximal stable set is given by ${\cal S}(\textrm{Cir}_9)
 = \{\{1,2,3\},\{4,5,6\},\{7,8,9\},\{1,4,7\},\{3,6,9\}\}$.
  \item $F$, the split incidence graph of the Fano plane $\Pi_F$ (the finite projective plane of order $2$), that is, $F$ is the $14$-vertex split graph with split partition
$(P,L)$ (in particular, $P$ is a clique and $L$ is an independent set), where
$P$ is the ($7$-element) set of points of the Fano plane,
$L$ is the ($7$-element) set of lines of the Fano plane,
and a vertex $p\in P$
is adjacent to a vertex $\ell\in L$ if and only if the corresponding point and line are incident in $\Pi_F$,
  \item ${\it FK}$, the disjoint union of $F$ and $K_2$,
  \item $G_{12}$, a specific $12$-vertex graph (that will be defined in Section~\ref{sec:G12}),
  \item $K_1$, the $1$-vertex graph,
  \item $L\overline{L}$, the disjoint union of graph $L$ and its complement, where $L$ is
  the graph obtained from the line graph of $K_{5,6}$ by gluing a new triangle along every edge,
  \item $L(K_{3,3})$, the line graph of the complete bipartite graph $K_{3,3}$,
  \item the {\it net}, that is, the complement of $S_3$,
  \item $P_4$, the $4$-vertex path,
  \item $S_3$, the $3$-sun, that is, the split graph with vertex set $\{v_1,v_2,v_3,v_{12},v_{13},v_{23}\}$ in which
  $\{v_1,v_2,v_3\}$ is a clique, $\{v_{12},v_{13},v_{23}\}$ is an independent set, and $N(v_{ij}) = \{v_i,v_j\}$ for all $(i,j)\in \{(1,2),(1,3),(2,3)\}$,
  \item ${\it SK}$, the disjoint union of $S_3$ and $K_2$,
  \item {$G_{14}$, the $14$-vertex graph that is the complement of the line graph of a particular $9$-vertex triangle-free graph $G^*$, see~Fig.~\ref{fig:example},}

\begin{figure}[h!]
\begin{center}
\includegraphics[width=0.6\textwidth]{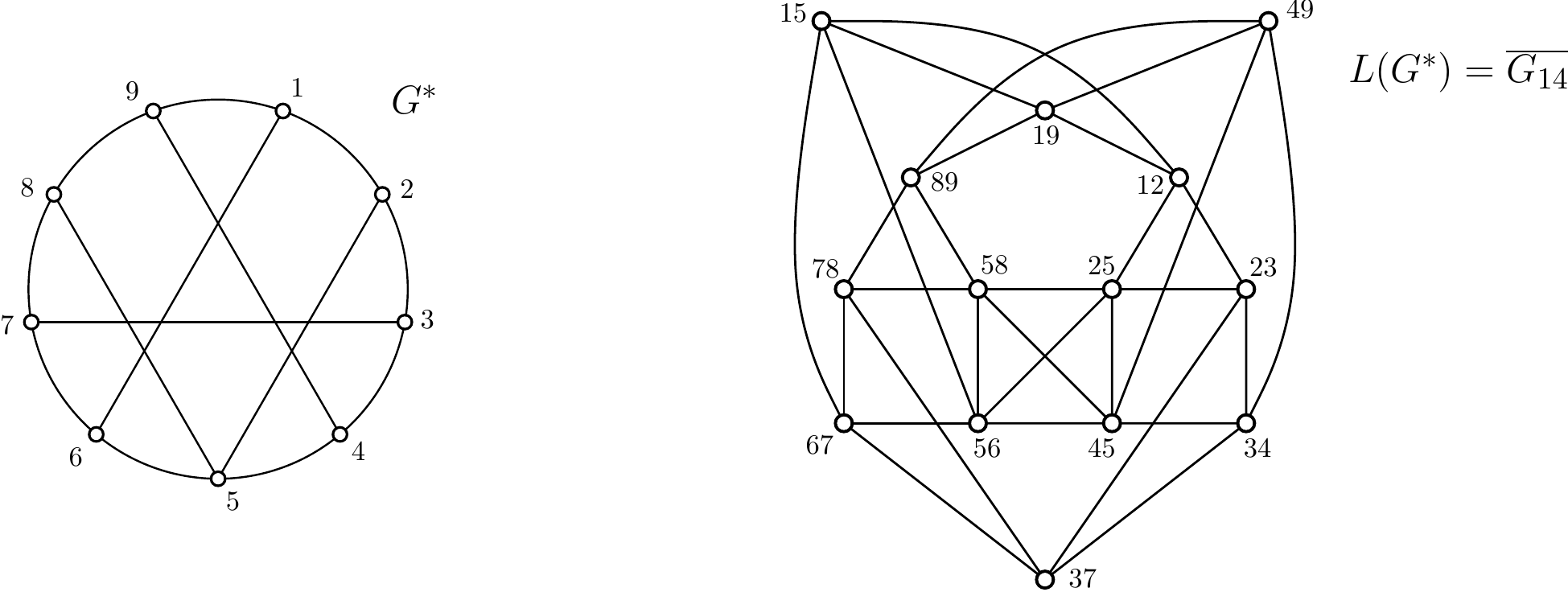}
\caption{The graph $G^*$ and its line graph. $G_{14}$ is the complement of $L(G^*)$.}
\label{fig:example}
\end{center}
\end{figure}

  \item {$G_{22}$, the $22$-vertex graph that is the complement of the line graph of the circulant $C_{11}(\{1,3\})$, see~\cite{MTT} for more details.}
\end{enumerate}
\end{sloppypar}

\subsection{Threshold graphs}
We recall the following characterization of threshold graphs due to Chv\'atal and Hammer~\cite{CH77}.
\begin{sloppypar}
\begin{theorem}\label{thm:ChvatalHammer}
For every graph $G$ the following conditions are equivalent:
\begin{enumerate}[(a)]
  \item\label{itemCHa} $G$ is threshold.
  \item\label{itemCHb} $G$ has no induced subgraph isomorphic to $2K_{2},C_{4}$ or $P_{4}$.
  \item\label{itemCHc} There is a partition of $V$ into two disjoint sets $I,K$ such that $I$ is an independent set, $K$ is a clique, and there exists an ordering $u_{1},u_{2},\ldots,u_k$ of vertices in $I$ such that $N(u_{1})\subseteq N(u_{2})\subseteq\ldots\subseteq N(u_k)$.
\end{enumerate}
\end{theorem}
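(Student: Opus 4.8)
The plan is to prove the cyclic chain of implications (\ref{itemCHa})$\Rightarrow$(\ref{itemCHb})$\Rightarrow$(\ref{itemCHc})$\Rightarrow$(\ref{itemCHa}), recalling that $G$ is \emph{threshold} precisely when there exist a weight function $w\colon V\to\R$ and a threshold $t\in\R$ such that a set $X\subseteq V$ is stable if and only if $w(X)\le t$, where $w(X)=\sum_{x\in X}w(x)$.

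For (\ref{itemCHa})$\Rightarrow$(\ref{itemCHb}) I would use a single uniform weight-counting contradiction. Each of $2K_2$, $C_4$, and $P_4$ has a four-element vertex set $\{a,b,c,d\}$ that decomposes simultaneously into two disjoint edges and into two disjoint non-edges: for $P_4(a,b,c,d)$ the edges $ab,cd$ against the non-edges $\{a,c\},\{b,d\}$, and for $2K_2$ and $C_4$ the matching edges (resp.\ two opposite sides) against the two diagonals. If $G$ were threshold, each of the two edges, not being stable, would have weight exceeding $t$, while each of the two non-edges, being stable, would have weight at most $t$. Summing the two pairs gives $w(a)+w(b)+w(c)+w(d)>2t\ge w(a)+w(b)+w(c)+w(d)$, a contradiction; hence a threshold graph induces none of the three.

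The implication (\ref{itemCHb})$\Rightarrow$(\ref{itemCHc}) is the combinatorial heart, and I expect it to be the main obstacle. The key lemma is that in a $\{2K_2,C_4,P_4\}$-free graph the neighborhoods are \emph{nested}: for all $u,v\in V$ one has $N(u)\setminus\{v\}\subseteq N(v)\setminus\{u\}$ or $N(v)\setminus\{u\}\subseteq N(u)\setminus\{v\}$. I would prove it by contradiction: if neither inclusion holds there are $a\in N(u)\setminus N[v]$ and $b\in N(v)\setminus N[u]$, and the four cases according to whether $uv$ and $ab$ are edges yield, respectively, an induced $2K_2$ (both non-edges), a $P_4$ ($uv$ an edge only), a $P_4$ ($ab$ an edge only), or a $C_4$ (both edges) on $\{u,v,a,b\}$, each forbidden by (\ref{itemCHb}). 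From nesting I would then derive the peeling lemma: a nonempty $\{2K_2,C_4,P_4\}$-free graph has an isolated or a universal vertex. Indeed, take a vertex $v$ of maximum degree; if no vertex is isolated and $v$ is not universal, choose a non-neighbor $p$ of $v$ and a neighbor $a$ of $p$, so that $p\in N(a)\setminus N[v]$; applying nesting to the adjacent pair $a,v$ (and using that $v$ has maximum degree) forces $N(a)=N[v]\setminus\{a\}$, contradicting $p\in N(a)$. Since deleting a vertex preserves (\ref{itemCHb}), induction builds $G$ by repeatedly adding an isolated or a universal vertex. Collecting the isolated-insertion vertices into $I$ (which is independent, with each $N(u_i)\subseteq K$, and ordered by decreasing insertion time so that the neighborhoods form the nested chain $N(u_1)\subseteq\cdots\subseteq N(u_k)$) and the universal-insertion vertices into $K$ (which is a clique) produces the partition required by (\ref{itemCHc}).

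Finally, for (\ref{itemCHc})$\Rightarrow$(\ref{itemCHa}) I would turn the nested structure into explicit weights. As $I$ is stable, every $N(u_i)$ lies in the clique $K$, and the chain $N(u_1)\subseteq\cdots\subseteq N(u_k)$ lets me order $K$ so that each $N(u_i)$ is a common top segment of that order. I would then assign rapidly increasing weights making the clique vertices dominate the independent ones and larger-indexed vertices dominate smaller ones, and choose $t$ strictly between the maximum weight of a stable set and the minimum weight of an edge; a routine verification exploiting the nested (staircase) adjacency pattern shows that $X$ is stable exactly when $w(X)\le t$. Equivalently, one may read the weights directly off the build sequence of the previous step, giving each isolated vertex a small weight and each universal vertex a weight exceeding the total weight inserted before it. This closes the cycle and establishes the equivalence of (\ref{itemCHa})--(\ref{itemCHc}).
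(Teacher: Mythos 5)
First, a point of reference: the paper does not prove this theorem at all --- it quotes it as a known result of Chv\'atal and Hammer~\cite{CH77} --- so your proposal has to stand entirely on its own. Your implication (a)$\Rightarrow$(b) is correct, and (b)$\Rightarrow$(c) is essentially correct: the nesting lemma with its four-case analysis is right, and the extraction of the partition $(I,K)$ and of the nested chain from the build sequence is right. One imprecision in the peeling lemma: you call $a,v$ an ``adjacent pair'', but nothing in your choice of $a$ (a neighbor of the non-neighbor $p$ of $v$) makes $a$ adjacent to $v$, and when $a\not\sim v$ the conclusion $N(a)=N[v]\setminus\{a\}$ is impossible (it would force $v\in N(a)$). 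In that case nesting instead gives $N(v)\subseteq N(a)$, and then $p\in N(a)\setminus N(v)$ contradicts the maximality of $d(v)$. Both cases do end in a contradiction, so this is a fixable omission rather than a fatal one.

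The genuine gap is in (c)$\Rightarrow$(a): both weightings you describe are wrong, and fail already on $K_3$. In any threshold weighting, every singleton is a stable set, so $w(z)\le t$ for every vertex $z$, while every edge $xy$ satisfies $w(x)+w(y)>t$; consequently no single vertex may outweigh the two endpoints of any edge. Your build-sequence scheme violates this immediately: in $K_3$ with creation order $v_1,v_2,v_3$ you set $w(v_3)>w(v_1)+w(v_2)$, so the stable set $\{v_3\}$ outweighs the edge $\{v_1,v_2\}$, and no threshold can exist ($t\ge w(v_3)>w(v_1)+w(v_2)>t$). ``Rapidly increasing'' weights on the clique side of the split partition fail for the same reason, and even giving all clique vertices equal large weights is not enough in general (take $I=\{u_1,u_2\}$, $K=\{c_1,c_2\}$ with $N(u_1)=\{c_2\}$, $N(u_2)=\{c_1,c_2\}$: the stable set $\{u_1,c_1\}$ then weighs exactly as much as the edge $\{u_1,c_2\}$). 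The repair reverses your intuition: the rapidly increasing weights must go on the \emph{independent} side, say $w(u_i)=2^i$, while each clique vertex gets a weight pinned to the threshold and tied to its non-neighborhood, namely $w(c)=t-w(\{u_i\in I: u_i\not\sim c\})$ with $t=2^{k+2}$. By nestedness every maximal stable set containing a clique vertex then sums to exactly $t$, the set $I$ sums to less than $t$, every mixed edge $u_ic$ exceeds $t$ because $w(u_i)>\sum_{l<i}2^l$ bounds the weight of the non-neighbors of $c$, and every edge inside $K$ exceeds $t$ because $t>2\sum_i w(u_i)$. (Equivalently, in the build-sequence induction a universal vertex must receive weight \emph{equal to the current threshold}, which stays unchanged, while an isolated vertex gets a weight smaller than the current gap, raising the threshold by that amount.) As written, your third implication does not hold, so the cycle does not close.
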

\end{sloppypar}

In part (c) above, we may assume without loss of generality that $I$ is a maximal independent set of $G$, that is, that $\cup_{u\in I}N(u) = K$.

\begin{corollary}
Every threshold graph is edge simplicial and co-edge simplicial. The inclusion is proper.
\end{corollary}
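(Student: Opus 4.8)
The plan is to establish the inclusion by proving a single statement---that every threshold graph is edge simplicial---and then invoking self-complementarity, and to establish properness with one small explicit graph.

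For the inclusion, I would prove that every threshold graph $G$ is edge simplicial by induction on $|V(G)|$, exploiting the fact (which follows from Theorem~\ref{thm:ChvatalHammer}(c)) that a threshold graph on at least one vertex always contains a vertex $v$ that is either \emph{isolated} or \emph{universal} (adjacent to all other vertices): with the notation of part (c), if $N(u_1)=\emptyset$ then $u_1$ is isolated, and otherwise any vertex of $N(u_1)$ is universal by the nesting of the neighborhoods. Since every induced subgraph of a threshold graph is threshold (by the forbidden-subgraph characterization, Theorem~\ref{thm:ChvatalHammer}(b)), the graph $G-v$ is threshold and, by the induction hypothesis, edge simplicial. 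If $v$ is isolated, then deleting it changes neither the edge set nor any closed neighborhood of the remaining vertices, so every simplicial clique of $G-v$ is still a simplicial clique of $G$ and the induction goes through immediately. The only delicate case is when $v$ is universal: here $v$ itself is typically \emph{not} simplicial, so one cannot simply use $N[v]$. Instead I would lift simplicial cliques: for every vertex $w\neq v$ one has $N_G[w]=N_{G-v}[w]\cup\{v\}$, and since $v$ is adjacent to all vertices this augmented set is again a clique, hence a simplicial clique of $G$. Every edge of $G-v$ is then covered by such a lifted clique, and every edge $vy$ is covered because $y$ lies in some simplicial clique $Q$ of $G-v$ (the clique of any edge incident to $y$, or the trivial clique $\{y\}$ if $y$ is isolated in $G-v$), so that $Q\cup\{v\}$ contains both $v$ and $y$. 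This completes the induction. Since the forbidden family $\{2K_2,C_4,P_4\}$ is closed under complementation, threshold graphs form a self-complementary class; applying the edge-simplicial result to $\overline{G}$ therefore shows that $G$ is co-edge simplicial as well, so every threshold graph is $\cap$-edge simplicial.

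For properness, I would exhibit a $\cap$-edge-simplicial graph that is not threshold, and the \emph{bull} is a convenient five-vertex witness. The bull contains an induced $P_4$, so by Theorem~\ref{thm:ChvatalHammer}(b) it is not threshold. It is edge simplicial: each of its two pendant vertices is simplicial and its closed neighborhood (an edge) covers the incident pendant edge, while the triangle vertex carrying no pendant is simplicial and its closed neighborhood equals the triangle, covering the three triangle edges. Finally, the bull is self-complementary, so it is also co-edge simplicial, hence $\cap$-edge simplicial. Thus the inclusion of threshold graphs into the $\cap$-edge-simplicial graphs is proper.

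The main obstacle is precisely the universal-vertex step of the induction, where the naive choice $N[v]$ fails and one must instead lift the simplicial cliques of $G-v$ and augment each with $v$; once this lifting is set up correctly, the edge-covering bookkeeping and the complementation argument are both routine.
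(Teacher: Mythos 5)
Your proof is correct, but it takes a genuinely different route to the inclusion than the paper does. The paper reads the edge-covering family of simplicial cliques directly off part (c) of Theorem~\ref{thm:ChvatalHammer}: taking $I=\{u_1,\dots,u_k\}$ to be a maximal independent set (so that, by the nesting, $N(u_k)=K$), each $N[u_i]$ is a simplicial clique, and these $k$ cliques already cover every $I$--$K$ edge as well as every edge inside $K$ --- no induction at all. You instead extract from part (c) only the weaker fact that a threshold graph has an isolated or universal vertex, and run a vertex-elimination induction, which forces you to handle the universal-vertex case by lifting simplicial cliques of $G-v$ (correctly observing that $N_G[w]=N_{G-v}[w]\cup\{v\}$ is again a clique, and covering a pendant edge $vy$ with $y$ isolated in $G-v$ via $N_G[y]=\{y,v\}$). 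Your route is longer and needs this extra bookkeeping, but it uses only the ``creation sequence'' view of threshold graphs and in passing proves a slightly more general closure fact, namely that edge-simpliciality is preserved under adding an isolated or a universal vertex; the paper's route buys brevity and an explicit global description of the covering family. The self-complementarity step (via the forbidden-subgraph characterization) and the properness witness (the bull, with the same verification that it is self-complementary, edge simplicial, and contains an induced $P_4$) are identical in both.
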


\begin{proof}
Let $G$ be a threshold graph.
The equivalence between \eqref{itemCHa} and \eqref{itemCHc} in Theorem~\ref{thm:ChvatalHammer}
implies that the simplicial cliques of the form $N[u_1],\ldots, N[u_k]$ cover all edges of $G$.
Therefore, $G$ is edge simplicial.
The equivalence between \eqref{itemCHa} and \eqref{itemCHb} in Theorem~\ref{thm:ChvatalHammer} shows that the class of threshold graphs is self-complementary.
Therefore, if $G$ is threshold, then $G$ is also co-edge simplicial.
To see that the inclusion is proper, consider for example the bull. It can be seen that this is a (self-complementary)
edge simplicial co-edge simplicial graph that is not threshold since it contains an induced $P_4$.
\qed
\end{proof}
\newline\noindent The fact that every threshold graph is a cograph is well known, see, e.g.,~\cite{BLS}; this inclusion is also proper
(e.g., $C_4$ is a cograph that is not threshold).

\subsection{Cographs}

The fact that every cograph is perfect is well known, see, e.g.,~\cite{BLS}; the inclusion is proper
(e.g., $P_4$ is a perfect graph that is not a cograph).
The fact that every cograph (equivalently: every $P_4$-free graph) is CIS is also well known, see, e.g.,~\cite{Gur11}; the inclusion is proper
(e.g., the bull is a CIS graph that is not a cograph).\\
Regarding the remaining non-inclusions, it is sufficient to justify the following two:
\begin{itemize}
  \item Not every cograph is $\cup$-edge simplicial, for example, ${\it CK}$, that is, the graph $C_4+K_2$, is not.
  \item Not every cograph is split. For example, $C_4$ is not.
\end{itemize}

\subsection{$\cap$-edge simplicial ($\cap$-upper bound) graphs}

The fact that the $\cap$-edge simplicial graphs are exactly the $\cap$-upper bound graphs
follows from Proposition~\ref{prop:edge-simplicial-upper-bound}.
The fact that every $\cap$-edge simplicial graph is edge simplicial follows from definition; the inclusion is proper
(e.g., $2K_2$ is edge simplicial but not $\cap$-edge simplicial).
The fact that every $\cap$-edge simplicial graph is $\cap$-semi-weakly-CIS follows from Proposition~\ref{prop:edge-simplicial-semi-weakly-CIS};
the inclusion is proper (e.g., $2K_2$ is $\cap$-semi-weakly-CIS but not $\cap$-edge simplicial).
The fact that every $\cap$-edge simplicial graph is split follows from Proposition~\ref{prop:cap-es-split};
the inclusion is proper (e.g., $P_4$ is split but not $\cap$-edge simplicial).
\\
Regarding the remaining non-inclusions, we have:
\begin{itemize}
  \item Not every $\cap$-edge simplicial graph is CIS. For example, $F$ is not.

  \item Not every $\cap$-edge simplicial graph is almost CIS. For example, $K_1$ is not.
\end{itemize}

\subsection{CIS graphs}

Since being CIS is a self-complementary property, the fact that every CIS graph is $\cap$-semi-weakly-CIS follows
from Proposition~\ref{prop:CIS-sw-CIS-w-CIS}.
This inclusion is proper: Proposition~\ref{prop:projective} implies that
the split incidence graph of any projective plane is $\cap$-edge-simplicial (and thus also $\cap$-semi-weakly-CIS, by Proposition~\ref{prop:edge-simplicial-semi-weakly-CIS})
but not CIS. The smallest example from this family is $F$, the
split incidence graph of the Fano plane. (Other examples are given by Corollary~\ref{cor:random}.)
The fact that every CIS graph is quasi CIS is part of Definition~\ref{dfn:quasi-CIS}, and the inclusion is proper (e.g., $P_4$ is quasi CIS but not CIS).
\\
Regarding the remaining non-inclusions, we have:
\begin{itemize}
  \item Not every CIS graph is $\cup$-edge simplicial (since not every cograph is $\cup$-edge simplicial).

  \item Not every CIS graph is perfect. For example, $C_5^*$ is CIS (every maximal clique is simplicial, hence strong)
  but not perfect (since it contains an induced $C_5$).
\end{itemize}

\subsection{Almost CIS graphs}

The fact that every almost CIS graph is split follows from Proposition~\ref{prop:Wu}; the inclusion is proper
(e.g., $K_1$ is split but not almost CIS).
\\
Regarding the remaining non-inclusions, observe that $P_4$ is a self-complementary graph that is almost CIS but not $\cup$-weakly triangle
(cf.~the discussion on p.~\pageref{ex:P4} after Definition~\ref{def:wt}).

\subsection{Split graphs}

The fact that every split graph is quasi CIS follows from Proposition~\ref{prop:split-quasi-CIS};
the inclusion is proper (e.g., $C_4$ is quasi CIS but not split). The fact that every split
graph is perfect is well known, see, e.g.,~\cite{BLS}; this inclusion is also proper
(e.g., $C_4$ is a perfect graph that is not split).
\\Regarding the remaining non-inclusions, $P_4$ is a split graph that is not $\cup$-weakly triangle.

\subsection{Edge simplicial (upper bound) graphs}

The fact that the edge simplicial graphs are exactly the upper bound graphs
follows from Proposition~\ref{prop:edge-simplicial-upper-bound}.
The fact that every edge simplicial graph is $\cup$-edge simplicial follows from definition; the inclusion is proper
(e.g., $C_4$ is $\cup$-edge simplicial but not edge simplicial).
The fact that every edge simplicial graph is semi-weakly-CIS
follows from Proposition~\ref{prop:edge-simplicial-semi-weakly-CIS};
the inclusion is proper
(e.g., $C_4$ is semi-weakly-CIS but not edge simplicial).
\\
Regarding the remaining non-inclusions, we have:
\begin{itemize}
  \item Not every edge simplicial graph is $\cap$-triangle. For example, the graph $S_3$ is not.
  Indeed, on the one hand, $S_3$ is edge simplicial (and hence also triangle), on the other hand, it is not co-triangle.
  The complement of $S_3$ is isomorphic to the {\it net}, that is, the graph
  with vertex set $\{x_1,x_2,x_3,y_1,y_2,y_3\}$ in which
  $\{x_1,x_2,x_3\}$ is a clique, $\{y_1,y_2,y_3\}$ is an independent set, and $N(y_{i}) = \{x_i\}$ for all $i\in \{1,2,3\}$.
  The maximal stable set $S = \{y_1,y_2,y_3\}$ does not contain any vertex adjacent to both endpoints of the edge $x_1x_2$, thus the complement of $\it net$ is
  not triangle.

  \item Not every edge simplicial graph is quasi CIS. For example, the graph ${\it SK}$, the disjoint union of $S_3$ and $K_2$, is not. Indeed, denoting
  the vertices of ${\it SK}$ as $V({\it SK}) = \{v_1,v_2,v_3,v_{12},v_{13},v_{23}, u_1,u_2\}$ where $u_1, u_2$ form the $2$-vertex component, and
  the remaining edges are as in the definition of $S_3$, the maximal clique $C = \{v_1,v_2,v_2\}$ is disjoint from two distinct
  maximal stable sets, namely $\{v_{12},v_{13},v_{23}, u_1\}$, and $\{v_{12},v_{13},v_{23}, u_2\}$.

  \item Not every edge simplicial graph is perfect. For example, $C_5^*$ is not.
\end{itemize}

\subsection{$\cup$-edge simplicial ($\cup$-upper bound) graphs}

The fact that the $\cup$-edge simplicial graphs are exactly the $\cup$-upper bound graphs
follows from Proposition~\ref{prop:edge-simplicial-upper-bound}. The fact that every $\cup$-edge simplicial graph is $\cup$-semi-weakly-CIS
follows from Proposition~\ref{prop:edge-simplicial-semi-weakly-CIS}; the inclusion is proper
(e.g., $L(K_{3,3})$ is a self-complementary CIS graph, hence also $\cup$-semi-weakly-CIS, that is not edge simplicial).
\\
Regarding the remaining non-inclusions, we have:
\begin{itemize}
  \item Not every $\cup$-edge simplicial graph is triangle.
  For example, the net is not. (The complement of the net, $S_3$, is edge simplicial, but the net is not triangle, as justified above.)

  \item Not every $\cup$-edge simplicial graph is quasi CIS (since not every edge simplicial graph is quasi CIS).

  \item Not every $\cup$-edge simplicial graph is perfect (since not every edge simplicial graph is perfect).
\end{itemize}

\subsection{$\cap$-semi-weakly CIS ($\cap$-general partition) graphs}

The fact that the $\cap$-semi-weakly CIS graphs are exactly the $\cap$-general partition graphs
follows from Proposition~\ref{prop:semi-weakly-CIS=gpg}.
The fact that every $\cap$-semi-weakly CIS graph is semi-weakly CIS follows from definition; the inclusion is proper
(e.g., $S_3$ is semi-weakly CIS but not co-triangle, hence also not
co-semi-weakly CIS and not $\cap$-semi-weakly CIS).
The fact that every $\cap$-semi-weakly CIS graph is $\cap$-strongly equistable
follows from Theorem~\ref{thm:gpg-seq}. We do not know whether the inclusion is proper or not.
\\
Regarding the remaining non-inclusions, we have:
\begin{itemize}
  \item Not every $\cap$-semi-weakly CIS graph is $\cup$-edge simplicial (since not every cograph is $\cup$-edge simplicial).

  \item Not every $\cap$-semi-weakly CIS graph is quasi CIS. For example, ${\it FK}$, the disjoint union of $F$ and $K_2$,
  is not. (Graph $F$ is self-complementary, which implies that the complement of ${\it FK}$ is isomorphic to
  the join of $F$ and $2K_1$. By Proposition~\ref{prop:projective}, graph $F$ is edge simplicial, hence also semi-weakly CIS.
  Similarly, $K_2$ and its complement $2K_1$ are semi-weakly CIS,
  hence Proposition~\ref{prop:semi-weakly-CIS-disjoint-union-join} implies that both ${\it FK}$
  and its complement are semi-weakly CIS. Therefore, ${\it FK}$ is $\cap$-semi-weakly CIS. On the other hand,
  ${\it FK}$ is not quasi CIS, since the maximal clique $C = P$ is disjoint from two different maximal stable sets
  of the form $L\cup \{v\}$ where $v\in V(K_2)$.)

  \item Not every $\cap$-semi-weakly CIS graph is perfect (since not every CIS graph is perfect).
\end{itemize}

\subsection{Semi-weakly CIS (general partition) graphs}

The fact that the semi-weakly CIS graphs are exactly the general partition graphs is the statement of Proposition~\ref{prop:semi-weakly-CIS=gpg}.
The fact that every semi-weakly CIS graph is $\cup$-semi-weakly CIS follows from definition; the inclusion is proper
(e.g., the net is co-edge simplicial, and thus co-semi-weakly CIS, but not triangle, hence also not
semi-weakly CIS).
The fact that every semi-weakly CIS graph is strongly equistable
follows from Theorem~\ref{thm:gpg-seq}; {the inclusion is proper since the graph $G_{22}$,
that is, the complement of the line graph  of the circulant $C_{11}(\{1,3\})$,
is strongly equistable but not semi-weakly CIS (see~\cite{MTT}).}
\\
Regarding the remaining non-inclusions, we have:
\begin{itemize}
  \item Not every semi-weakly CIS graph is $\cup$-edge simplicial (since not every cograph is $\cup$-edge simplicial).

  \item Not every semi-weakly CIS graph is $\cap$-triangle. For example, $S_3$ is not (it is not co-triangle).

  \item Not every semi-weakly CIS graph is quasi CIS (since not every $\cap$-semi-weakly CIS graph is quasi CIS).

  \item Not every semi-weakly CIS graph is perfect (since not every CIS graph is perfect).
\end{itemize}

\subsection{$\cup$-semi-weakly CIS ($\cup$-general partition) graphs}\label{sec:G12}

The fact that the $\cup$-semi-weakly CIS graphs are exactly the $\cup$-general partition graphs follows from Proposition~\ref{prop:semi-weakly-CIS=gpg}.
The fact that every $\cup$-semi-weakly CIS graph is weakly CIS
follows from Proposition~\ref{prop:CIS-sw-CIS-w-CIS} and the fact that the weakly CIS property is self-complementary.
To show that the inclusion is proper, we will now define and analyze a graph denoted by $G_{12}$ that is weakly CIS graph but not $\cup$-semi-weakly CIS.
We introduce the graph $G_{12}$ by the following families ${\cal C}$ and ${\cal S}$ of cliques and stable sets:
\begin{eqnarray*}
  {\cal C} &=&
  \{\{1,4,7\},\{2,4,5,6\},\{2,4,6,7\},\{2,4,6,9\},\{2,4,9,12\},\{2,5,8\},\{2,6,7,11\},\\
   && ~~\{2,11,12\},\{3,6,9\},\{4,5,6,10\},\{4,10,12\},
  \{6,10,11\},
  \{10,11,12\}\}\,, \\
  {\cal S} &=& \{ \{1,2,3,10\},\{1,3,5,11\},\{1,3,5,12\},\{1,3,8,10\},\{1,3,8,11\},\{1,3,8,12\},\{1,5,9,11\},\{1,6,8,12\},\\
  &&~~\{1,8,9,10\},\{1,8,9,11\},
  \{3,4,8,11\},\{3,5,7,12\}, \{3,7,8,10\},\{3,7,8,12\},\{5,7,9\},\{7,8,9,10\}\}\,.
  \end{eqnarray*}
It is tedious but not difficult to verify that:
\begin{itemize}
  \item Each pair of distinct vertices from $V(G_{12}) = \{1,\ldots, 12\}$ belongs to either some $C\in {\cal C}$ or to some $S\in {\cal S}$ (but not both).
  \item Families ${\cal C}$ and ${\cal S}$ are the families of all maximal cliques and all maximal stable sets of $G_{12}$, respectively.
\end{itemize}

\begin{proposition}\label{prop:G12-weakly-CIS}
Graph $G_{12}$ is weakly CIS.
\end{proposition}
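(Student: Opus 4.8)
The plan is to exhibit an explicit cross-intersecting pair $({\cal C}',{\cal S}')$ witnessing that $G_{12}$ is weakly CIS, in accordance with the definition. Since the definition requires an edge-covering collection of maximal cliques and a non-edge-covering collection of maximal stable sets that are cross-intersecting, the natural first move is to try taking ${\cal C}' = {\cal C}$ and ${\cal S}' = {\cal S}$, the full families of all maximal cliques and all maximal stable sets listed in the definition of $G_{12}$. The bulleted facts stated just before the proposition already hand us most of what we need: the first bullet says that every pair of distinct vertices lies in some member of ${\cal C}$ or some member of ${\cal S}$, and the second says these are \emph{all} the maximal cliques and all the maximal stable sets. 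The first bullet immediately gives that ${\cal C}$ is edge covering (every \emph{adjacent} pair, lying in no stable set, must lie in a clique of ${\cal C}$) and that ${\cal S}$ is non-edge covering (every \emph{non-adjacent} pair lies in some stable set of ${\cal S}$).

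So the real content to verify is that ${\cal C}$ and ${\cal S}$ are \emph{cross-intersecting}: for every $C\in{\cal C}$ and every $S\in{\cal S}$ we need $C\cap S\neq\emptyset$. First I would note that this is a purely finite check: there are $13$ cliques and $16$ stable sets, so $208$ pairs, each an intersection of small subsets of $\{1,\ldots,12\}$. I would organize the verification rather than brute-force it: for each maximal clique $C$, since the complement graph has $S$ as a maximal clique, a clean way to argue is that a maximal clique and a maximal stable set fail to intersect exactly when their union is a \emph{vertex-disjoint} pair covering relevant structure; concretely I would check, clique by clique, that no listed stable set avoids all of its vertices. The main obstacle, and essentially the only one, is the sheer bookkeeping of this finite verification—as the authors themselves flag it as ``tedious but not difficult.'' There is no conceptual difficulty; the risk is purely clerical error, so I would double-check each clique against each stable set, ideally by a short tabulation.

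Once cross-intersection is confirmed, the proof closes immediately: the pair $({\cal C},{\cal S})$ satisfies all three requirements of the definition of a weakly CIS graph (edge covering, non-edge covering, cross-intersecting), so $G_{12}$ is weakly CIS. I would remark that this argument uses \emph{all} maximal cliques and \emph{all} maximal stable sets, so it is in fact the strongest possible witness; notably it does \emph{not} claim that $G_{12}$ is semi-weakly CIS or CIS (which would require a stronger intersection property, e.g.\ strong cliques meeting \emph{every} maximal stable set), consistent with the surrounding text's intent to separate weakly CIS from $\cup$-semi-weakly CIS. Thus the whole proof amounts to: invoke the two stated bullet facts for the covering properties, and carry out the finite cross-intersection check.
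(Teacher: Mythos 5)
There is a genuine gap: your proposed witness does not work. Taking ${\cal C}'={\cal C}$ and ${\cal S}'={\cal S}$ (the \emph{full} families of all maximal cliques and all maximal stable sets) and asking them to be cross-intersecting is, by Definition~\ref{dfn:CIS}, exactly asking that $G_{12}$ be CIS --- and $G_{12}$ is not CIS. Concretely, the maximal clique $\{6,10,11\}\in{\cal C}$ is disjoint from the maximal stable set $\{5,7,9\}\in{\cal S}$ (as is the maximal clique $\{10,11,12\}$), so your ``finite cross-intersection check'' would fail at these pairs. This is not an accident of bookkeeping: the graph $G_{12}$ is constructed precisely to be weakly CIS but not $\cup$-semi-weakly CIS (see the proposition immediately following this one in the paper, whose proof uses exactly these disjoint pairs), so no argument that certifies the full families as cross-intersecting can succeed. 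Your closing remark --- that using all maximal cliques and stable sets ``does not claim that $G_{12}$ is CIS'' --- reflects a misreading of the definitions: the entire point of the weakly CIS notion is the freedom to pass to \emph{proper} subfamilies, and with the maximal choice of families, weakly CIS collapses to CIS.

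The paper's proof instead selects carefully chosen proper subfamilies: a family ${\cal C}'$ of seven maximal cliques (omitting, among others, $\{6,10,11\}$) and a family ${\cal S}'$ of six maximal stable sets (omitting, in particular, $\{5,7,9\}$), and then verifies that these are cross-intersecting while ${\cal C}'$ still covers every edge and ${\cal S}'$ still covers every non-edge of $G_{12}$. The missing idea in your proposal is precisely this selection step: one must discard the ``bad'' cliques and stable sets responsible for the failure of the CIS property, while retaining enough sets to preserve the two covering properties. (The paper's remark that the example comes from deleting a point of the projective plane of order $3$ explains why such a selection exists.)
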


\begin{proof}
Let us consider the subfamilies
\begin{eqnarray*}
{\cal C}' &=&  \{\{1,4,7\},\{2,4,9,12\},\{2,5,8\},\{2,6,7,11\},\{3,6,9\},\{4,5,6,10\},\{10,11,12\}\}\,,\\
{\cal S}' &=&  \{\{1,2,3,10\},\{1,5,9,11\},\{1,6,8,12\},\{3,4,8,11\},\{3,5,7,12\},\{7,8,9,10\}\}
\end{eqnarray*}
of ${\cal C}$ and of ${\cal S}$, respectively.
It is not difficult to verify that ${\cal C}'$ and ${\cal S}'$ are cross-intersecting and cover all edges and non-edges of $G_{12}$, respectively.
This shows that $G_{12}$ is weakly CIS.
\qed
\end{proof}

\begin{proposition}
Graph $G_{12}$ is not $\cup$-semi-weakly CIS.
\end{proposition}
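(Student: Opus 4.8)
The plan is to establish separately that $G_{12}$ is neither semi-weakly CIS nor co-semi-weakly CIS. The one structural observation I would record first is a localization principle for strong cliques: if a strong clique $K$ is contained in a maximal clique $K'$, then $K'$ is itself strong, since $K'\supseteq K$ meets every maximal stable set that $K$ meets. Consequently an edge $uv$ is contained in some strong clique if and only if at least one of the (finitely many) maximal cliques containing $uv$ is strong. Thus $G_{12}$ is semi-weakly CIS exactly when every edge lies in a strong maximal clique, and to refute semi-weak-CISness it suffices to exhibit a single edge all of whose covering maximal cliques fail to meet some maximal stable set. This localization is precisely what lets me avoid classifying all strong cliques.

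For the edge side I would take the edge $\{4,10\}$. Scanning the (verified complete) family ${\cal C}$, the only maximal cliques containing both $4$ and $10$ are $\{4,5,6,10\}$ and $\{4,10,12\}$. Neither is strong: the maximal stable set $\{1,3,8,11\}\in{\cal S}$ is disjoint from $\{4,5,6,10\}$, and $\{1,3,5,11\}\in{\cal S}$ is disjoint from $\{4,10,12\}$. Hence by the localization principle no strong clique covers $\{4,10\}$, so no edge-covering family of strong cliques can exist, and $G_{12}$ is not semi-weakly CIS.

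For the complement I would run the dual argument. A clique of $\overline{G_{12}}$ is a stable set of $G_{12}$, it is strong in $\overline{G_{12}}$ exactly when (as a stable set of $G_{12}$) it meets every member of ${\cal C}$, and the maximal cliques of $\overline{G_{12}}$ are the members of ${\cal S}$. The same localization principle therefore shows that $\overline{G_{12}}$ is semi-weakly CIS if and only if every non-edge of $G_{12}$ lies in a maximal stable set meeting all of ${\cal C}$. I would take the non-edge $\{5,7\}$: the only members of ${\cal S}$ containing both $5$ and $7$ are $\{3,5,7,12\}$ and $\{5,7,9\}$, and the first misses the maximal clique $\{2,4,6,9\}\in{\cal C}$ while the second misses $\{2,11,12\}\in{\cal C}$. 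Hence no strong stable set covers $\{5,7\}$, so $\overline{G_{12}}$ is not semi-weakly CIS, i.e.\ $G_{12}$ is not co-semi-weakly CIS. Combining the two halves yields that $G_{12}$ is not $\cup$-semi-weakly CIS.

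The whole argument is a finite, elementary verification; the only genuine content is the localization principle, which reduces each direction to checking non-strongness of just two explicit sets against one explicit witness apiece. The mild point to be careful about is exhaustiveness: the statements ``the only maximal cliques containing $\{4,10\}$ are $\{4,5,6,10\}$ and $\{4,10,12\}$'' and ``the only maximal stable sets containing $\{5,7\}$ are $\{3,5,7,12\}$ and $\{5,7,9\}$'' rely on ${\cal C}$ and ${\cal S}$ being \emph{all} maximal cliques and \emph{all} maximal stable sets of $G_{12}$, which is exactly what the two bullet points preceding Proposition~\ref{prop:G12-weakly-CIS} record. (Alternatively, one can verify directly that the only strong maximal cliques are $\{1,4,7\},\{2,5,8\},\{3,6,9\}$, which omit vertices $10,11,12$ altogether, and dually that the only strong maximal stable sets are $\{1,2,3,10\},\{1,6,8,12\},\{3,4,8,11\}$, which omit $5,7,9$; either phrasing makes the conclusion immediate.)
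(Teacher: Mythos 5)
Your proof is correct and takes essentially the same approach as the paper's: exhibit one edge whose covering maximal cliques all miss some maximal stable set (you use $\{4,10\}$ with witnesses $\{1,3,8,11\}$ and $\{1,3,5,11\}$; the paper uses $\{10,11\}$ with the single witness $\{5,7,9\}$), and one non-edge whose covering maximal stable sets all miss some maximal clique (both proofs use $\{5,7\}$, with different witness cliques), relying on completeness of the listed families ${\cal C}$ and ${\cal S}$. The localization principle you state explicitly is used implicitly in the paper, and all of your witness sets check out, so the argument is sound.
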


\begin{proof}
First, we show that $G_{12}$ is not semi-weakly CIS.
Indeed, the only maximal cliques that contain the edge $e = \{10,11\}$
are $\{6,10,11\}$ and $\{10,11,12\}$, neither of which is strong since both miss the maximal stable set $\{5,7,9\}$.
Thus, edge $e$ does not belong to any strong clique, and consequently $G_{12}$
does not admit an edge covering family of strong cliques.

Similarly, we show that $G_{12}$ is not co-semi-weakly CIS.
Indeed, the only maximal stable sets that contain the non-edge $\overline{e} = \{5,7\}$
are $\{5,7,9\}$ and $\{3,5,7,12\}$, neither of which is
strong,  since both miss the maximal clique $\{6,10,11\}$.
Thus, non-edge $\overline{e}$ does not belong to any strong stable set,
and consequently $G_{12}$ does not admit a non-edge covering family of strong stable sets.
\qed
\end{proof}

\begin{remark}
The above example was obtained from the finite projective plane of order $3$ by deleting one point from it; see, e.g.,~\cite{HP73}.
Every clique in ${\cal C}'$ of order $4$ and
every clique in ${\cal C}'$ of order $3$ together with the deleted point
form a line in the plane.
\end{remark}

\noindent The fact that every $\cup$-semi-weakly CIS graph is $\cup$-strongly equistable
follows from Theorem~\ref{thm:gpg-seq}. {The inclusion is proper, since
the graph $G_{22} = \overline{L(C_{11}(\{1,3\}))}$ is strongly equistable (and hence $\cup$-strongly equistable)
but it is neither semi-weakly CIS (see~\cite{MTT}) nor co-semi-weakly CIS, as we now verify by proving the following stronger claim.

\begin{proposition}\label{prop:G22}
The graph $G_{22} = \overline{L(C_{11}(\{1,3\}))}$ is not co-triangle.
\end{proposition}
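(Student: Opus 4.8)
The plan is to prove that $G_{22}=\overline{L(C_{11}(\{1,3\}))}$ is not co-triangle by exhibiting, in the complement $\overline{G_{22}}=L(C_{11}(\{1,3\}))$, a maximal stable set $S$ together with an edge $uv$ in $\overline{G_{22}}-S$ for which no vertex of $S$ is a common neighbor of $u$ and $v$. Since $\overline{G_{22}}$ is a line graph, I would work entirely in the root graph $H=C_{11}(\{1,3\})$, using the standard dictionary: vertices of $L(H)$ are edges of $H$; two are adjacent in $L(H)$ iff they share an endpoint; a stable set of $L(H)$ is a matching of $H$; and a maximal stable set of $L(H)$ is a maximal matching of $H$. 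Under this dictionary, an edge of $L(H)$ is a pair of edges of $H$ meeting at a common vertex $w$, and a common neighbor of this edge-pair lying in $S$ is an edge of $H$ incident to $w$ that belongs to the matching $S$.

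First I would make the combinatorics of $H=C_{11}(\{1,3\})$ explicit: its vertex set is $\mathbb{Z}_{11}$ and each $i$ is joined to $i\pm 1$ and $i\pm 3$, so $H$ is $4$-regular with $22$ edges, matching the $22$ vertices of $G_{22}$. The goal then becomes purely graph-theoretic: find a maximal matching $M$ of $H$ and two edges $e,f\in E(H)\setminus M$ sharing a common vertex $w$ such that $M$ contains no edge incident to $w$. Phrased this way, I want a maximal matching $M$ that saturates no edge at some vertex $w$ while $w$ has two $M$-unsaturated incident edges $e,f$; equivalently, $w$ is left \emph{unsaturated} by $M$ yet still has at least two incident edges, which is automatic since $H$ is $4$-regular.

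The key observation is that maximality of $M$ forces every unsaturated vertex to have all its neighbors saturated, but it does \emph{not} force the two incident non-matching edges $e,f$ at $w$ to have a common saturating partner in $M$ — that is exactly the co-triangle condition that I expect to fail. Concretely, I would pick an unsaturated vertex $w$ with two neighbors $a,b$ (so $e=wa$, $f=wb$); in $L(H)$ a common neighbor of $e$ and $f$ inside the matching $S=M$ would have to be a matching edge incident to $w$, of which there are none since $w$ is unsaturated. The verification then reduces to (i) exhibiting one explicit maximal matching $M$ of $C_{11}(\{1,3\})$ with an unsaturated vertex $w$, and (ii) checking that $M$ is genuinely maximal (every edge meets $M$) and that $w$ indeed has two incident edges outside $M$. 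Because $11$ is odd and $H$ has $11$ vertices, no perfect matching exists, so any maximal matching leaves at least one vertex unsaturated, which makes finding the configuration straightforward.

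The main obstacle is bookkeeping rather than conceptual difficulty: I must translate the chosen maximal stable set of $G_{22}$ back and forth through two complementations and the line-graph correspondence, and then confirm that the exhibited matching is maximal in $H$ (so that $S$ is a \emph{maximal} stable set of $\overline{G_{22}}$, as required by the triangle condition) while the witness vertex $w$ is unsaturated. I expect the cleanest route is to fix an explicit maximal matching of $C_{11}(\{1,3\})$, name its unsaturated vertex $w$ and two incident edges $e,f$, and verify by direct inspection that $S=M$ witnesses the failure of the triangle property in $\overline{G_{22}}$, hence that $G_{22}$ is not co-triangle.
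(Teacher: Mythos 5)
Your overall strategy---passing to the root graph $H=C_{11}(\{1,3\})$ and exhibiting a maximal matching $M$ together with an $M$-unsaturated vertex $w$ as a witness that the triangle condition fails in $\overline{G_{22}}=L(H)$---is sound, but your line-graph dictionary has a genuine gap. If $e=wa$ and $f=wb$ are two edges of $H$ meeting at $w$, the common neighbors of $e$ and $f$ in $L(H)$ are \emph{not} only the edges of $H$ incident with $w$: the edge $ab$, when it exists in $H$, is also a common neighbor of $e$ and $f$ (this is the classical distinction between cliques of $L(H)$ arising from stars of $H$ and those arising from triangles of $H$). So the fact that $w$ is unsaturated by $M$ does not by itself rule out a vertex of $S=M$ completing a triangle with $e$ and $f$. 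The inference really can fail: take $H$ to be a triangle $abc$ with a pendant edge $cd$, and $M=\{ac\}$; then $M$ is a maximal matching, $w=b$ is unsaturated, yet the matching edge $ac$ is a common neighbor of $ba$ and $bc$ in $L(H)$, so the triangle condition is satisfied at that configuration.

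The gap is easy to close for this particular $H$: the circulant $C_{11}(\{1,3\})$ is triangle-free, because a triangle would require three steps from $\{\pm 1,\pm 3\}$ summing to $0 \pmod{11}$, and any such sum is odd and of absolute value at most $9$. Hence no edge between two neighbors of $w$ exists, the only candidate common neighbors are edges at $w$, and your argument then goes through verbatim (any matching in an $11$-vertex graph covers at most $10$ vertices, so a maximal matching with an unsaturated vertex exists). With this one observation added, your proof is correct, and it is genuinely different from the paper's: the paper argues by contradiction, invoking the equistability of $G_{22}$ (a nontrivial result of Milani\v c and Trotignon~\cite{MTT}), Theorem~\ref{thm:triangle} (equistable implies triangle), and Theorem~\ref{thm:line-equistable-co-equistable} ($\cap$-triangle line graphs are CIS), and then derives the contradiction from the fact that $L(H)$ is not CIS because a maximal matching is disjoint from the star of an uncovered vertex. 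Your route is unconditional, elementary, and self-contained; what it demands in exchange is precisely the triangle-freeness check that you omitted.
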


\begin{proof}
Suppose for a contradiction that $G_{22} = \overline{L(C_{11}(\{1,3\}))}$ is co-triangle.
Then $L(C_{11}(\{1,3\}))$ is triangle. Since $G_{22}$ is equistable, it is also triangle
by Theorem~\ref{thm:triangle}, which implies that $L(C_{11}(\{1,3\}))$ is co-triangle.
Thus, $L(C_{11}(\{1,3\}))$ is $\cap$-triangle, which implies by Theorem~\ref{thm:line-equistable-co-equistable}
that $L(C_{11}(\{1,3\}))$ is CIS. This, however, is a contradiction, since any  maximal matching $M$ in
$C_{11}(\{1,3\})$
and the set of all edges incident with any vertex $v\in V(C_{11}(\{1,3\}))$ not covered by $M$
constitute a pair of a maximal stable set and a maximal clique that are disjoint.
\qed\end{proof}}

%
%
%

Regarding the remaining non-inclusions, we have:
\begin{itemize}
  \item Not every $\cup$-semi-weakly CIS graph is triangle (since not every $\cup$-edge simplicial graph is triangle).

  \item Not every $\cup$-semi-weakly CIS graph is quasi CIS (since not every $\cap$-semi-weakly CIS graph is quasi CIS).

  \item Not every $\cup$-semi-weakly CIS graph is perfect (since not every CIS graph is perfect).
\end{itemize}

\subsection{$\cap$-strongly equistable graphs}

The fact that every $\cap$-strongly equistable graph is $\cap$-equistable
follows from Theorem~\ref{thm:Seq-eq}. {We do not know whether the inclusion is proper or not.}
The fact that every $\cap$-strongly equistable graph is strongly equistable
follows from definition; the inclusion is proper
(e.g., $S_3$ is semi-weakly CIS hence strongly equistable but it is not co-triangle, hence also not
co-strongly equistable). {We do not know whether every $\cap$-strongly equistable graph is $\cup$-semi-weakly CIS,
weakly CIS, normal, or none of these. }
Regarding the remaining non-inclusions, we have:
\begin{itemize}
  \item Not every $\cap$-strongly equistable graph is quasi CIS (since not every $\cap$-semi-weakly CIS graph is quasi CIS).

  \item Not every $\cap$-strongly equistable graph is perfect (since not every CIS graph is perfect).
\end{itemize}

\subsection{Strongly equistable graphs}

The fact that every strongly equistable graph is equistable
is the statement of Theorem~\ref{thm:Seq-eq}; {the inclusion is proper
since the graph $G_{14}$, the complement of which is depicted in Fig.~\ref{fig:example}, is
equistable but not strongly equistable (see~\cite{MTT}).}
The fact that every strongly equistable graph is $\cup$-strongly equistable
follows from definition; the inclusion is proper
(e.g., the net is co-semi-weakly CIS and hence co-strongly equistable but it is not triangle, hence also not
strongly equistable). {We do not know whether every strongly equistable graph is weakly CIS,
$\cap$-weakly triangle, normal, or none of these. }
Regarding the remaining non-inclusions, we have:
\begin{itemize}
  \item Not every strongly equistable graph is $\cap$-triangle (since not every semi-weakly CIS graph is $\cap$-triangle).

  \item {Not every strongly equistable graph is $\cup$-semi-weakly CIS. A separating graph is given, for example, by
 $G_{22}$, which is strongly equistable but neither semi-weakly CIS (see~\cite{MTT}) nor co-semi-weakly CIS (see Proposition~\ref{prop:G22}). }

  \item Not every strongly equistable graph is quasi CIS (since not every $\cap$-semi-weakly CIS graph is quasi CIS).

  \item Not every strongly equistable graph is perfect (since not every CIS graph is perfect).
\end{itemize}

\subsection{$\cup$-strongly equistable graphs}

The fact that every $\cup$-strongly equistable graph is $\cup$-equistable
follows from Theorem~\ref{thm:Seq-eq}. {The inclusion is proper:
the graph $G_{14} = \overline{L(G^*)}$ (cf.~Fig.~\ref{fig:example}) is
equistable (and hence $\cup$-equistable) but it is neither
strongly equistable nor co-strongly equistable. The fact that it is not co-strongly equistable
is a consequence of the following stronger result, the proof of which is analogous to the proof of
Proposition~\ref{prop:G22}.}

\begin{proposition}\label{prop:G14}
The graph  $G_{14} = \overline{L(G^*)}$ is not co-triangle.
\end{proposition}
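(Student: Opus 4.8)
The plan is to follow, almost verbatim in structure, the contradiction argument used for Proposition~\ref{prop:G22}, merely substituting the root graph $G^*$ for the circulant $C_{11}(\{1,3\})$. First I would assume, for a contradiction, that $G_{14} = \overline{L(G^*)}$ is co-triangle; by definition this means that $\overline{G_{14}} = L(G^*)$ is a triangle graph. On the other hand, $G_{14}$ is equistable (this is the input hypothesis, established in~\cite{MTT}), so Theorem~\ref{thm:triangle} yields that $G_{14}$ is itself a triangle graph. Unpacking the definition, $\overline{L(G^*)}$ being triangle says precisely that $L(G^*)$ is co-triangle. Combining the two conclusions, $L(G^*)$ is $\cap$-triangle.

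Next I would invoke the characterization of CIS line graphs, Theorem~\ref{thm:line-equistable-co-equistable}, exactly as in the proof of Proposition~\ref{prop:G22}: a line graph that is $\cap$-triangle is CIS. Applying it to $L(G^*)$ gives that $L(G^*)$ is CIS.

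The final and decisive step is to contradict the CIS property of $L(G^*)$ by exhibiting a maximal clique and a maximal stable set of $L(G^*)$ that are disjoint. Recall that the maximal stable sets of $L(G^*)$ are exactly the maximal matchings of $G^*$, and, since $G^*$ is triangle-free, the maximal cliques of $L(G^*)$ are exactly the stars $E(v) = \{e \in E(G^*) \mid v \in e\}$ at vertices $v$ of degree at least two. I would fix a maximal matching $M$ of $G^*$; because $G^*$ has $9$ vertices and $9$ is odd, $M$ leaves at least one vertex $v$ uncovered, and by maximality of $M$ every neighbor of $v$ is matched. The star $E(v)$ then contains no edge of $M$, so the maximal clique $E(v)$ and the maximal stable set $M$ are disjoint, contradicting that $L(G^*)$ is CIS.

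The main obstacle I anticipate lies not in the logical skeleton, which transfers directly from Proposition~\ref{prop:G22}, but in the two facts that must be verified for the specific graph $G^*$: that $G_{14} = \overline{L(G^*)}$ is genuinely equistable (so that Theorem~\ref{thm:triangle} applies), and that the uncovered vertex $v$ can be taken to have degree at least two in $G^*$, so that its star is indeed a \emph{maximal} clique of $L(G^*)$ rather than a single edge contained in the star of a neighbor. The first is supplied by~\cite{MTT}; the second requires only a quick inspection of the degree sequence of $G^*$ (cf.~Fig.~\ref{fig:example}), and in the worst case can be circumvented by choosing $M$ so that some uncovered vertex has degree at least two.
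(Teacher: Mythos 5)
Your proposal is correct and is essentially identical to the paper's own argument: the paper does not even write out a separate proof for $G_{14}$, stating only that it is ``analogous to the proof of Proposition~\ref{prop:G22},'' and your proof is precisely that analogy (contradiction via Theorem~\ref{thm:triangle}, then Theorem~\ref{thm:line-equistable-co-equistable} to get CIS, then a disjoint maximal matching/star pair in the triangle-free root graph). Your extra care about the uncovered vertex having degree at least two is a detail the paper silently skips, and it checks out for $G^*$.
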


{We do not know whether every $\cup$-strongly equistable graph is weakly CIS,
$\cap$-weakly triangle, weakly triangle, normal, or none of these. }
Regarding the remaining non-inclusions, we have:
\begin{itemize}
  \item Not every $\cup$-strongly equistable graph is triangle. For example, the net is not.

  \item Not every $\cup$-strongly equistable graph is quasi CIS (since not every $\cap$-semi-weakly CIS graph is quasi CIS).

  \item Not every $\cup$-strongly equistable graph is perfect (since not every CIS graph is perfect).
\end{itemize}

\subsection{$\cap$-equistable graphs}

The fact that every $\cap$-equistable graph is $\cap$-triangle
follows from Theorem~\ref{thm:triangle}. To see that the inclusion is proper,
let $L$ denote
the graph obtained from the line graph of $K_{5,6}$ (see Fig.~\ref{fig:L}) by gluing a new triangle along every edge,
and consider the graph $L\overline{L}$, the disjoint union of graph $L$ and its complement.

\begin{figure}[h!]
\begin{center}
\includegraphics[width=0.25\textwidth]{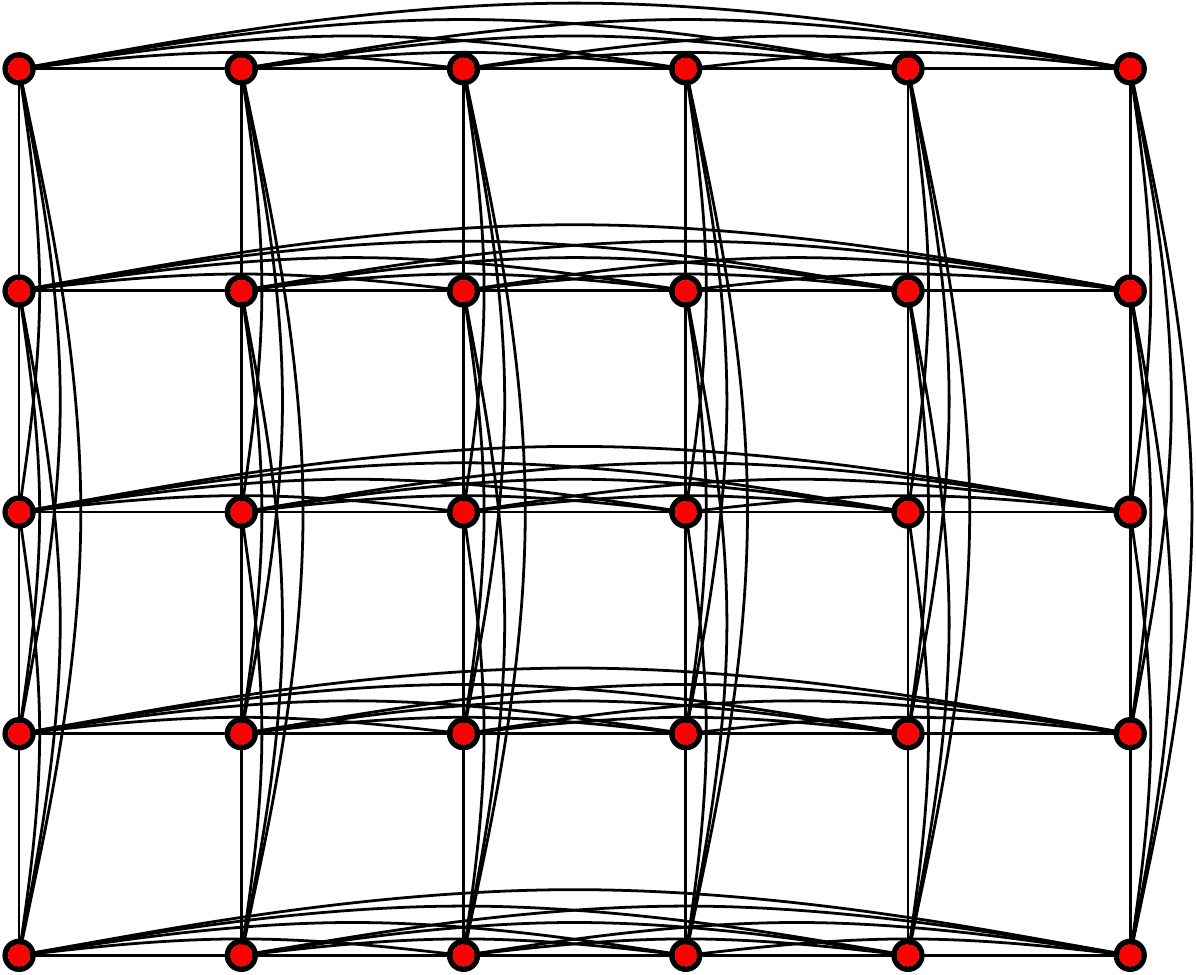}
\caption{The line graph of $K_{5,6}$.}
\label{fig:L}
\end{center}
\end{figure}

\begin{proposition}
Graph $L\overline{L}$ is $\cap$-triangle.
\end{proposition}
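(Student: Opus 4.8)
The plan is to reduce everything to the single claim that both $L$ and $\overline{L}$ are triangle graphs. By definition $L\overline{L} = L + \overline{L}$, and since the complement of a disjoint union is the join of the complements, $\overline{L\overline{L}} = \overline{L} * \overline{\overline{L}} = \overline{L} * L$. Thus, once $L$ and $\overline{L}$ are both known to be triangle, Proposition~\ref{prop:triangle-disjoint-union-join} applied to the disjoint union shows that $L + \overline{L}$ is triangle, and the same proposition applied to the join shows that $\overline{L} * L$ is triangle. Together these two facts say precisely that $L\overline{L}$ is $\cap$-triangle.

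That $L$ is triangle is the easy half. Gluing a new triangle along an edge $uv$ adds a vertex $w$ with $N[w] = \{u,v,w\}$, so $\{u,v,w\}$ is a simplicial clique containing $uv$ as well as the two new edges incident to $w$. Hence every edge of $L$ lies in a simplicial clique, so $L$ is edge simplicial, and by Propositions~\ref{prop:edge-simplicial-semi-weakly-CIS} and~\ref{prop:semi-weakly-CIS-triangle} it is triangle.

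The heart of the argument is to show that $\overline{L}$ is triangle, that is, that $L$ is co-triangle. I would first translate this condition back to $L$: since maximal stable sets of $\overline{L}$ are maximal cliques of $L$, edges of $\overline{L}$ are non-edges of $L$, and triangles of $\overline{L}$ are independent triples of $L$, the co-triangle property amounts to the statement that for every maximal clique $C$ of $L$ and every non-edge $\{u,v\}$ of $L$ with $u,v\notin C$, some vertex $s\in C$ is non-adjacent in $L$ to both $u$ and $v$. I would then exploit the structure of $L$: its original vertices induce the $5\times 6$ rook's graph $L(K_{5,6})$ (grid cells, adjacent iff sharing a row or a column), while each added vertex has degree two. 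A short analysis shows that the maximal cliques of $L$ are exactly (a) the glued triangles $\{u,v,w\}$, (b) the five rows (of size $6$), and (c) the six columns (of size $5$). For a clique of type (a) the condition holds trivially, since the added vertex $w$ is non-adjacent to everything outside its triangle, so $s=w$ works. For a row- or column-clique $C$, a case check over the possible types of an external vertex (old cell, row-edge vertex, column-edge vertex) shows that each such vertex is adjacent to at most two cells of $C$; hence $u$ and $v$ together block at most four cells of $C$, and since $|C|\in\{5,6\}$ is strictly larger than $4$, some cell of $C$ is non-adjacent to both $u$ and $v$. This is exactly where the choice of parameters $5$ and $6$ is used.

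I expect the bound ``at most two cells'' in the row/column case to be the main obstacle: it requires a careful but elementary enumeration of how each flavor of vertex outside $C$ can attach to a fixed row or column. Everything else is the complement bookkeeping of the first paragraph, the edge-simplicial observation of the second, and the trivial inequality $5,6>4$.
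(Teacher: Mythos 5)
Your proposal is correct and follows essentially the same route as the paper: show that $L$ is edge simplicial (hence triangle) via the glued simplicial triangles, show that $L$ is co-triangle by the same case split on maximal cliques (glued triangles handled by the degree-two vertex; rows and columns handled by the observation that every outside vertex has at most two neighbors in such a clique, so $|C|\ge 5 > 4$ leaves an unblocked vertex), and then apply Proposition~\ref{prop:triangle-disjoint-union-join} to the disjoint union and, via complementation, to the join. The only difference is presentational: you spell out the inventory of maximal cliques of $L$ and the complement bookkeeping explicitly, which the paper leaves implicit.
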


\begin{proof}
It is not difficult to see that graph $L$ is edge simplicial, and hence triangle.
Also, $L$ is co-triangle: for every maximal clique $C$ of $L$ and every non-edge $uv$ in $L-C$,
there exists a vertex in $C$ non-adjacent to both $u$ and $v$.
Indeed, either $|C| = 3$ (a triangle), in which case the vertex of degree $2$ in $C$ is non-adjacent to both $u$ and $v$,
or $|C| \in \{5,6\}$, in which case every vertex not in $C$ has at most $2$ neighbors in $C$,
which implies that there exists a vertex in $C$ non-adjacent to both $u$ and $v$.

Since both $L$ and $\overline{L}$ are triangle, so is their disjoint union $L\overline{L}$ (see Proposition~\ref{prop:triangle-disjoint-union-join}).
Similarly, Proposition~\ref{prop:triangle-disjoint-union-join} implies that the co-triangle graphs are closed under disjoint union.
Therefore, $L\overline{L}$ is also co-triangle, and consequently $\cap$-triangle.
\qed
\end{proof}

\noindent It remains to show that the graph $L\overline{L}$
is not $\cap$-equistable. Instead, we show the following stronger statement (that will also be useful later).

\begin{proposition}\label{prop:LLbar-not-cup-equistable}
Graph $L\overline{L}$ is not $\cup$-equistable.
\end{proposition}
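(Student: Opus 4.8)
The plan is to exploit the fact that $\cup$-equistability is a disjunction: since the complement of the disjoint union $L\overline{L}=L+\overline{L}$ is the join $L*\overline{L}$, the graph $L\overline{L}$ is $\cup$-equistable if and only if at least one of $L+\overline{L}$ and $L*\overline{L}$ is equistable. So I would prove the proposition by showing that \emph{neither} of these two graphs is equistable. The common engine for both cases is a rigidity property of $\overline{L}$ alone, which I would isolate as a lemma: \emph{there is no real-valued function $\psi$ on $V(\overline{L})$ taking one and the same nonzero value on every maximal stable set of $\overline{L}$.} Note that I would not need the (true) fact that $L$ itself is equistable; the whole obstruction lives in $\overline{L}$.

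To prove this lemma I would use the structure of the maximal cliques of $L$, which are exactly the maximal stable sets of $\overline{L}$. Writing the $30$ vertices of $L$ coming from $E(K_{5,6})$ as $e_{ij}$ with $i\in[5]$, $j\in[6]$, the glued triangle-vertices each have degree $2$, so the only maximal cliques of $L$ are the glued triangles together with the stars inherited from $L(K_{5,6})$: the $5$ stars $R_i=\{e_{ij}:j\in[6]\}$ (of size $6$) and the $6$ stars $C_j=\{e_{ij}:i\in[5]\}$ (of size $5$). The key point is that $\{R_1,\dots,R_5\}$ and $\{C_1,\dots,C_6\}$ are two partitions of the same $30$-element set $\{e_{ij}\}$. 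Hence if $\psi$ is constant equal to $c$ on all maximal stable sets of $\overline{L}$, double counting gives $\sum_{i,j}\psi(e_{ij})=\sum_{i}\psi(R_i)=5c$ and also $\sum_{i,j}\psi(e_{ij})=\sum_{j}\psi(C_j)=6c$, so $5c=6c$ and $c=0$. This establishes the lemma.

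With the lemma in hand I would finish both cases. For the join $L*\overline{L}$, relation~\eqref{eq2} shows that every maximal stable set lies entirely in $L$ or entirely in $\overline{L}$, so any equistable weight function $\varphi$ restricts to a function on $V(\overline{L})$ equal to $1$ on every maximal stable set of $\overline{L}$; this contradicts the lemma with $c=1\neq 0$. For the disjoint union $L+\overline{L}$, relation~\eqref{eq1} shows the maximal stable sets are the sets $S\cup S'$ with $S\in{\cal S}(L)$ and $S'\in{\cal S}(\overline{L})$; holding $S'$ fixed and varying $S$ (and vice versa) forces $\varphi$ to be constant, say $\alpha$, on ${\cal S}(L)$ and constant $1-\alpha$ on ${\cal S}(\overline{L})$. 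Applying the lemma to $\overline{L}$ then forces $1-\alpha=0$, i.e.\ $\alpha=1$; but then any $S\in{\cal S}(L)$, viewed inside $L+\overline{L}$, is a \emph{non-maximal} stable set (it can be enlarged by any vertex of $\overline{L}$) of weight $\varphi(S)=\alpha=1$, contradicting equistability.

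I expect the main obstacle to be the disjoint-union case rather than the join: there the maximal-stable-set equations only pin $\varphi$ down to being constant on ${\cal S}(\overline{L})$ (not necessarily equal to $1$), so the lemma cannot be applied verbatim and one must separately rule out the degenerate value $\alpha=1$ using the non-maximality of an $L$-maximal stable set inside the union. The routine but necessary verifications are that the inherited stars really remain maximal cliques of $L$ after the triangle-gluing (which holds because each new vertex has degree $2$) and that $\{R_i\}$ and $\{C_j\}$ partition the same vertex set; these underpin the $5=6$ contradiction that is the crux of the argument, and they make clear why the sizes $5\neq 6$ were chosen.
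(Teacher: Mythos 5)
Your proof is correct, and its engine is the same as the paper's: both halves come down to the $5$-versus-$6$ double count over the two star-partitions $\{R_i\}$ and $\{C_j\}$ of the $30$ vertices of $L(K_{5,6})$ inside $L$, applied once to maximal cliques (your join case is essentially identical to the paper's co-equistable argument) and once to maximal stable sets. The one genuine divergence is the endgame of the disjoint-union case. The paper fixes $S\in{\cal S}(L)$, derives $\varphi(S)=1$ and that the $30$ relevant vertices of $\overline{L}$ carry total weight $0$, then invokes \emph{nonnegativity} of the weights to force $\varphi(x)=0$ on each of them, and finally exhibits a non-stable set $S\cup e$ (with $e$ an edge of $\overline{L}$) of weight $1$. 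You instead derive that $\varphi$ is constant $\alpha$ on ${\cal S}(L)$ and $1-\alpha$ on ${\cal S}(\overline{L})$, use your lemma to force $\alpha=1$, and exhibit the non-maximal stable set $S$ itself as the weight-$1$ violator. Your version buys two small things: it never uses nonnegativity of $\varphi$ (only its values on stable sets), and isolating the double count as a rigidity lemma about $\overline{L}$ lets you reuse it verbatim in both cases; the paper's version is more direct but repeats the counting in each half. Both rely on the same ``if and only if'' in the definition of equistability, and both correctly note that only the row and column stars (not the glued triangles) are needed for the count.
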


\begin{proof}
We will show that $L\overline{L}$ is neither equistable nor co-equistable.
First we show that
$L\overline{L}$ is not co-equistable.
Suppose for a contradiction that
there exists a weight function $\varphi:V(L\overline{L})\to\R_+$
such that a subset $C\subseteq V(L\overline{L})$ is a maximal clique
of $L\overline{L}$ if and only if $\varphi(C)=1$.
The component $L$ of $L\overline{L}$ contains
$6$ pairwise disjoint maximal cliques $C_1,\ldots, C_6$ of order $5$,
and
$5$ pairwise disjoint maximal cliques $C^1,\ldots, C^5$ of order $6$.
Moreover, $\cup_{i = 1}^6C_i = \cup_{j = 1}^5C^j = V(L)$.
Clearly, these are also maximal cliques in $L\overline{L}$.
We now obtain a contradicting equality
$6 = \sum_{i = 1}^6\varphi(C_i) = \varphi(V(L)) = \sum_{j = 1}^5\varphi(C^j) = 5$.

The proof that $L\overline{L}$ is not equistable is only slightly more involved.
Suppose for a contradiction that
there exists a weight function $\varphi:V(L\overline{L})\to\R_+$
such that
\begin{equation}\label{eq3}
\text{ a subset $S\subseteq V(L\overline{L})$ is a maximal stable set
of $L\overline{L}$ if and only if $\varphi(S)=1$.}
\end{equation}
The component $\overline{L}$ of $L\overline{L}$ contains
$6$ pairwise disjoint maximal stable sets $S_1,\ldots, S_6$ of order $5$,
and
$5$ pairwise disjoint maximal stable sets $S^1,\ldots, S^5$ of order $6$.
Moreover, $\cup_{i = 1}^6S_i = \cup_{j = 1}^5S^j = V(\overline L)$.
Fix a maximal stable set $S$ of $L$. Then, each $S\cup S_i$ and each $S\cup S^j$ is a maximal stable set
in $L\overline{L}$.
Consequently, we have
$$6 = \sum_{i = 1}^6\varphi(S\cup S_i) = 6\varphi(S)+\varphi(V(L))\,,$$
and
$$5 = \sum_{j = 1}^5\varphi(S\cup S^j) = 5\varphi(S)+\varphi(V(L))\,.$$
This implies $\varphi(S) = 1$ and consequently
$\varphi(x) = 0$ for all $x\in V(\overline L)$. Therefore $\varphi(S\cup e) =1$ where $e$ is an edge in $\overline L$,
a contradiction with~\eqref{eq3}.\qed
\end{proof}

\noindent The fact that every $\cap$-equistable graph is equistable
follows from definition; the inclusion is proper (e.g., the net is co-edge-simplicial and hence co-equistable but not triangle, hence also not equistable).
{We do not know whether every $\cap$-equistable graph is $\cup$-semi-weakly CIS,
$\cup$-strongly equistable, weakly CIS, normal, or none of these. }
Regarding the remaining non-inclusions, we have:
\begin{itemize}
  \item Not every $\cap$-equistable graph is quasi CIS (since not every $\cap$-semi-weakly CIS graph is quasi CIS).

  \item Not every $\cap$-equistable graph is perfect (since not every CIS graph is perfect).
\end{itemize}

\subsection{Equistable graphs}

The fact that every equistable graph is triangle is the statement of Theorem~\ref{thm:triangle}; the inclusion is proper
(e.g., the $9$-vertex graph $\textrm{Cir}_9$ is triangle but not equistable, see~\cite{MM-2011}).
The fact that every equistable graph is $\cup$-equistable
follows from definition; the inclusion is proper
(e.g., the net is co-semi-weakly CIS and hence co-equistable and $\cup$-equistable, but it is not triangle, hence also not equistable).
\\
{We do not know whether every equistable graph is weakly CIS,
$\cap$-weakly triangle, normal, or none of these. }
Regarding the remaining non-inclusions, we have:
\begin{itemize}
\item Not every equistable graph is $\cup$-strongly equistable.
{A separating graph is given, for example, by $G_{14}$; this graph is equistable but not strongly equistable, and by Proposition~\ref{prop:G14}
also not co-triangle (hence, in particular, not co-strongly equistable).}

\item Not every equistable graph is $\cap$-triangle (since not every semi-weakly CIS graph is $\cap$-triangle).

  \item Not every equistable graph is quasi CIS (since not every $\cap$-semi-weakly CIS graph is quasi CIS).

  \item Not every equistable graph is perfect (since not every CIS graph is perfect).
\end{itemize}

\subsection{$\cup$-equistable graphs}

The fact that every $\cup$-equistable graph is $\cup$-triangle
follows from Theorem~\ref{thm:triangle}.
The inclusion is proper; a separating graph can be obtained by the $9$-vertex circle graph $\textrm{Cir}_9$ is $\cup$-triangle but not $\cup$-equistable.
Recall that $V(\textrm{Cir}_9)=\{1,\ldots,9\}$ and ${\cal S}(\textrm{Cir}_9) = \{\{1,2,3\},\{4,5,6\},\{7,8,9\},\{1,4,7\},\{3,6,9\}\}$.

\begin{proposition}\label{prop:Cir9-not-cup-equistable}
Graph $\textrm{Cir}_9$ is triangle but not $\cup$-equistable.
\end{proposition}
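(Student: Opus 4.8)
The statement has three ingredients, two of which are already on record: that $\textrm{Cir}_9$ is triangle (and hence $\cup$-triangle) and that it is not equistable are both established in~\cite{MM-2011}. Since being $\cup$-equistable means being equistable \emph{or} co-equistable, the entire task reduces to proving that $\textrm{Cir}_9$ is \emph{not} co-equistable; combined with non-equistability this yields the claim. So the plan is to rule out co-equistability, working with cliques in the complement.

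By definition $\textrm{Cir}_9$ is co-equistable exactly when $\overline{\textrm{Cir}_9}$ is equistable, i.e.\ when there is a weight function $\varphi:V(\textrm{Cir}_9)\to\R_+$ assigning total weight $1$ to every maximal clique of $\textrm{Cir}_9$ and weight different from $1$ to every other nonempty vertex subset. I would first read off the maximal cliques of $\textrm{Cir}_9$ from the given list of maximal stable sets. The structural fact I expect to extract is that the vertex set partitions into the three stable triples $A=\{1,2,3\}$, $B=\{4,5,6\}$, $C=\{7,8,9\}$ (each a maximal stable set), and that every maximal clique is a triangle meeting each of $A$, $B$, $C$ in exactly one vertex.

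Assuming such a $\varphi$ exists, the next step is to show $\varphi$ is constant on each of $A$, $B$, $C$. This comes from comparing maximal cliques that differ in a single part: three maximal cliques sharing their $B$- and $C$-vertices but using the three distinct vertices of $A$ force the weights on $A$ to coincide, and symmetrically within $B$ and within $C$. Writing $\alpha$, $\beta$, $\gamma$ for the common values on $A$, $B$, $C$, every maximal-clique equation then collapses to $\alpha+\beta+\gamma=1$.

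The contradiction is then immediate: any transversal of the partition $\{A,B,C\}$ receives weight $\alpha+\beta+\gamma=1$, yet the transversal $\{1,4,7\}$ is a maximal stable set of $\textrm{Cir}_9$ and in particular not even a clique. Thus $\varphi$ gives weight $1$ to a set that is not a maximal stable set of $\overline{\textrm{Cir}_9}$, contradicting equistability of $\overline{\textrm{Cir}_9}$. Hence $\textrm{Cir}_9$ is not co-equistable, and therefore not $\cup$-equistable. The only delicate point I anticipate is the bookkeeping in the first step---listing the maximal cliques correctly and checking that the required single-part comparisons are all present---whereas the final transversal argument needs no computation.
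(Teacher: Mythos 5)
Your proposal is correct, and its top-level structure matches the paper's: both cite the literature for the facts that $\textrm{Cir}_9$ is triangle and not equistable, and reduce the whole claim to showing that $\textrm{Cir}_9$ is not co-equistable. The inner arguments for that step differ, however. The paper exhibits a single signed combination of five maximal cliques: writing $x(C)$ for the characteristic vector of $C$, it checks that $x(\{1,5,9\})+x(\{2,6,7\})+x(\{3,4,8\})-x(\{1,6,8\})-x(\{2,5,7\})$ equals $x(\{3,4,9\})$, where $\{3,4,9\}$ is not a clique, so any equistable weight function $\varphi$ of the complement would give this set weight $1+1+1-1-1=1$, a contradiction. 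Your route instead extracts a symmetry, and the structural facts you flagged as needing verification do check out: every edge of $\textrm{Cir}_9$ lies in a triangle, so all maximal cliques are triangles transversal to the stable partition $A=\{1,2,3\}$, $B=\{4,5,6\}$, $C=\{7,8,9\}$, and the single-part comparisons you need are present, namely $\{1,5,8\},\{2,5,8\},\{3,5,8\}$ (forcing $\varphi$ constant on $A$), $\{2,4,8\},\{2,5,8\},\{2,6,8\}$ (constant on $B$), and $\{2,5,7\},\{2,5,8\},\{2,5,9\}$ (constant on $C$) are all maximal cliques. Hence $\varphi$ assigns weight $\alpha+\beta+\gamma=1$ to the transversal $\{1,4,7\}$, which is a maximal stable set and in particular not a clique---contradiction. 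The two proofs are certificates of the same linear-algebraic kind; indeed yours is equivalent to the combination $x(\{1,5,8\})+x(\{2,4,8\})+x(\{2,5,7\})-2\,x(\{2,5,8\})=x(\{1,4,7\})$. What yours buys is a derivation that is easier to rediscover and to verify (the constancy argument explains \emph{why} a contradiction must arise), at the cost of a slightly larger inventory of maximal cliques; the paper's is shorter once its ad hoc combination is written down. One cosmetic point: the paper attributes the triangle property of $\textrm{Cir}_9$ to~\cite{DTDRM} and only the non-equistability to~\cite{MM-2011}.
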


\begin{proof}
It is known that $\textrm{Cir}_9$ is triangle~\cite{DTDRM}
but not equistable~\cite{MM-2011}.
It can be easily seen that $\textrm{Cir}_9$ is also not co-equistable.
Indeed, let $C_1 = \{1,5,9\}$, $C_2 = \{2,6,7\}$, $C_3 = \{3,4,8\}$, $C_4 = \{1,6,8\}$, $C_5 = \{2,5,7\}$, and
let $x = x(C_1)+x(C_2)+x(C_3)-x(C_4)-x(C_5)$, where $x(C)$ denotes the
characteristic vector of a set $C$. Then, each $C_i$ is a maximal clique in $\textrm{Cir}_9$,
and $x\in\{0,1\}^{V(\textrm{Cir}_9)}$ is the characteristic vector of the set $T =\{3,4,9\}$, which
is not a clique in $\textrm{Cir}_9$.
Suppose that $\varphi:V\to \R^+$ is an equistable weight function of the complement of $\textrm{Cir}_9$.
Writing $\phi = (\varphi(v)\,:\,v\in V(\textrm{Cir}_9))$, it follows that
$$\varphi(T) = \phi^\top x = \phi^{\top}\left(x(C_1)+x(C_2)+x(C_3)-x(C_4)-x(C_5)\right)
= 1+1+1-1-1=1\,,$$ a contradiction.
\qed\end{proof}
\\
{We do not know whether every $\cup$-equistable graph is weakly CIS,
$\cap$-weakly triangle, weakly triangle, normal, or none of these. }
Regarding the remaining non-inclusions, we have:
\begin{itemize}
  \item Not every $\cup$-equistable graph is quasi CIS (since not every $\cap$-semi-weakly CIS graph is quasi CIS).

  \item Not every $\cup$-equistable graph is perfect (since not every CIS graph is perfect).
\end{itemize}

\subsection{$\cap$-triangle graphs}

The fact that every $\cap$-triangle graph is triangle
follows from definition; the inclusion is proper
(e.g., $S_3$ is triangle but not co-triangle).
The fact that every
$\cap$-triangle graph is $\cap$-weakly triangle
follows from the fact that every triangle graph is weakly triangle (cf.~the discussion following Definition~\ref{def:wt}
on p.~\pageref{triangle-weakly-triangle}). The inclusion is proper: a separating graph is, for example, $G_{12}$ (recall the definition of this graph in
Section~\ref{sec:G12}). By Proposition~\ref{prop:G12-weakly-CIS}, $G_{12}$ is weakly CIS, and hence also $\cap$-weakly triangle,
by Proposition~\ref{prop:weakly-CIS-cap-weakly-triangle}.
It remains to show that $G_{12}$ is not $\cap$-triangle. This will follow from the following stronger claim.

\begin{proposition}\label{prop:G12-not-cup-triangle}
Graph $G_{12}$ is not $\cup$-triangle.
\end{proposition}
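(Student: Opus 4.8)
The plan is to disprove both halves directly, by exhibiting explicit counterexamples to the triangle condition in $G_{12}$ and in its complement. Since being $\cup$-triangle means being triangle or co-triangle, it suffices to show that $G_{12}$ is neither triangle nor co-triangle. To do this I first need the adjacency relation of $G_{12}$, which I recover from the stated property that every pair of distinct vertices lies in exactly one member of ${\cal C}\cup{\cal S}$: two vertices are adjacent precisely when they co-occur in some clique of ${\cal C}$, and non-adjacent precisely when they co-occur in some stable set of ${\cal S}$. Reading off the relevant neighborhoods, I will use $N(10)=\{4,5,6,11,12\}$, $N(11)=\{2,6,7,10,12\}$, $N(5)=\{2,4,6,8,10\}$, and $N(7)=\{1,2,4,6,11\}$.

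For the triangle condition I would take the maximal stable set $S=\{5,7,9\}\in{\cal S}$ together with the edge $\{10,11\}$, which is indeed an edge since $10$ and $11$ co-occur in $\{6,10,11\}\in{\cal C}$. Both endpoints lie outside $S$, so the edge lives in $G_{12}-S$, and the triangle condition would demand a common neighbor of $10$ and $11$ inside $S$. But $N(10)\cap N(11)=\{6,12\}$, which is disjoint from $S$; hence $10$ and $11$ have no common neighbor in $S$, and the triangle condition fails. This is exactly the configuration already isolated in the proof that $G_{12}$ is not semi-weakly CIS, where the two maximal cliques $\{6,10,11\}$ and $\{10,11,12\}$ containing the edge both avoid $\{5,7,9\}$.

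Dually, for the co-triangle condition I would take the maximal clique $C=\{6,10,11\}\in{\cal C}$ together with the non-edge $\{5,7\}$, which is a non-edge since $5$ and $7$ co-occur in $\{5,7,9\}\in{\cal S}$. Both $5$ and $7$ lie outside $C$, and the co-triangle property would require a vertex of $C$ non-adjacent to both $5$ and $7$. However $6\in N(5)$, $10\in N(5)$, and $11\in N(7)$, so every vertex of $C$ is adjacent to $5$ or to $7$; equivalently, the common non-neighbors of $5$ and $7$ form the set $\{3,9,12\}$, which misses $C$. Thus the co-triangle condition fails as well, so $G_{12}$ is not co-triangle.

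Putting the two parts together shows that $G_{12}$ is neither triangle nor co-triangle, hence not $\cup$-triangle, as claimed. No step is conceptually hard: the only genuine work is the bookkeeping needed to recover the adjacencies of $G_{12}$ from ${\cal C}$ and ${\cal S}$, and the finite verification that the chosen stable set (respectively clique) avoids the relevant common-neighbor (respectively common-non-neighbor) set. The main thing to get right is the choice of witnesses, and here the edge $\{10,11\}$ paired with the stable set $\{5,7,9\}$, and the non-edge $\{5,7\}$ paired with the clique $\{6,10,11\}$, are precisely the obstructions already used to show that $G_{12}$ is neither semi-weakly CIS nor co-semi-weakly CIS, so they can be reused verbatim.
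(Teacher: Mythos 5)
Your proof is correct and takes essentially the same approach as the paper: the triangle half uses the identical witness (maximal stable set $\{5,7,9\}$ with edge $\{10,11\}$), and for the co-triangle half you simply pick a different but equally valid witness pair (clique $\{6,10,11\}$ with non-edge $\{5,7\}$, reused from the non-co-semi-weakly-CIS argument, whereas the paper uses clique $\{4,10,12\}$ with non-edge $\{5,9\}$). Your adjacency computations $N(5)=\{2,4,6,8,10\}$, $N(7)=\{1,2,4,6,11\}$, $N(10)=\{4,5,6,11,12\}$, $N(11)=\{2,6,7,10,12\}$ all check out against the clique family ${\cal C}$, so both counterexamples are sound.
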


\begin{proof}
Consider the maximal stable set $S = \{5,7,9\}$ and the edge $e = \{10,11\}$.
The only maximal cliques containing vertices $10$ and $11$ are $\{6,10,11\}$ and $\{10,11,12\}$,
therefore the endpoints of $e$ do not have a common neighbor in $S$, which shows that $G_{12}$ is not triangle.

Similarly, consider the maximal clique $C = \{4,10,12\}$ and the non-edge $\overline{e} = \{5,9\}$.
The only maximal stable sets containing $5$ and $9$ are $\{1,5,9,11\}$ and $\{5,7,9\}$,
showing that $5$ and $9$ do not have a common non-neighbor in $C$. Thus, $G_{12}$ is also not co-triangle, which completes the proof.\qed
\end{proof}
\\
We do not know whether every $\cap$-triangle graph is weakly CIS, or whether
every $\cap$-triangle graph is normal.
Regarding the remaining non-inclusions, we have:
\begin{itemize}
  \item Not every $\cap$-triangle graph is quasi CIS (since not every $\cap$-semi-weakly CIS graph is quasi CIS).

  \item Not every $\cap$-triangle graph is perfect (since not every CIS graph is perfect).
\end{itemize}

\subsection{Triangle graphs}\label{sec:triangle}

The fact that every triangle graph is $\cup$-triangle
follows directly from the definition; the inclusion is proper
(e.g., the net is co-edge-simplicial and hence co-triangle but not triangle).
The fact that every triangle graph is weakly triangle follows immediately from the definition
(cf.~the discussion following Definition~\ref{def:wt} on p.~\pageref{triangle-weakly-triangle});
the inclusion is proper (e.g., graph $G_{12}$ is weakly CIS (see Proposition~\ref{prop:G12-weakly-CIS}), hence also
weakly triangle, but it is not triangle, see Proposition~\ref{prop:G12-not-cup-triangle}).
\\
We do not know whether every triangle graph is normal.
Regarding the remaining non-inclusions, we have:
\begin{itemize}
  \item Not every triangle graph is $\cap$-weakly triangle. For example, $\textrm{Cir}_9$ is not (this follows from Proposition~\ref{prop:Cir9-not-co-weakly-triangle} below.

  \item Not every triangle graph is quasi CIS (since not every $\cap$-semi-weakly CIS graph is quasi CIS).
  \item Not every triangle graph is perfect (since not every CIS graph is perfect).
\end{itemize}

\begin{proposition}\label{prop:Cir9-not-co-weakly-triangle}
Graph $\textrm{Cir}_9$ is not co-weakly triangle.
\end{proposition}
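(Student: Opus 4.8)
The plan is to pass to the complement and reduce everything to the explicitly given family $\mathcal{S}(\textrm{Cir}_9)$. Write $H=\overline{\textrm{Cir}_9}$; the claim to establish is that $H$ is not weakly triangle. Two vertices are non-adjacent in $\textrm{Cir}_9$ exactly when they lie in a common member of $\mathcal{S}(\textrm{Cir}_9)$, so I would first read off, for each vertex, its non-neighbors in $\textrm{Cir}_9$ directly from the five listed maximal stable sets; for instance $2$ occurs only in $\{1,2,3\}$, so its only non-neighbors are $1,3$, while $5$ occurs only in $\{4,5,6\}$, so its only non-neighbors are $4,6$. The maximal stable sets of $H$ are precisely the maximal cliques of $\textrm{Cir}_9$, and under complementation ``adjacent in $H$'' / ``common neighbor in $H$'' translate into ``non-adjacent in $\textrm{Cir}_9$'' / ``common non-neighbor in $\textrm{Cir}_9$''.

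Next I would identify a single edge of $\textrm{Cir}_9$ as a choke point. Suppose $H$ were weakly triangle, witnessed by a non-edge covering family $\mathcal{S}$ of maximal cliques of $\textrm{Cir}_9$ satisfying the triangle property. The pair $\{2,5\}$ is an edge of $\textrm{Cir}_9$, hence a non-edge of $H$, so the non-edge covering requirement forces $\mathcal{S}$ to contain some maximal clique $C$ of $\textrm{Cir}_9$ with $2,5\in C$. Since the common neighbors of $2$ and $5$ in $\textrm{Cir}_9$ are exactly $\{7,8,9\}$, and these three vertices are pairwise non-adjacent (they form the listed stable set $\{7,8,9\}$), the only maximal cliques containing $\{2,5\}$ are $\{2,5,7\}$, $\{2,5,8\}$, and $\{2,5,9\}$; thus $C$ is one of these three.

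The heart of the argument is then to rule out all three candidates by exhibiting, for each, a pair of vertices non-adjacent in $\textrm{Cir}_9$, disjoint from $C$, with no common non-neighbor inside $C$ (a violation of the triangle property). For $C=\{2,5,8\}$, and likewise for $C=\{2,5,9\}$, I would take the non-edge $\{1,4\}$: from the non-neighbor lists, $2$ is non-adjacent to $1$ but adjacent to $4$, $5$ is non-adjacent to $4$ but adjacent to $1$, and $8$ (resp.\ $9$) is adjacent to both, so no vertex of $C$ is a common non-neighbor of $1$ and $4$. For $C=\{2,5,7\}$ I would instead use the non-edge $\{3,6\}$ and check analogously that none of $2,5,7$ is non-adjacent to both $3$ and $6$. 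Since every maximal clique covering the edge $\{2,5\}$ violates the triangle property, no admissible family $\mathcal{S}$ can cover this non-edge of $H$, contradicting the non-edge covering requirement; hence $H=\overline{\textrm{Cir}_9}$ is not weakly triangle, i.e.\ $\textrm{Cir}_9$ is not co-weakly triangle.

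The only real labor is the routine but error-prone bookkeeping of adjacencies and of the three clique verifications. The main obstacle is conceptual rather than computational: one must spot that $\{2,5\}$ (an edge of the ``central'' triangle $\{2,5,8\}$ formed by the three vertices $2,5,8$ common to the symmetry orbits) is forced to be covered, yet each of its only three covering cliques fails the triangle condition. Once the right choke-point edge is located, the contradiction is immediate, so I would not need to enumerate the full maximal-clique structure of $\textrm{Cir}_9$ — only the cliques through $\{2,5\}$ and a couple of non-neighbor lists.
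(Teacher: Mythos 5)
Your proposal is correct and follows essentially the same route as the paper: both arguments use the edge $\{2,5\}$ as the choke point and show that any maximal clique of $\textrm{Cir}_9$ covering it must violate the (complemented) triangle property, so no edge-covering family of maximal cliques can work. The only difference is cosmetic: you enumerate the three cliques $\{2,5,7\},\{2,5,8\},\{2,5,9\}$ and exhibit violating non-adjacent pairs $\{1,4\}$ and $\{3,6\}$, while the paper treats an arbitrary clique $C\supseteq\{2,5\}$ at once, noting $C$ misses one of $7,9$ (WLOG $7$) and also $4$, and that the pair $\{4,7\}$ has no common non-neighbor in $C$.
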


\begin{proof}
Let ${\cal C}$ be an edge-covering collection of maximal cliques of $\textrm{Cir}_9$, and let
$C\in {\cal C}$ be a clique in ${\cal C}$ such that $\{2,5\}\subseteq C$.
Since $7$ and $9$ are non-adjacent, we have $\{7,9\}\nsubseteq C$, and
we may assume without loss of generality that $7\not\in C$.
Since $4$ is non-adjacent to $5$, we also have $4\not\in C$. Therefore, $47$ is a non-edge in $\textrm{Cir}_9-C$.
It can be seen that $4$ and $7$ do not have a common neighbor in $C$. Since the choice of ${\cal C}$ was arbitrary,
this implies that $\textrm{Cir}_9$ is not co-weakly triangle.\qed
\end{proof}

\subsection{$\cup$-triangle graphs}

The fact that every $\cup$-triangle graph is $\cup$-weakly triangle
follows from the fact that every triangle graph is weakly triangle; the inclusion is proper
(e.g., graph $G_{12}$ is weakly triangle, but not $\cup$-triangle, see Proposition~\ref{prop:G12-not-cup-triangle}).
\\
We do not know whether every $\cup$-triangle graph is normal.
Regarding the remaining non-inclusions, we have:
\begin{itemize}
  \item Not every $\cup$-triangle graph is weakly triangle. For example, the complement $\textrm{Cir}_9$ is not
  (it follows from Proposition~\ref{prop:Cir9-not-co-weakly-triangle} that the complement of $\textrm{Cir}_9$ is
  not weakly triangle; however it is co-triangle since $\textrm{Cir}_9$ is triangle).
  \item Not every $\cup$-triangle graph is quasi CIS (since not every $\cap$-semi-weakly CIS graph is quasi CIS).
  \item Not every $\cup$-triangle graph is perfect (since not every CIS graph is perfect).
\end{itemize}

\subsection{Weakly CIS graphs}

The fact that every weakly CIS graph is $\cap$-weakly triangle
is the statement of Proposition~\ref{prop:weakly-CIS-cap-weakly-triangle};
we do not know whether the inclusion is proper.
The fact that every weakly CIS graph is normal
is the statement of Proposition~\ref{prop:weakly-CIS-normal}; the inclusion is proper (e.g., $P_4$ is normal but not weakly CIS, as can be easily verified).
\\
Regarding the remaining non-inclusions, we have:
\begin{itemize}
  \item Not every weakly CIS graph is quasi CIS (since not every $\cap$-semi-weakly CIS graph is quasi CIS).
  \item Not every weakly CIS graph is perfect (since not every CIS graph is perfect).
\end{itemize}

\subsection{$\cap$-weakly triangle graphs}

The fact that every $\cap$-weakly triangle graph is weakly triangle follows from definition; the inclusion is proper
(e.g., $\textrm{Cir}_9$ is weakly triangle but not co-weakly triangle).
\\
We do not know whether every $\cap$-weakly triangle graph is weakly CIS, or whether
every $\cap$-weakly triangle graph is normal.
Regarding the remaining non-inclusions, we have:
\begin{itemize}
  \item Not every $\cap$-weakly triangle graph is quasi CIS (since not every $\cap$-semi-weakly CIS graph is quasi CIS).

  \item Not every $\cap$-weakly triangle graph is perfect (since not every CIS graph is perfect).
\end{itemize}

\subsection{Weakly triangle graphs}

The fact that every weakly triangle graph is $\cup$-weakly triangle follows from definition; the inclusion is proper
(e.g., the complement of $\textrm{Cir}_9$ is co-triangle and hence co-weakly triangle but not weakly triangle).
\\
We do not know whether every weakly triangle graph is normal.
Regarding the remaining non-inclusions, we have:
\begin{itemize}
  \item Not every weakly triangle graph is quasi CIS (since not every $\cap$-semi-weakly CIS graph is quasi CIS).

  \item Not every weakly triangle graph is perfect (since not every CIS graph is perfect).
\end{itemize}

\subsection{$\cup$-weakly triangle graphs}

The above examples imply that the class of $\cup$-weakly triangle graphs is not contained in any of the other graph classes considered in Fig.~\ref{fig:Hasse},
except possibly in the class of normal graphs.

\subsection{Quasi CIS graphs}

The fact that every quasi CIS graph is normal follows from the following facts:
(1) by definition, every quasi CIS graph is either CIS or almost CIS, (2)
every CIS graph is normal (this is a consequence of Propositions~\ref{prop:CIS-sw-CIS-w-CIS}
and~\ref{prop:weakly-CIS-normal}), and (3) every almost CIS graph is normal (Corollary~\ref{cor:almost-CIS-normal}).
The inclusion is proper (e.g., ${\it FK}$ is normal but not quasi CIS).
\\
Regarding the remaining non-inclusions, we have:
\begin{itemize}
  \item Not every quasi CIS graph is $\cup$-weakly triangle (since not every almost CIS graph is $\cup$-weakly triangle).
  \item Not every quasi CIS graph is perfect (since not every CIS graph is perfect).
\end{itemize}

\subsection{Perfect graphs}

The fact that every perfect graph is normal is the statement of Proposition~\ref{prop:perfect-normal};
the inclusion is proper (e.g., ${\it C_9}$ is normal~\cite{DeSimoneKoerner99} but not perfect).
\\
Regarding the remaining non-inclusions, we have:
\begin{itemize}
  \item Not every perfect graph is $\cup$-weakly triangle (since not every almost CIS graph is $\cup$-weakly triangle).
\end{itemize}

\subsection{Normal graphs}

Normal graphs are not contained in any of the other graph classes considered in Fig.~\ref{fig:Hasse}.
To see this, it is sufficient to observe that not every normal graph is weakly CIS, quasi CIS, or perfect (as justified above), and
that not every normal graph is $\cup$-weakly triangle (since not every almost CIS graph is $\cup$-weakly triangle).

\subsection{Summary of inclusion relations and separating examples for some of the classes}

We conclude this section by summarizing in Table~\ref{fig:table}
below the known inclusion relations between the following $17$ self-complementary graph properties:
CIS, almost CIS, quasi CIS, split,
$\cap$-edge simplicial, $\cup$-edge simplicial,
$\cap$-equistable, $\cup$-equistable,
$\cap$-strongly equistable, $\cup$-strongly equistable,
$\cap$-triangle, $\cup$-triangle,
$\cap$-weakly triangle, $\cup$-weakly triangle,
$\cap$-semi-weakly CIS, $\cup$-semi-weakly CIS,
and weakly CIS graphs.
Symbol $\subseteq$ in row $X$, column $Y$ of Table~\ref{fig:table} means that $X\subseteq Y$.
If $X\nsubseteq Y$, then the element in row $X$, column $Y$ is an {\it $(X,Y)$-separating graph}, that is, a graph belonging to $X\setminus Y$.
The abbreviations for classes are as follows:
es (edge simplicial), gp (general partition), seq (strongly equistable), eq (equistable),
aCIS (almost CIS), wCIS (weakly CIS), $\triangle$ (triangle), w$\triangle$ (weakly triangle).

\begin{table}[h!]
\centering
{
\footnotesize
\renewcommand{\arraystretch}{1.2}
\tabcolsep=0.1cm
%
%
\begin{tabular}{|c||c|c|c|c|c|c|c|c|c|c|c|c|c|c|c|c|c|}
  \hline
  $X \backslash Y$ &  aCIS& $\cap$-es & split & CIS & qCIS & $\cap$-swCIS & wCIS & $\cap$-seq & $\cap$-eq &
  $\cap$-$\triangle$ & $\cap$-\textrm{w}$\triangle$ & $\cup$-\textrm{es} & $\cup$-swCIS & $\cup$-\textrm{seq} & $\cup$-\textrm{eq} & $\cup$-$\triangle$ & $\cup$-\textrm{w}$\triangle$\\
  \hline
  \hline
aCIS & = & $P_4$ & $\subseteq$ & $P_4$ & $\subseteq$ & $P_4$ & $P_4$ & $P_4$ & $P_4$ & $P_4$ & $P_4$ &
$P_4$ & $P_4$ & $P_4$ & $P_4$ & $P_4$ & $P_4$ \\\hline

$\cap$-\textrm{es} & $K_1$ & $=$ & $\subseteq$ & $F$ & $\subseteq$ & $\subseteq$ &
$\subseteq$ & $\subseteq$ & $\subseteq$ & $\subseteq$ & $\subseteq$ & $\subseteq$ &
$\subseteq$ & $\subseteq$ & $\subseteq$ & $\subseteq$ & $\subseteq$ \\\hline

split & $K_1$ & $P_4$ & $=$ & $P_4$ & $\subseteq$ & $P_4$
& $P_4$ & $P_4$ & $P_4$ & $P_4$ & $P_4$ &
$P_4$ & $P_4$ & $P_4$ & $P_4$ & $P_4$ & $P_4$ \\\hline

CIS & $K_1$ & $C_4$ & $C_4$ & $=$ & $\subseteq$ & $\subseteq$ &
$\subseteq$ & $\subseteq$ & $\subseteq$ & $\subseteq$ & $\subseteq$ &
${\it CK}$
&
$\subseteq$ & $\subseteq$ & $\subseteq$ & $\subseteq$ & $\subseteq$ \\\hline

qCIS & $K_1$ & $C_4$ & $C_4$ & $P_4$ & $=$ & $P_4$ &
 $P_4$ & $P_4$ & $P_4$ & $P_4$ & $P_4$ & $P_4$  &
$P_4$ & $P_4$ & $P_4$ & $P_4$ & $P_4$ \\\hline

$\cap$-swCIS &
$K_1$ & $C_4$ & $C_4$ & $F$ &
${\it FK}$
 & $=$ & $\subseteq$ & $\subseteq$ & $\subseteq$ & $\subseteq$ & $\subseteq$ &
 ${\it CK}$ &
$\subseteq$ & $\subseteq$ & $\subseteq$ & $\subseteq$ & $\subseteq$ \\\hline

wCIS &
$K_1$ & $C_4$ & $C_4$ & $F$ &
$G_{12}$ & $G_{12}$ &
$=$ & {$G_{12}$} & {$G_{12}$} & {$G_{12}$}
& $\subseteq$ & $L(K_{3,3})$ & $G_{12}$ & {$G_{12}$} &{$G_{12}$} &
{$G_{12}$} & $\subseteq$\\\hline

$\cap$-seq &
$K_1$ & $C_4$ & $C_4$ & $F$ &
${\it FL}$
& $G_{12}$ & {\bf ?} &
$=$
& $\subseteq$
& $\subseteq$
& $\subseteq$
& $L(K_{3,3})$
& {\bf ?}
& $\subseteq$
& $\subseteq$
& $\subseteq$
& $\subseteq$\\\hline

$\cap$-eq &
$K_1$ & $C_4$ & $C_4$ & $F$ & ${\it FL}$
& $G_{12}$ & {\bf ?}
& {\bf ?} & $=$
& $\subseteq$
& $\subseteq$
& $L(K_{3,3})$
& {\bf ?}
& {{\bf ?}}
& $\subseteq$
& $\subseteq$
& $\subseteq$
\\\hline

$\cap$-$\triangle$ &
$K_1$ &
 $C_4$ &
 $C_4$ &
 $F$ &
 ${\it FL}$
&
$L\overline{L}$&
{\bf ?}
&
$L\overline{L}$&
$L\overline{L}$&
 $=$
& $\subseteq$
& $L(K_{3,3})$
& $L\overline{L}$
& $L\overline{L}$
& $L\overline{L}$
& $\subseteq$
& $\subseteq$
\\\hline

$\cap$-w$\triangle$ &
$K_1$ &
 $C_4$ &
 $C_4$ &
 $F$ &
 ${\it FL}$
&
$L\overline{L}$&
{\bf ?}
& $L\overline{L}$
& $L\overline{L}$
& $G_{12}$
& $=$
& $L(K_{3,3})$
& $L\overline{L}$
& $L\overline{L}$
& $L\overline{L}$
& $G_{12}$
& $\subseteq$
\\\hline

$\cup$-es &
$K_1$ &
$C_4$ &
$C_4$ &
$S_3$ &
${\it SK}$ &
$S_3$ &
$\subseteq$ &
$S_3$ &
$S_3$ &
$S_3$ &
$\subseteq$
& $=$
& $\subseteq$
& $\subseteq$
& $\subseteq$
& $\subseteq$
& $\subseteq$
\\\hline

$\cup$-swCIS &
$K_1$ &
$C_4$ &
$C_4$ &
$S_3$ &
${\it SK}$ &
$S_3$ &
$\subseteq$ &
$S_3$ &
$S_3$ &
$S_3$ &
$\subseteq$
& $L(K_{3,3,})$
& $=$
& $\subseteq$
& $\subseteq$
& $\subseteq$
& $\subseteq$
\\\hline

$\cup$-seq &
$K_1$ &
$C_4$ &
$C_4$ &
$S_3$ &
${\it SK}$ &
$S_3$ &
{\bf ?}
&
$S_3$ &
$S_3$ &
$S_3$ &
{\bf ?}
& $L(K_{3,3})$
& {$G_{22}$}
& $=$
& $\subseteq$
& $\subseteq$
& $\subseteq$
\\\hline

{$\cup$-eq} &
$K_1$ &
$C_4$ &
$C_4$ &
$S_3$ &
${\it SK}$ &
$S_3$ &
{\bf ?}
&
$S_3$ &
$S_3$ &
$S_3$ &
{\bf ?}
& $L(K_{3,3})$
& {$G_{22}$}
& {$G_{14}$}
& $=$
& $\subseteq$
& $\subseteq$
\\\hline

$\cup$-$\triangle$ &
$K_1$ &
$C_4$ &
$C_4$ &
$S_3$ &
${\it SK}$ &
$S_3$ &
$\textrm{Cir}_9$ &
$S_3$ &
$S_3$ &
$S_3$ &
$\textrm{Cir}_9$ &
$L(K_{3,3})$ &
$\textrm{Cir}_9$ &
$\textrm{Cir}_9$ &
$\textrm{Cir}_9$ &
 $=$
& $\subseteq$
\\\hline

$\cup$-w$\triangle$ &
$K_1$ &
$C_4$ &
$C_4$ &
$S_3$ &
${\it SK}$ &
$S_3$ &
$\textrm{Cir}_9$ &
$S_3$ &
$S_3$ &
$S_3$ &
$\textrm{Cir}_9$ &
$L(K_{3,3})$ &
$\textrm{Cir}_9$ &
$\textrm{Cir}_9$ &
$\textrm{Cir}_9$ &
$G_{12}$
&
 $=$
\\\hline
\end{tabular}
}
\caption{Known inclusion relations between $17$ self-complementary graph properties. Symbol $\subseteq$ in row $X$, column $Y$ means that $X\subseteq Y$.
If $X\nsubseteq Y$, the element in row $X$, column $Y$ is a graph belonging to $X\setminus Y$.
A question mark means that it is not known (to us) whether $X\subseteq Y$.}
\label{fig:table}
\end{table}

\section{Line graphs}\label{sec:line}

In this section, we show that a part of the Hasse diagram in Fig.~\ref{fig:Hasse} simplifies greatly if we restrict ourselves to line graphs.
It was shown in~\cite{LM} that for line graphs, the triangle property implies the general partition (equivalently, the
$\cap$-semi-weakly CIS) property.
This implies that for line graphs, the $\cap$-triangle property implies the $\cap$-semi-weakly CIS property.
Here, we strengthen this result by proving that for line graphs,
the $\cap$-triangle property implies the CIS property. We also give a polynomially testable characterization of
CIS line graphs.

\begin{theorem}\label{thm:line-equistable-co-equistable}
For every line graph $G=L(H)$, the following conditions are equivalent:
\begin{enumerate}
\item[$(i)$] $G$ is a CIS graph.
\item[$(ii)$] $G$ is a $\cap$-general partition graph.
\item[$(iii)$] $G$ is a $\cap$-semi-weakly CIS graph.
\item[$(iv)$] $G$ is a $\cap$-strongly equistable graph.
\item[$(v)$] $G$ is a $\cap$-equistable graph.
\item[$(vi)$] $G$ is a $\cap$-triangle graph.
\item [$(vii)$] No subgraph of $H$ is isomorphic to a bull, and for every maximal matching $M$ of $H$ and every vertex $x\in V(H)$ not covered by $M$, the neighborhood of $x$ is contained in some edge of $M$.
\item [$(viii)$] No subgraph of $H$ is isomorphic to a bull, and for every vertex $x\in V(H)$,
   every matching in $H(x)$ with at least two edges misses some neighbor of $x$. Here, $H(x)$ denotes the subgraph of $H$ induced by
 all edges of $H$ incident with a neighbor of $x$ but not with $x$.
\end{enumerate}
In particular, CIS line graphs can be recognized in polynomial time.
\end{theorem}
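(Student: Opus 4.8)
The implications $(i)\Rightarrow(iii)\Leftrightarrow(ii)\Rightarrow(iv)\Rightarrow(v)\Rightarrow(vi)$ are intended to follow almost for free from the results already assembled. Since the CIS property is self-complementary, $(i)$ together with Proposition~\ref{prop:CIS-sw-CIS-w-CIS} gives that both $G$ and $\overline G$ are semi-weakly CIS, i.e.\ $G$ is $\cap$-semi-weakly CIS, which is $(iii)$; Proposition~\ref{prop:semi-weakly-CIS=gpg} identifies this with $(ii)$, and applying Theorems~\ref{thm:gpg-seq}, \ref{thm:Seq-eq} and~\ref{thm:triangle} to both $G$ and $\overline G$ yields $(iv)$, $(v)$ and $(vi)$ in turn. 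The whole content therefore lies in closing the cycle with $(vi)\Rightarrow(i)$, and in the equivalences with the structural conditions $(vii)$ and $(viii)$, which are the ones that make recognition polynomial.

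\noindent Throughout I would use the standard dictionary for $G=L(H)$: the maximal stable sets of $G$ are exactly the inclusion-wise maximal matchings of $H$, while the maximal cliques of $G$ are the maximal stars $E(v)$ (edge sets through a fixed vertex) together with the triangles of $H$. First I would prove $(vi)\Rightarrow(i)$ contrapositively: if $G$ is not CIS, then $G$ is not $\cap$-triangle. Take a maximal clique $K$ disjoint from a maximal matching $M$. If $K=E(x)$ is a star, then $x$ is $M$-uncovered, and for a pair of neighbours $p,q$ of $x$ with $pq\notin M$ (forced to exist by maximality of the clique when $\deg x=2$, and by the matching constraint when $\deg x\ge 3$) the adjacent pair $xp,xq$ has no common neighbour in $M$, so the \emph{triangle} condition fails. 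If $K=\{ab,bc,ca\}$ is a triangle and some vertex, say $c$, is $M$-uncovered, the pair $ca,cb$ again has no common neighbour in $M$ and the triangle condition fails. The remaining case, in which all of $a,b,c$ are matched outside the triangle (to $a',b',c'$, say), is the genuinely two-sided one: here I would instead exhibit a failure of the \emph{co-triangle} condition, using the maximal clique $K$ and the non-adjacent pair $aa',cc'$, every member of $K$ being adjacent to one of them so that no member of $K$ is non-adjacent to both. The \textbf{main obstacle} is exactly this dichotomy: a triangle of $H$ avoided by a maximal matching always corresponds to a bull subgraph of $H$, but whether it forces a triangle or only a co-triangle violation depends on whether the apex of the bull is covered, and one must check that in every case $\cap$-triangle is violated. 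This is also the reason the hypothesis cannot be weakened from $\cap$-triangle to triangle.

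\noindent The same case analysis yields $(i)\Leftrightarrow(vii)$: the triangle cliques of $L(H)$ meet all maximal matchings precisely when $H$ is bull-subgraph-free (a maximal matching avoiding a triangle of $H$ being equivalent to the presence of a bull), and the maximal star cliques meet all maximal matchings precisely when no maximal matching leaves a vertex $x$ uncovered with $N(x)$ spread over more than one matching edge, which is the second clause of $(vii)$. To obtain the locally testable form $(viii)$, I would localize this condition at each vertex $x$ through the auxiliary graph $H(x)$: an $M$-uncovered $x$ has all of $N(x)$ covered by edges of $M$ disjoint from $x$, i.e.\ by a matching of $H(x)$, and conversely any matching of $H(x)$ meeting every neighbour of $x$ extends to a maximal matching of $H$ leaving $x$ uncovered. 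Thus ``$N(x)$ spread over at least two matching edges'' becomes ``$H(x)$ admits a matching with at least two edges missing no neighbour of $x$'', and $(viii)$ is the negation of this over all $x$. Finally, $(viii)$ is checkable in polynomial time: bull-freeness by inspection of all $5$-vertex subsets, and, for each vertex $x$, a single maximum-matching computation in $H(x)$ decides whether $N(x)$ can be entirely met by a matching with at least two edges. This establishes the polynomial-time recognition of CIS line graphs.
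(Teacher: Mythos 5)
Your overall architecture is sound and genuinely different from the paper's: the paper closes the cycle by proving $(vi)\Rightarrow(vii)$ and then $(vii)\Rightarrow(i)$, whereas you close it with a direct contrapositive $(vi)\Rightarrow(i)$, splitting according to whether the maximal clique disjoint from the maximal matching $M$ is a star $E(x)$ or a triangle, and, in the triangle case, according to whether some triangle vertex is $M$-uncovered (a triangle violation) or all three are matched outside (a co-triangle violation via the non-adjacent pair $aa',cc'$). That case analysis is correct and isolates the same phenomenon the paper handles, namely that only the fully matched bull configuration forces one to pass to the complement; the degenerate case $\deg_H x\le 1$ cannot occur for a maximal star clique disjoint from a maximal matching, but you should say so. Your treatment of $(vii)\Leftrightarrow(viii)$ coincides with the paper's. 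Note that the paper gets $(i)\Rightarrow(vii)$ for free by placing $(vii)$ inside the cycle, while your layout obliges you to prove both directions of $(i)\Leftrightarrow(vii)$ by hand; that is extra work, not an error.

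One of those directions, however, rests on a false lemma. Your claim that ``the maximal star cliques meet all maximal matchings precisely when no maximal matching leaves a vertex $x$ uncovered with $N(x)$ spread over more than one matching edge'' fails in one direction: if $\deg_H x=2$ and the two neighbours $u,v$ of $x$ are adjacent, then $E(x)$ is \emph{not} a maximal clique (it sits inside the triangle $\{xu,xv,uv\}$), so a violation of the second clause of $(vii)$ at such an $x$ need not be witnessed by any star clique. Concretely, let $H$ be the triangle $xuv$ with pendant edges $uu'$ and $vv'$: the maximal matching $\{uu',vv'\}$ leaves $x$ uncovered with $N(x)$ split over two matching edges, yet every maximal star clique of $L(H)$ meets every maximal matching. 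The equivalence $(i)\Leftrightarrow(vii)$ survives because in this situation the triangle $\{xu,xv,uv\}$ is disjoint from $M$ (and $H$ contains a bull), but your proof must route this case through the triangle/bull clause rather than the star clause; as written, the conjunction of your two ``precisely when'' statements does not yield $(i)\Rightarrow(vii)$.

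The more serious gap is algorithmic, and it concerns exactly the idea the paper supplies. Deciding whether $H(x)$ has a matching covering all of $N(x)$ is \emph{not} accomplished by ``a single maximum-matching computation'' if that means maximum cardinality: a maximum matching of $H(x)$ need not maximize the number of covered vertices of $N(x)$. For instance, with $N(x)=\{t,u,s\}$ and $E(H(x))=\{tu,\,uw,\,sr\}$ where $w,r\notin N(x)$, both $\{tu,sr\}$ and $\{uw,sr\}$ are maximum matchings, but only the first covers $N(x)$; an algorithm that computes one maximum matching and checks coverage can answer incorrectly. The paper's remedy is to solve a maximum \emph{weight} matching problem on $H(x)$, assigning weight $2$ to edges with both endpoints in $N_H(x)$ and weight $1$ to all other edges, and to test whether the optimum equals $d_H(x)$ (together with the easy degree-$\le 2$ cases, which also justify the ``at least two edges'' proviso). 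Some such reduction is needed before the final claim of polynomial-time recognition is established.
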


\begin{proof}
The chain of implications $(i)\Rightarrow (ii)\Rightarrow (iii) \Rightarrow (iv) \Rightarrow (v) \Rightarrow (vi)$ holds for arbitrary graphs (cf.~Section~\ref{sec:relations}).

We will show that $(vi)$ implies $(vii)$, that $(vii)$ implies $(i)$, and that $(vii)$ and $(viii)$ are equivalent.

First, we show the implication $(vi)\Rightarrow (vii)$.
Suppose that $G = L(H)$ and $G$ and $\overline{G}$ are triangle.
Since $\overline{G}$ is triangle, for every maximal clique $C$ of $G$ and
every pair of non-adjacent vertices $u,v\in V(G)\setminus C$, $u$ and $v$ have a common non-neighbor in $C$. In particular, this implies that
for every triangle $T$ in $H$ and every pair of disjoint edges $e,f\in E(H)\setminus E(T)$, there exists an edge of $T$ disjoint from $e$ and $f$.
If $H$ has a subgraph isomorphic to a bull, then the triangle $T$ of the bull together with the two edges $e$ and $f$ of the bull not in $T$ contradict the above property. Hence, no subgraph of $H$ is isomorphic to a bull. Suppose for a contradiction that there exists a maximal matching $M$ of $H$ and a vertex $x\in V(H)$ not covered by $M$ such that the neighborhood of $x$ is not contained in any edge of $M$. By maximality of $M$, every neighbor of $x$ is covered by an edge of $M$, and since
the neighborhood of $x$ is not contained in any edge of $M$, there exist two distinct edges $e$ and $f$ of $M$ such that $e\cap N_H(x)\neq\emptyset$ and $f\cap N_H(x)\neq\emptyset$. Choose $u\in e\cap N(x)$ and $v\in f\cap N(x)$ arbitrarily. Then $u\neq v$.
The set $M$ forms a maximal stable set in $G$ not containing any endpoint of the edge $\{ux,vx\}$. By the triangle condition, the vertices
$ux$ and $vx$ in $G$ have a common neighbor in $M$. But this is impossible, since, on the one hand, vertex $x$ is not covered by $M$, and, on the other hand,
$uv\not\in M$ since $M$ is a matching. Hence, for every maximal matching $M$ of $H$ and every vertex $x$ not covered by $M$, the neighborhood of $x$ is contained in some edge of $M$.

Next, we show the implication $(vii)\Rightarrow (i)$.
Suppose that no subgraph of $H$ is isomorphic to a bull, and for every maximal matching $M$ of $H$ and every vertex $x\in V(H)$ not covered by $M$, the neighborhood of $x$ is contained in some edge of $M$.
Suppose for a contradiction that $G$ is not CIS, that is, that there exists a maximal stable set $S$ in $G$ and a maximal clique $C$ in $G$ such that $C\cap S = \emptyset$. Then, there exists a maximal matching $M$ in $H$ that misses a set of edges $T$ where $T$ is either a triangle or an inclusion-wise maximal star not contained in any triangle. If $T$ is a triangle, then, by the maximality of $M$, matching $M$ must cover at least two vertices of a triangle. But this is impossible by the bull-freeness of $H$. Hence, $M$ misses a maximal star $T$ not contained in any triangle.
Let $x\in V(H)$ be a center of this star. By the assumption on $H$, the neighborhood of $x$ is contained in some edge of $M$. However, this means that either $x$ is a leaf, in which case $T$ is not maximal (since the unique neighbor of $x$ is covered by $M$), or $x$ is a simplicial vertex of degree $2$. But in this case, $T$ is contained in a triangle, which is a contradiction.

Next, we show the implication $(vii)\Rightarrow (viii)$.
Suppose that condition $(viii)$ fails. Then, there exists a vertex $x\in V(H)$ such that there exists a matching $M$ in $H(x)$ with at least two edges that covers all neighbors of $x$. Then, $M$ is a matching in $H$. Extend $M$ to a maximal matching $M'$. Since $M$ covers all neighbors of $x$ but not $x$ itself, $x$ is also not covered by $M'$. Moreover, since $|M|\ge 2$, we see that $x$ is a vertex such that its neighborhood $N_H(x)$ is not contained in any edge of $M'$. Thus, condition $(vii)$ is violated as well.

Finally, we show the implication $(viii)\Rightarrow (vii)$.
Suppose that $(vii)$ fails, that is, there exists a maximal matching $M$ of $H$ and a vertex $x$ not covered by $M$ such that the neighborhood of $x$ is not contained in any edge of $M$. Since $x$ is not covered by $M$ and $M$ is maximal, $M$ covers all neighbors of $x$.
Let $M':= M\cap E(H(x))$. Then, $M'$ is a matching in $H(x)$ covering all neighbors of $x$.
Since $N_H(x)$ is not contained in any edge of $M$, it follows that $|M'|\ge 2$. Thus, we have a vertex $x$ of $H$ such that
the graph $H(x)$ contains a matching with at least two edges that covers all neighbors of $x$.
Hence, condition  $(viii)$ is violated.

It remains to show that the equivalent conditions can be verified in polynomial time.
We do this by describing a polynomial time algorithm to verify property~$(viii)$.
Given a graph $G$, testing whether $G$ is a line graph and computing a root graph
$H$  of it (if one exists) can be done in linear time~\cite{Roussopoulos}.
Clearly, verifying whether $H$ has a subgraph isomorphic to a bull can be tested in polynomial time.

It remains to verify whether for every vertex $x\in V(H)$, every matching in $H(x)$ with at least two edges misses some neighbor of $x$.
This condition is satisfied vacuously whenever the matching number of $H(x)$ is at most $1$, which is clearly the case if $x$ is of degree at most $1$, and also, by the bull-freeness, if $x$ is a simplicial vertex of degree~$2$.

So suppose that $x$ is a vertex of degree at least $3$ or a non-simplicial vertex of degree $2$. We compute the graph $H(x)$, and solve an instance of the maximum weight matching problem on $H(x)$ where every edge in $H(x)$ connecting two vertices in $N_H(x)$ is assigned weight $2$, and every other edge in $H(x)$ gets weight $1$. It is easy to see that graph $H(x)$ has a matching of total weight $d_H(x)$ if and only if $H(x)$ contains a matching of at least two edges covering all vertices in $N_H(x)$ (the condition that at least two edges are in the matching is without loss of generality, since by assumption on $x$, it is not possible to cover all neighbors of $x$ with a single edge). Equivalently, every matching in $H(x)$ with at least two edges misses some neighbor of $x$
if and only if the maximum weight of a matching in $H(x)$ is of value less than $d_H(x)$.
Since the maximum weight matching problem is solvable in polynomial time (see, e.g.,~\cite{GT91}), and
all the graphs $H(x)$ together with their edge weights can also be computed in polynomial time, the conclusion follows.
\qed
\end{proof}

\medskip

Theorem~\ref{thm:line-equistable-co-equistable} provides an infinite family of CIS line graphs.
In particular, as a special case we obtain all graphs of the form $L(\tilde H)$ where $H$ is a triangle-free graph,
and $\tilde H$ is a graph obtained from $H$ by adding a new private neighbor to every vertex of $H$.
Formally:
    \begin{itemize}
      \item $V(L(\tilde H)) = E(H)\cup \{\{v,v'\}\wt v\in V(H)\}$ and
      \item $E(L(\tilde H)) = \{\{e,f\}\wt e,f\in V(L(\tilde H)) \wedge e\neq f\wedge e\cap f\neq\emptyset\}$.
    \end{itemize}
Theorem~\ref{thm:line-equistable-co-equistable} applies, since condition $(vii)$ can be easily verified to hold for $\tilde H$.

\section{Open questions}\label{sec:open}

Some of the inclusion relations in Fig.~\ref{fig:Hasse} remain undetermined (e.g., those represented by dotted lines).
We restate some of them here explicitly:
\begin{itemize}
 \item Is every $\cap$-triangle (or even $\cap$-weakly triangle) graph weakly CIS?

 \item Is the following weaker form of Orlin's conjecture true? Every equistable graph is weakly CIS.

 \item Is Orlin's conjecture true when restricted to $\cap$-equistable graphs?
 That is, is every $\cap$-equistable graph a $\cap$-general partition graph?

 \item Is every $\cap$-triangle (or even triangle or $\cup$-triangle) graph normal?

 \item Is every $\cap$-weakly triangle (or even weakly triangle or $\cup$-weakly triangle) graph normal?
\end{itemize}

There are also many open questions related to the algorithmic complexity of recognizing graph classes considered in this paper.
If $\Pi$ denotes one of the following properties: threshold, $\cap$-edge simplicial, edge simplicial, $\cup$-edge simplicial, cographs, almost CIS, split, perfect,
then it is known that recognizing graphs in $\Pi$ can be done in polynomial time.
To the best of our knowledge, the computational complexity status of recognizing each of the remaining $19$
classes depicted in Fig.~\ref{fig:Hasse} is open. Recognizing CIS graphs was conjectured to be polynomial by Andrade {et al.}~\cite{ABG06}, and co-NP-complete
  by Zverovich-Zverovich~\cite{ZZ} (who named CIS graphs {\it stable} graphs).
  Recognizing triangle graphs was conjectured to be co-NP-complete
  by Kloks {et al.}~\cite{KLLM}.


\end{document}